
\documentclass[12pt,a4paper]{article}
\usepackage{amsmath,amssymb,amsfonts,amsthm}
\usepackage{float,graphicx,color}

\newtheorem{theorem}{Theorem}[section]
\newtheorem{lemma}[theorem]{Lemma}
\newtheorem{proposition}[theorem]{Proposition}
\newtheorem{corollary}[theorem]{Corollary}

\theoremstyle{definition}

\newtheorem{construction}[theorem]{Construction}
\newtheorem{example}[theorem]{Example}
\newtheorem{remark}[theorem]{Remark}

\oddsidemargin=0.05in 
\evensidemargin=0.05in 
\topmargin=-0.5in
\textwidth=6.2in
\textheight=9.2in

\newcommand{\PGammaL}{\mathop{\mathrm{P}\Gamma\mathrm{L}}}
\newcommand{\PO}{\mathop{\mathrm{PO}}}
\newcommand{\PGL}{\mathop{\mathrm{PGL}}}

\newcommand{\AGL}{\mathop{\mathrm{AGL}}}
\newcommand{\AG}{\mathop{\mathrm{AG}}}
\newcommand{\PSL}{\mathop{\mathrm{PSL}}}

\newcommand{\PSp}{\mathop{\mathrm{PSp}}}
\newcommand{\PSigmaL}{\mathop{\mathrm{P}\Sigma\mathrm{L}}}
\newcommand{\PGammaSp}{\mathop{\mathrm{P}\Gamma\mathrm{Sp}}}
\newcommand{\GF}{\mathop{\mathrm{GF}}}
\DeclareMathOperator{\Cos}{Cos}
\newcommand{\POmega}{\mathop{\mathrm{P}\Omega}}
\DeclareMathOperator{\Wr}{wr}
\newcommand{\norml}{\vartriangleleft}

\newcommand{\F}{{\ensuremath{\mathbb{F}}}}

\DeclareMathOperator{\Aut}{Aut}
\DeclareMathOperator{\Inn}{Inn}
\DeclareMathOperator{\Sym}{Sym}
\newcommand{\la}{\langle}
\newcommand{\ra}{\rangle}
\DeclareMathOperator{\soc}{soc}


\title{On finite edge-primitive and edge-quasiprimitive graphs\thanks{During the prepartion of this work the first author held an Australian Research Council Australian Postdoctoral Fellowship while the second author held a QEII Fellowship.}}
\author{Michael Giudici and Cai Heng Li \\
       School of Mathematics and Statistics\\
       The University of Western Australia\\
       35 Stirling Highway\\
       Crawley WA 6009\\
       Australia}
\date{}

\begin{document}
\maketitle

\begin{abstract}
Many famous graphs are edge-primitive, for example, the Heawood graph,
the Tutte--Coxeter graph and the Higman--Sims graph. In this paper we
systematically analyse edge-primitive and edge-quasiprimitive graphs via the O'Nan--Scott Theorem to
determine the possible edge and vertex actions of such graphs. Many
interesting examples are given and we also determine all
$G$-edge-primitive graphs for $G$ an almost simple group with socle
$\PSL(2,q)$.
\end{abstract}

\section{Introduction}
Let $\Gamma$ be a finite connected graph and $G\leqslant\Aut(\Gamma)$.
We say that $\Gamma$ is \emph{$G$-edge-primitive} if $G$ acts
primitively on the set of edges of $\Gamma$, that is, if $G$ preserves
no nontrivial partition of the edge set. If $\Gamma$ is
$\Aut(\Gamma)$-edge-primitive we call $\Gamma$ \emph{edge-primitive}.
The aim of this paper is to initiate a systematic study of
edge-primitive graphs and the wider class of
\emph{edge-quasiprimitive} graphs, that is graphs with a
group of automorphisms which acts quasiprimitively on edges. (A
transitive permutation group is said to be \emph{quasiprimitive} if
every nontrivial normal subgroup is transitive). 

The Atlas \cite{atlas} notes many
edge-primitive graphs with a sporadic simple group as a group of
automorphisms. These include the Hoffman--Singleton and Higman--Sims
graphs, and the rank three graphs of the sporadic simple groups $J_2$, $McL$, $Ru$, $Suz$ and $Fi_{23}$. 
Weiss \cite{Weissprim} has determined all edge-primitive graphs of
valency three. These are the complete bipartite graph $K_{3,3}$, the
Heawood graph, the Biggs--Smith cubic distance-transitive graph on 102
vertices and the Tutte--Coxeter graph (also known as Tutte's 8-cage or
the Levi graph). All but the Biggs--Smith graph are
bipartite. We say that $\Gamma$ is \emph{$s$-arc-transitive} if the
automorphism group acts transitively on the set of \emph{$s$-arcs} of
$\Gamma$, that is, on the set of $(s+1)$-tuples
$(v_0,v_1,\ldots,v_s)$ where $v_i$ is adjacent to $v_{i+1}$ and
$v_i\neq v_{i+2}$. Of the four edge-primitive cubic graphs, $K_{3,3}$
is 3-arc-transitive, the next two are 4-arc-transitive while the
Tutte--Coxeter graph is 5-arc-transitive. 

Whereas any primitive permutation group with a nontrivial
self-paired orbital gives rise to a vertex-primitive graph, the
existence of edge-primitive graphs is far more restrictive. Given a
group $G$ there is a $G$-edge-primitive 
graph if and only if there exists a maximal subgroup $E$ of $G$
with an index two subgroup properly contained in
some corefree subgroup $H$ of $G$ with $H\neq E$ (see 
Proposition \ref{prn:general} and Lemma \ref{lem:arctrans}). 

One of the main motivations for our study of edge-primitive and
edge-quasiprimitive graphs is the study of graph decompositions
\cite{generaldecomps}.
Given a graph $\Gamma$ and a group of automorphisms
$G$, we say that a partition $\mathcal{P}$ of the edge set is a 
\emph{$G$-transitive decomposition} if $\mathcal{P}$ is $G$-invariant
and $G$ acts transitively on $\mathcal{P}$. A $G$-transitive decomposition
$\mathcal{P}$ of a graph $\Gamma$ is called a \emph{homogeneous
  factorisation} if the kernel of the action of $G$ on
$\mathcal{P}$ is vertex-transitive. Homogeneous factorisations have
been studied in \cite{GLPP2,GLPP1,LP03}. Let $\Gamma$ be a
$G$-edge-transitive graph. Then $\Gamma$ is $G$-edge-primitive if and
only if $\Gamma$ has no $G$-transitive decompositions.  
If $G$ is edge-quasiprimitive then
the $G$-transitive decompositions of $\Gamma$ are not homogeneous
factorisations. Conversely, if none of the $G$-transitive
decompositions of $\Gamma$ are homogeneous factorisations then the
kernel of each $G$-transitive decomposition is vertex-intransitive.  

If $\Gamma$ is a bipartite graph with a
vertex-transitive group of automorphisms $G$, then $G$ has a normal
subgroup $G^+$ of index 
two which fixes each of the bipartite halves setwise. We say that a
transitive group $G$ is \emph{biprimitive} if it is imprimitive and
all nontrivial systems of imprimitivity have precisely two parts,
while we say that $G$ is \emph{biquasiprimitive} if $G$ is not
quasiprimitive and every normal subgroup has at most two orbits. We
note here that some authors' definition of biprimitive as a transitive
permutation group $G$ with index two subgroup $G^+$ acting primitively
on both of its orbits is not equivalent to ours. All our biprimitive
groups are biprimitive in this sense but not all biprimitive groups in
this alternative sense are biquasiprimitive. For example
$S_n\times S_2$ acting imprimitively on $2n$ points for $n\geq 3$ has
a system of imprimitivity with $n$ parts of size $2$ while the index
two subgroup $S_n$ acts primitively on each of its orbits. 
Given property $P$, we say that a graph $\Gamma$ with a group of
automorphisms $G$ is \emph{$G$-locally $P$} if for each
vertex $v$, the vertex stabiliser $G_v$ has property $P$ on the set
$\Gamma(v)$ of all vertices adjacent to $v$. In particular, $\Gamma$
is called \emph{$G$-locally primitive} is $G_v$ acts primitively on
$\Gamma(v)$ for all vertices $v$.

For any positive integer $n$ and prime $p$, the star $K_{1,n}$ and the cycle $C_p$ are both
edge-primitive. We call these two
examples \emph{trivial}. Disconnected edge-primitive graphs are easily
reduced to connected ones (see Lemma \ref{lem:con}). We see in
Lemma \ref{lem:arctrans} that except for the trivial examples,
edge-primitivity implies arc-transitivity. 

Let $\Gamma$ be a connected $G$-arc-transitive graph and let
$\mathcal{B}$ be a $G$-invariant partition of $V\Gamma$. We define the
\emph{quotient graph} $\Gamma_{\mathcal{B}}$ to be the graph with
vertex set $\mathcal{B}$ such that $B,C\in\mathcal{B}$ are adjacent if
and only if $\Gamma$ has an edge $\{v,w\}$ with $v\in B$ and $w\in C$.  
It easily follows that $\Gamma_{\mathcal{B}}$ is arc-transitive. We
are interested in the special case where for an arc $(B,C)$ of
$\Gamma_{\mathcal{B}}$, there is only one arc $(v,w)$ of $\Gamma$ with
$v\in B$ and $w\in C$. In this case we call $\Gamma$ a
\emph{spread} of $\Gamma_{\mathcal{B}}$. 

We will see in Lemma \ref{lem:qpbiqp} that if $G$ is 
edge-primitive and vertex-transitive then it is either
vertex-quasiprimitive or vertex-biquasiprimitive on
vertices. In fact we can reduce to the vertex-primitive or
vertex-biprimitive cases.

\begin{theorem}
\label{thm:main}
Let $\Gamma$ be a connected nontrivial $G$-edge-primitive graph. Then
$\Gamma$ is $G$-arc-transitive, and one of the following holds.
\begin{enumerate}
\item $\Gamma$ is $G$-vertex-primitive.
\item $\Gamma$ is $G$-vertex-biprimitive.
\item  $\Gamma$ is a spread of a $G$-edge-primitive graph which is
  $G$-locally imprimitive.
\end{enumerate}
Conversely, a $G$-edge-primitive, $G$-locally imprimitive graph
$\Sigma$ is a quotient graph of a larger
$G$-edge-primitive graph $\Gamma$ with $G^{E\Sigma}\cong G^{E\Gamma}$.
\end{theorem}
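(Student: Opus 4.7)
The plan is to exploit the natural correspondence: for any $G$-invariant partition $\mathcal{B}$ of $V\Gamma$, the quotient map $E\Gamma\to E\Gamma_{\mathcal{B}}$ sending $\{v,w\}\mapsto\{B_v,B_w\}$ is $G$-equivariant and surjective, and its fibres form a $G$-invariant partition of $E\Gamma$ which must be trivial when $G$ is edge-primitive. Arc-transitivity follows immediately from Lemma~\ref{lem:arctrans}, and Lemma~\ref{lem:qpbiqp} then places $G$ as vertex-quasiprimitive or vertex-biquasiprimitive; assume we are not in case (1) or (2).

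Under this assumption $V\Gamma$ admits a $G$-invariant partition $\mathcal{B}$ with $|\mathcal{B}|\geq 3$. In the vertex-quasiprimitive-but-not-primitive case, a $2$-part partition would give a normal subgroup of index at most $2$ with two orbits, contradicting quasiprimitivity in a nontrivial graph, so every nontrivial partition has $\geq 3$ parts. In the biquasiprimitive-but-not-biprimitive case the canonical $G^{+}$-orbit partition (with $2$ parts) coexists with some further nontrivial partition, which by definition of biprimitive has $\neq 2$, hence $\geq 3$, parts. Form $\Gamma_{\mathcal{B}}$: the fibre partition on $E\Gamma$ cannot be a single block (that would force $|E\Gamma_{\mathcal{B}}|=1$, impossible for a connected graph on $\geq 3$ vertices), so by edge-primitivity the edge map is bijective, which is precisely the spread condition; bijectivity transfers edge-primitivity to $\Gamma_{\mathcal{B}}$ and gives $G^{E\Gamma}\cong G^{E\Gamma_{\mathcal{B}}}$. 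For local imprimitivity at $B$, note that $G_v$-transitivity on $\Gamma(v)$ splits $\Gamma(v)$ trivially into the $G_v$-invariant subsets $\Gamma(v)\cap B$ and $\Gamma(v)\setminus B$, and connectedness forces the former to be empty so $|\Gamma(v)\setminus B|=|\Gamma(v)|\geq 2$; spread then allows us to label each $C\in\Gamma_{\mathcal{B}}(B)$ by the unique $v\in B$ having a neighbour in $C$, producing a $G_B$-invariant partition of $\Gamma_{\mathcal{B}}(B)$ with $|B|\geq 2$ parts each of size $|\Gamma(v)|\geq 2$, so $G_B$ is imprimitive on $\Gamma_{\mathcal{B}}(B)$, giving (3).

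For the converse, fix $v_0\in V\Sigma$ and choose a nontrivial $G_{v_0}$-invariant partition $\P_{v_0}$ of $\Sigma(v_0)$ using local imprimitivity. Transport it by setting $\P_v:=\P_{v_0}^g$ whenever $v=v_0^g$, which is well-defined by $G_{v_0}$-invariance of $\P_{v_0}$, yielding a $G$-invariant family $\{\P_v\}_{v\in V\Sigma}$. Let $\Gamma$ have vertex set $\{(v,P):v\in V\Sigma,\,P\in\P_v\}$ and, for every edge $\{v,w\}$ of $\Sigma$, a single edge $\{(v,P_v(w)),(w,P_w(v))\}$, where $P_v(w)\in\P_v$ is the part containing $w$. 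The first-coordinate projection is a $G$-invariant partition of $V\Gamma$ whose quotient graph is $\Sigma$, and the assignment of edges is a $G$-equivariant bijection $E\Gamma\to E\Sigma$, so $G^{E\Gamma}\cong G^{E\Sigma}$ as permutation groups; in particular, edge-primitivity transfers to $\Gamma$, and $|V\Gamma|=|V\Sigma|\cdot|\P_{v_0}|>|V\Sigma|$ confirms that $\Gamma$ is strictly larger.

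The step I expect to be most delicate is confirming that the $\Gamma$ produced by the converse is genuinely a new graph in the desired sense, in particular connected: the pair construction can a priori split into several $G$-permuted components. The natural remedy is to restrict to a single connected component, on which $G$ still acts edge-primitively via the same bijective correspondence $G^{E\Gamma}\cong G^{E\Sigma}$, and to verify vertex-transitivity on this restriction. A parallel but lighter care is needed in the forward direction to see that the fibre partition on $E\Gamma$ really is $G$-invariant (which follows directly from $G$-invariance of $\mathcal{B}$) before invoking edge-primitivity.
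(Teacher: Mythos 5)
Your forward direction is essentially the paper's argument: arc-transitivity from Lemma \ref{lem:arctrans}, a $G$-invariant partition $\mathcal{B}$ with at least three parts when neither (1) nor (2) holds, and the observation that the set of edges between two adjacent blocks is a block of imprimitivity on $E\Gamma$, forcing the spread condition. Your proof of local imprimitivity of $\Gamma_{\mathcal{B}}$ is combinatorial (labelling each $C\in\Gamma_{\mathcal{B}}(B)$ by the unique $v\in B$ meeting it) where the paper simply exhibits the subgroup chain $G_{B_1B_2}=G_{vw}<G_v<G_{B_1}$; these are the same block system seen from two sides, and both are fine. In the converse you genuinely diverge: the paper picks $H$ with $G_{\alpha\beta}<H<G_{\alpha}$ and forms the coset graph $\Cos(G,H,HgH)$, whereas you build the blow-up on pairs $(v,P)$ with $P\in\mathcal{P}_v$ explicitly. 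The two constructions produce the same graph (the coset $Hx$ corresponds to the pair $(\alpha^x, P^x)$ where $H=(G_\alpha)_P$), so your route is valid and arguably more transparent, at the cost of having to verify by hand facts that the coset-graph formalism gives for free --- most notably connectivity.

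That is exactly where your proposal has its one real weakness: the remedy you suggest for possible disconnectedness does not work as stated. If $G$ permutes several components of $\Gamma$, you cannot ``restrict to a single connected component'' and retain an action of all of $G$ on it, so edge-primitivity of the restriction would not follow. Fortunately the worry evaporates for a different reason, using only what you have already established. The $G$-equivariant bijection $E\Gamma\to E\Sigma$ holds whether or not $\Gamma$ is connected, so $G$ is primitive on $E\Gamma$; the partition of $E\Gamma$ by connected components is $G$-invariant, hence is either a single part or all singletons. Every vertex $(v,P)$ of $\Gamma$ has valency $|P|\geq 2$ (its neighbours are the $(w,P_w(v))$ for $w\in P$, and $|P|\geq 2$ since $\mathcal{P}_v$ is a nontrivial block system), so no component carries exactly one edge; hence all edges lie in one component, and since there are no isolated vertices that component is all of $\Gamma$. (In the paper's language this is the observation that $\langle H,g\rangle$ properly contains the maximal subgroup $G_{\{\alpha,\beta\}}=\langle G_{\alpha\beta},g\rangle$, hence equals $G$.) With that substitution for your final paragraph, the proof is complete.
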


This reduces the study of edge-primitive graphs to those which are
also vertex-primitive or vertex-biprimitive. 

The actions of primitive permutation groups are described by the
O'Nan--Scott Theorem. We follow the subdivision in \cite{praegerbcc}
of primitive groups 
into 8 types and these are described in Section \ref{sec:prelim}. By
playing the edge-primitive action of $G$ against the vertex-primitive
action of $G$ or $G^+$ we see that the possible actions for
edge-primitive graphs are quite restrictive.

\begin{theorem}
\label{thm:eprim}
Let $\Gamma$ be a connected nontrivial $G$-edge-primitive graph with
$G^{E\Gamma}$ primitive of type $X$ such that $G^{V\Gamma}$ is either primitive or biprimitive. Then one of the following holds.
\begin{enumerate}
\item $\Gamma=K_{n,n}$.
\item $G^{V\Gamma}$ is primitive of type $X$ and 
$X\in\{\mathrm{AS, PA}\}$.
\item $G^{V\Gamma}$ is biprimitive and $G^+$ is primitive of type $X$
  on each orbit with $X\in\{\mathrm{AS,PA}\}$.
\item $G^{E\Gamma}$ is of type {\rm SD} or {\rm CD}, $\Gamma$ is
  bipartite and arises from Construction \ref{con:SDCDtrans}, and
  $G^+$ is primitive of type {\rm CD} on each orbit. 
\end{enumerate}
\end{theorem}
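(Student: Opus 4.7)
The plan is to proceed by case analysis on the O'Nan--Scott type $X$ of the edge action of $G$, comparing the implied socle structure with what is possible for the vertex action. First I would apply Theorem \ref{thm:main} to conclude $\Gamma$ is $G$-arc-transitive, and use connectedness together with arc-transitivity to ensure $G$ acts faithfully on both vertices and edges, so that the socle $T = \soc(G)$ is common to both actions while only its orbit structure and O'Nan--Scott type may vary. Fixing an edge $e = \{v,w\}$, the edge stabiliser $G_e$ is maximal in $G$; the vertex stabiliser $G_v$ is either maximal in $G$ (primitive case) or contained with index at most two in a maximal subgroup $G^+$ of $G$ that stabilises a bipartite half (biprimitive case). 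In either situation $G_{vw}$ has index $1$ or $2$ in $G_e$, giving a concrete comparison between the relevant maximal-subgroup lattices.

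I would then dispose of the types one at a time. For $X = \mathrm{HA}$, the abelian socle is regular on edges, and matching this against a primitive or biprimitive vertex action with the same abelian socle forces $|E\Gamma|$ and $|V\Gamma|$ to agree up to a factor of $2$; by the handshake identity the valency is $2$ or $4$, and both subcases are either trivial cycles or directly eliminated. The $\mathrm{TW}$ case, where $T$ is a nonabelian regular socle with very restrictive twisted-wreath point stabiliser, must be ruled out by confronting its rigid stabiliser shape with the corresponding requirements on the vertex action. Types $\mathrm{HS}$ and $\mathrm{HC}$, whose socles split into two minimal normal subgroups exchanged by $G$, force a bipartite structure with the halves corresponding to these two factors, and pinning down the edge--vertex compatibility should leave only $\Gamma = K_{n,n}$. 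For the $\mathrm{SD}$ and $\mathrm{CD}$ cases the plan is to show that $G^+$ has vertex orbits forming a bipartition, that on each half $G^+$ is primitive of type $\mathrm{CD}$ (an $\mathrm{SD}$ edge action is broken by passing to the index-two kernel of the arc-flipping action), and then to recover Construction \ref{con:SDCDtrans} by identifying the halves with cosets of suitable diagonal subgroups. Finally, for $X \in \{\mathrm{AS},\mathrm{PA}\}$, since $T$ is nonabelian and nonregular the vertex action shares $T$ and is limited to types in $\{\mathrm{AS},\mathrm{PA},\mathrm{SD},\mathrm{CD},\mathrm{HS},\mathrm{HC},\mathrm{TW}\}$; the previous analyses together with a socle-rank comparison between $G_e$ and $G_v$ should leave only $X$ itself, giving cases (ii) and (iii).

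The principal obstacle I anticipate is the $\mathrm{SD}/\mathrm{CD}$ analysis leading to item (iv): both exhibiting Construction \ref{con:SDCDtrans} and showing that every such $G$-edge-primitive graph arises from it require careful control of the diagonal subgroup structure and of how $G^+$ inherits a type-$\mathrm{CD}$ action on each bipartite half. A secondary difficulty is the $\mathrm{TW}$ elimination, since its nonabelian regular socle escapes the simple order-counting used for $\mathrm{HA}$ and demands a more delicate analysis of the twisted wreath point stabiliser together with the maximality of $G_e$.
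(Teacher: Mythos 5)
Your overall route---running through the O'Nan--Scott types of the edge action and playing each against the possible vertex actions---is the same as the paper's, whose proof is assembled from Theorem \ref{thm:edgeqp}, Proposition \ref{prn:TW}, Corollaries \ref{cor:SDeprim} and \ref{cor:CDeprim}, and Lemma \ref{lem:PA}. However, two of your steps have genuine gaps. First, your order-counting for the types whose minimal normal subgroup(s) are regular on edges (HA, HS, HC, and implicitly TW) tacitly assumes that the relevant normal subgroup acts faithfully on each bipartite half, so that it is again regular there and $|V\Gamma|$ can be compared with $|E\Gamma|$. That assumption fails exactly where the surviving examples live: for $K_{3,3}$ with $G=S_3\Wr S_2$ the socle $C_3^2$ is regular on the $9$ edges (type HA) but has kernel $C_3$ on each vertex half of size $3$, so your conclusion that the HA case yields only ``trivial cycles or is directly eliminated'' wrongly excludes a bona fide case (1) example. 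The missing ingredient is the paper's Lemma \ref{lem:faithful}: a normal subgroup unfaithful on a bipartite half forces $\Gamma=K_{n,n}$; only after invoking it may one assume faithfulness and run the counting argument. The same remark applies to TW, where the correct conclusion (Proposition \ref{prn:TW}) is not that the type is impossible but that $\Gamma$ is complete bipartite, and reaching even that requires a substantial argument with Lemma \ref{lem:TWmax} rather than a generic appeal to the ``rigid stabiliser shape''.

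Second, for $X=\mathrm{PA}$ your proposed ``socle-rank comparison between $G_e$ and $G_v$'' cannot eliminate the SD and CD vertex types: these share the socle $T^k$ with the same $k$, and Examples \ref{eg:PASD} and \ref{eg:PACD} exhibit edge-quasiprimitive graphs that are PA on edges and SD (respectively CD) on vertices, so no comparison of socle structure alone can rule them out---edge-primitivity must be used in an essential way. The paper's Lemma \ref{lem:PA} does this by noting that primitivity of $G^{E\Gamma}$ forces $N_e=R^k$ with $R=H\cap T$ for a maximal subgroup $H$ of an almost simple group with socle $T$, shows $|R|>2$, deduces $N_{vw}=N_e$ so that $R^k\leqslant N_v$, and then uses faithfulness of $N$ on its vertex orbits (Lemma \ref{lem:faithful} again) to conclude that $N_v$ projects onto no simple direct factor, i.e.\ the vertex type is PA. You would need an argument of this kind, not merely a rank count, to obtain cases (2) and (3).
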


We see in Sections \ref{sec:eg} and \ref{sec:con} that examples
exist in all cases. Moreover, we can find $G$-locally
imprimitive examples in each case. A characterisation of all groups
which act edge-primitively on $K_{n,n}$ is given in Theorem
\ref{thm:compbipartite}. We also see in Proposition
\ref{prn:PAtoAS} that the existence of $G$-edge-primitive graphs
with $G$ of type PA relies on the existence of edge-primitive graphs
where the action on edges is of type AS.  

We undertake much of our analysis in the context of
vertex-quasiprimitive graphs and only specialise to the edge-primitive
case when we are able to obtain stronger conclusions. There are
however, a couple of notable differences between the two classes.
There are many 
$G$-edge-quasiprimitive graphs with $G$ not vertex-transitive,
for example any bipartite graph with an edge-transitive simple group
$G$ of automorphisms is $G$-edge-quasiprimitive while $G$ has two
orbits on vertices. Vertex-transitive, edge-quasiprimitive graphs are
still either vertex-quasiprimitive or vertex-biquasiprimitive but we
are no longer able to reduce to the vertex-primitive or
vertex-biprimitive cases. Theorem \ref{thm:edgeqp} is an analogue of
Theorem \ref{thm:eprim} in the $G$-vertex-transitive,
$G$-edge-quasiprimitive case.

It appears feasible to determine all edge-primitive graphs for certain
families of almost simple groups, for example, for low rank groups of
Lie type. We begin this process in Section
\ref{sec:PSL} by determining all $G$-edge-primitive graphs where
$\soc(G)=\PSL(2,q)$. The \emph{socle} (denoted $\soc(G)$) of a group
$G$ is the product of all of its minimal normal subgroups.

\begin{theorem} 
\label{thm:PSL}
Let $\Gamma$ be a $G$-edge-primitive graph with
$\soc(G)=\PSL(2,q)$, such that $q=p^f$ for some prime $p$ and 
$q\neq 2,3$. Then either $\Gamma$ is 
complete and $G$ is listed in Table \ref{tab:2subsetprim}, or 
$\Gamma$ and $G$ are given in Table \ref{tab:psl}.
\end{theorem}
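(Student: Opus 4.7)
The plan is to combine Proposition \ref{prn:general} (which translates edge-primitive graphs into a subgroup-theoretic existence statement) with Dickson's classification of the subgroups of $\PSL(2,q)$ and the description of the maximal subgroups of each almost simple group $G$ with socle $T = \PSL(2,q)$. By Proposition \ref{prn:general} together with Lemma \ref{lem:arctrans}, a connected nontrivial $G$-edge-primitive graph $\Gamma$ is determined, up to isomorphism, by a triple $(G, H, E)$, where $E$ is a maximal subgroup of $G$ (the edge stabiliser), $H \neq E$ is a core-free subgroup of $G$ (the vertex stabiliser), $N := H \cap E$ has index $2$ in $E$, and $\la H, E \ra = G$ (for connectedness). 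The graph then has vertex set $[G:H]$, edge set $[G:E]$, and valency $[H:N]$; the table entries are obtained by identifying the resulting graphs.

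The strategy is a case analysis on the maximal subgroup $E$ of $G$. Dickson's theorem gives a short list of possibilities for $E\cap T$: the stabiliser of a point of $\PG(1,q)$ of shape $[q]{:}C_{(q-1)/d}$, the normalisers of cyclic subgroups of order $(q\pm 1)/d$ (essentially dihedral), the sporadic small subgroups $A_4$, $S_4$, $A_5$ that occur for certain congruences on $q$, and the subfield/field-extension subgroups $\PSL(2,q_0)$ or $\PGL(2,q_0)$ where $q = q_0^r$ with $r$ prime. For each such $E$, one lists its index-$2$ subgroups $N$, determines all over-groups of $N$ in $G$ by reapplying the subgroup classification to the intermediate groups, and retains only those $H$ which are core-free, distinct from $E$, and satisfy $\la H, E \ra = G$.

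Once the admissible triples $(G, H, E)$ are enumerated, each resulting graph is identified explicitly. When $H$ is the stabiliser of a point of $\PG(1,q)$, so that $G$ acts on vertices as its natural $2$-transitive action on $q+1$ points, the graph is necessarily complete; here one invokes the known classification of $2$-homogeneous subgroups of $\PGammaL(2,q)$ to produce the entries of Table \ref{tab:2subsetprim}. All remaining admissible triples yield the exceptional examples of Table \ref{tab:psl}, some of which should be recognised as standard graphs (incidence graphs of classical geometries, and graphs such as the Heawood or Tutte--Coxeter graph) arising at small values of $q$.

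The principal obstacle is the bookkeeping across many subcases, in particular: (a) the small-$q$ exceptions in which $\PSL(2,q)$ admits extra maximal subgroups or presents accidental isomorphisms such as $\PSL(2,4)\cong\PSL(2,5)\cong A_5$, $\PSL(2,7)\cong\PSL(3,2)$, $\PSL(2,9)\cong A_6$; (b) the variation of $G$ between $T$ and $\PGammaL(2,q)$, which alters both the supply of maximal subgroups and the availability of outer involutions capable of swapping the two ends of an edge; and (c) the connectivity condition $\la H, E\ra = G$, which typically does the bulk of the pruning and usually forces $H$ and $E$ to come from distinct Dickson families. Finally, some care is needed to discard isomorphic duplicates arising from different triples $(G, H, E)$ producing the same graph.
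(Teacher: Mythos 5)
Your proposal matches the paper's proof in both strategy and structure: reduce via Proposition \ref{prn:general} to finding triples $(G,H,E)$ with $E$ maximal and $A=H\cap E$ of index $2$ in $E$, split off the $2$-transitive (complete graph) case via Sibley's classification, and run a case analysis over Dickson's list of maximal subgroups together with the novelty maximals of the almost simple overgroups (the paper's Propositions \ref{prn:containment} and \ref{prn:exceptions}). The only minor quibble is that the condition $\la H,E\ra=G$ is automatic from the maximality of $E$ once $A<H\neq E$, so the real pruning comes from deciding which index-two subgroups of maximal subgroups admit proper core-free overgroups other than $E$ itself, exactly as in the paper.
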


\begin{table}
\begin{center}
\caption{$G$-edge primitive with $\soc(G)=\PSL(2,q)$}
\label{tab:psl}
\begin{tabular}{|l|l|}
\hline
$G$ & $\Gamma$  \\
\hline\hline
$\PGL(2,7)$ & Heawood graph (Example \ref{eg:pthyp})\\ 
$\PGL(2,7)$  & co-Heawood graph (Example \ref{eg:pthyp})\\
$\PGL(2,9)$, $M_{10}$ or $\PGammaL(2,9)$  & $K_{6,6}$ \\
$\PGL(2,9)$, $M_{10}$ or $\PGammaL(2,9)$ & Tutte--Coxeter graph (Example \ref{eg:symp4}) \\
$\PGL(2,11)$  & $(H,E,E\cap H)=(A_5,D_{20}, D_{10})$ \\
$\PSL(2,17)$ & Biggs--Smith graph \\
             & $(H,E,E\cap H)=(S_4,D_{16}, D_8)$ \\
$\PSL(2,19)$ &$(H,E,E\cap H)=(A_5,D_{20}, D_{10} )$ \\
$\PSL(2,25)$ or $\PSigmaL(2,25)$ & Example \ref{eg:orthognoniso} \\
$\PSL(2,p)$, $p\equiv \pm 1,\pm 9 \pmod {40}$  & $(H,E,E\cap H)=(A_5,S_4,A_4)$  \\
$\PGL(2,p)$, $p\equiv \pm 11,\pm 19\pmod{40}$ &$(H,E,E\cap H)=(A_5,S_4,A_4)$  \\
\hline
\end{tabular}
\end{center}
\end{table}
In some rows of Table \ref{tab:psl} we just state the edge stabiliser
$E$ and vertex stabiliser $H$ along with $H\cap E$ as by Proposition
\ref{prn:general}, a $G$-edge-transitive graph is uniquely determined by the 
vertex stabiliser and edge stabiliser. Note for the first two examples
$\PGL(2,7)\cong\Aut(\PSL(3,2))$, for the fourth example note 
$\PGammaL(2,9)\cong \Aut(\PSp(4,2))$, while for the eighth example
$\PSL(2,25)\cong \POmega^-(4,5)$. Apart from complete graphs and
$K_{6,6}$, we get two infinite families and seven sporadic
examples. All of the graphs listed in Table \ref{tab:psl} are
2-arc-transitive except for the eighth one.

\section{Some examples}
\label{sec:eg}
If $G\leqslant S_n$ acts arc-transitively on $K_n$ then $G$ is
2-transitive on vertices.
Moreover, $G$ is edge-primitive if and only if $G$ acts primitively on
2-subsets. The following theorem, which is essentially 
\cite[Theorem 6]{sibley}, classifies all such $G$. 

\begin{theorem}
\label{thm:prim2subsets}
Let $G$ be a 2-transitive subgroup of $S_n$ such that $G$ is primitive
on 2-subsets. Then $G$ and $n$ are as in Table \ref{tab:2subsetprim}.
\end{theorem}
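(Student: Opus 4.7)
The plan is to first reformulate the condition combinatorially, and then grind through the classification of finite $2$-transitive groups (which rests on CFSG). Since $G\leqslant S_n$ is $2$-transitive, it is transitive on unordered $2$-subsets, and a pair stabiliser $G_{\{x,y\}}$ contains the pointwise stabiliser $G_{xy}$ as an index-$2$ subgroup; thus $[G:G_{\{x,y\}}]=\binom{n}{2}$. The action of $G$ on $2$-subsets is primitive if and only if $G_{\{x,y\}}$ is a maximal subgroup of $G$. Equivalently, $G$ is imprimitive on $2$-subsets precisely when there is an intermediate subgroup $G_{\{x,y\}}<H<G$; the orbits of $H$ on $2$-subsets are then the blocks of an invariant partition, and the orbit of $\{x,y\}$ under $H$ is the edge set of a proper $G$-invariant graph structure on $\{1,\dots,n\}$ that is larger than the arc orbital $\{(x,y),(y,x)\}$.

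Next, I would invoke the CFSG-based classification of finite $2$-transitive permutation groups (as surveyed e.g.\ by Cameron), which splits $G$ into two families: the affine groups with socle an elementary abelian $p$-group acting on $\F_p^d$, and the almost simple groups whose socle lies in an explicit short list ($A_n$, $\PSL(d,q)$, $\PSU(3,q)$, $Sz(q)$, ${}^2G_2(q)$, $\PSp(2m,2)$ in its two actions on $2^{2m-1}\pm 2^{m-1}$ points, the Mathieu groups, $HS$, $Co_3$, $\PSL(2,11)$ on $11$ points, $A_7$ on $15$ points). For each such family, I would test maximality of $G_{\{x,y\}}$ against the known list of maximal overgroups of $G_{xy}$ in $G$. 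In the affine case $G=V\rtimes G_0$ with $V=\F_p^d$ and $G_0\leqslant\GL(d,p)$ a transitive linear group, the pair stabiliser is (up to conjugacy) the stabiliser of a $1$-space through the origin in $G_0$, extended by an involution swapping $x$ and $y$; the maximality question is then whether the corresponding parabolic-like subgroup has no proper overgroup in $G$, which typically fails unless $d=1$ or the very small exceptional transitive linear groups. In the almost simple case one reads off the subgroup lattice around $G_{\{x,y\}}$ from standard references (ATLAS, Kleidman--Liebeck, etc.) and keeps those groups for which no intermediate subgroup exists.

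The main obstacle is not conceptual but bookkeeping: one has to verify, family by family, whether a subgroup of shape $G_{\{x,y\}}=G_{xy}.2$ is caught inside a larger proper subgroup, and this is most delicate for the classical families $\PSL(d,q)$, $\PSU(3,q)$, $\PSp(2m,2)$ and for the subgroup $A\Gamma L(1,q)$, where subgroup lattices are rich and where small $q$ or special congruences on $q$ create exceptional behaviour that must be excluded or promoted into separate rows of the table. A useful guiding observation is that if $G$ admits a $G$-invariant nontrivial graph $\Gamma$ on $n$ vertices whose edge set is larger than a single $G$-orbit of pairs, then $G$ is imprimitive on $2$-subsets; conversely, an intermediate $H$ always gives rise to such a graph. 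This criterion lets one rule out large families quickly (for instance $\PSL(d,q)$ with $d\geqslant 3$, where the natural action preserves the collinearity graph of $\PG(d-1,q)$), and focuses attention on the few configurations where no such intermediate graph structure exists. The final list is then exactly the table, matching \cite[Theorem~6]{sibley}.
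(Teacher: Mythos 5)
Your overall strategy is the one the paper (and Sibley) uses: translate primitivity on $2$-subsets into maximality of the pair stabiliser, split $G$ into the affine and the almost simple cases, kill the affine case for $d\geq 2$ geometrically, and reduce the almost simple case to subgroup bookkeeping that reproduces Sibley's Theorem~6. The paper obtains the dichotomy from Burnside's theorem rather than the full CFSG list of $2$-transitive groups, and simply cites Sibley for the almost simple half, so the fact that you do not actually carry out that bookkeeping is not a gap relative to the paper.

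There is, however, a genuine error in the tool you propose for ruling out large families. You claim that an intermediate subgroup $G_{\{x,y\}}<H<G$ yields a proper $G$-invariant graph on $\{1,\dots,n\}$, and conversely. This is false: since $G$ is $2$-transitive it has a single orbit on pairs, so the only $G$-invariant graphs are the empty and the complete graph. What an intermediate $H$ actually gives is a nontrivial $G$-invariant \emph{partition} of the edge set of $K_n$ (a $G$-transitive decomposition), whose blocks are the $H$-orbits of $\{x,y\}$; each block is $H$-invariant, not $G$-invariant. As a consequence your proposed quick dismissal of $\PSL(d,q)$, $d\geqslant 3$, via ``the collinearity graph of $\PG(d-1,q)$'' does not work: in a projective space any two points are collinear, so that graph is $K_n$. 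The correct argument --- the same one the paper uses in the affine case --- is that two points span a unique line $l$ with at least three points, so $G_{\{x,y\}}<G_l<G$ with both containments strict, and the partition of pairs according to the line they span is a nontrivial block system. In the affine case the paper sharpens this to the chain $G_{\{u,v\}}\leqslant G_l<G_B\leqslant G$ with $B$ the parallel class of $l$, which disposes of every $d\geq 2$ uniformly (no ``small exceptional transitive linear groups'' survive, contrary to your hedge), leaving only $d=1$ with $p=2,3$. With the graph/partition confusion repaired, your argument goes through and coincides with the paper's.
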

\begin{proof}
 By Burnside's Theorem (see for example \cite[Theorem
  4.1B]{dixon}), $G$ is either almost simple or a subgroup of
 $\AGL(d,p)$ with $n=p^d$ for some prime $p$. Sibley \cite{sibley}
 classified all $G$-transitive decompositions of $K_n$ for $G$ a
 2-transitive simple group and so this yields a classification of almost simple groups acting
edge-primitively on $K_n$.  Suppose now that $G\leqslant \AGL(d,p)$
 and let $u,v$ be a pair of points of $\AG(d,p)$. Then $\{u,v\}$ lies
 on a unique line $l$ and so $G_{\{u,v\}}\leqslant G_l< G_B\leqslant
 G$, where $B$ is the parallel class containing $l$. Thus for $d\geq
 2$, $G$ is not primitive on 2-subsets. Note that this includes $A_4$
 and $S_4$. When $d=1$, there is a unique parallel class and
 $G_{\{u,v\}}\cong C_2$. In this case, $G$ is primitive on 2-subsets
 if and only if $p=2$ or $3$. Here $G\cong S_2,S_3$ respectively.  
\end{proof}
\begin{table}
\begin{center}
\caption{2-transitive groups which are primitive on 2-subsets}
\label{tab:2subsetprim}
\begin{tabular}{|lll|}
\hline
$n$ & $G$  & Conditions\\
\hline\hline
$n$     &$S_n$ & $n\neq 4$\\
$n$     &$A_n$ & $n\geq 5$\\
$q+1$   & $\soc(G)=\PSL(2,q)$& $q\geq 7$ \\
&&$\begin{array}{ll} 
   \!\!\! G\neq &\!\! \PSL(2,7), \,\, \PSL(2,9), \\
            &\!\! \PSigmaL(2,9) \text{ or } \PSL(2,11). 
                  \end{array} $\\
$q^2+1$ &$\soc(G)=Sz(q)$ & $q=2^{2d+1}$\\
11      &$\PSL(2,11)$ & \\
11      & $M_{11}$ &\\
12      & $M_{11}$ &\\
12      & $M_{12}$ &\\
22      & $M_{22}$, $\Aut(M_{22})$&\\
23      & $M_{23}$ &\\
24       & $M_{24}$ &\\
176      & $HS$ &\\
276     & $Co_3$ &\\
\hline
\end{tabular}

\end{center}
\end{table}

 There are many geometrical constructions  of edge-primitive graphs with the following being just a couple.

\begin{example}
\label{eg:pthyp}
{\rm 
Let $T=\PSL(d,q)$ for $d\geq 3$ and $G=\Aut(T)$. Let $\Delta_1$ be the
set of $r$-dimensional subspaces of a $d$--dimensional vector space
over $\GF(q)$ with $1\leq r <d/2$ and let $\Delta_2$ be the set of
$(d-r)$-dimensional  subspaces. We define
$\Gamma$ to be the bipartite graph with vertex set $\Delta_1\cup \Delta_2$ with adjacency given by inclusion. Then $G\leqslant\Aut(\Gamma)$ and acts
biprimitively on vertices such that the stabiliser $G^+$ of the
bipartition is equal to $\PGammaL(d,q)$.  Moreover, the
stabiliser $E$ of an edge is a maximal subgroup of $G$ and so $\Gamma$ is
$G$-edge-primitive. When $(d,r)=(3,1)$, the graph obtained is 4-arc
transitive and when $(d,r,q)=(3,1,2)$, the graph obtained is the
Heawood graph.   

Alternatively, we can define an $r$-space to be adjacent to a
$(d-r)$-space if they are complementary. This also gives us a
$G$-edge-primitive graph with $G$ acting biprimitively on vertices and
when $(d,r,q)=(3,1,2)$ we get the co-Heawood graph.
}%
\end{example}
\begin{example}
\label{eg:symp4}
{\rm Let $V$ be a 4-dimensional vector space over $\GF(q)$ with $q$ even
and let $B$ be a nondegenerate alternating form. Let $\Delta_1$ be the
set of totally isotropic 1-spaces and $\Delta_2$ be the set of totally
isotropic 2-spaces. Define $\Gamma$ to be the graph with vertex set
$\Delta_1\cup\Delta_2$ and adjacency defined by inclusion. Then
$\PGammaSp(4,q)$ is an edge-transitive group of automorphisms of $\Gamma$
but has two orbits on vertices. Let $\tau$ be a duality of the polar
space interchanging $\Delta_1$ and $\Delta_2$. Then 
$G=\la \PGammaSp(4,q),\tau\ra$ is an arc-transitive group
of automorphisms of $\Gamma$ which is vertex-biprimitive. Moreover,
an edge stabiliser $G_e$ is a maximal subgroup of
$G$ and so $\Gamma$ is $G$-edge-primitive. When $q=2$, $\Gamma$ is the
Tutte--Coxeter graph. 
}%
\end{example}

There are also many other geometrical constructions of infinite families of
edge-primitive graphs involving sesquilinear or quadratic forms. We give one such
example here.

\begin{example}
\label{eg:orthognoniso}
{\rm Let $V$ be a vector space of dimension $d$ over the field $\GF(q)$,
with $q=3$ or $5$, and let $Q$ be a nondegenerate quadratic form on $V$ with
associated bilinear form $B$. Let $\Gamma$ be the graph whose vertex
set is the set of all nonsingular 1-spaces upon which the quadratic
form is a square with adjacency given by orthogonality with respect
to $B$. By Witt's Lemma, the group $G=\PO(d,q)$ of all isometries of
$Q$ is an arc-transitive automorphism group of $\Gamma$. 

Let $e=\{\la v\ra,\la w\ra\}$ be an edge of $\Gamma$. If
$q=5$ then $\la v,w\ra$ is a hyperbolic line while if $q=3$ then 
$\la v,w\ra$ is anisotropic. Moreover, in both cases $\la v\ra$, 
$\la w\ra$ are the only 1-spaces of $\la v,w\ra$ upon which $Q$ is a
square. Thus $G_e=G_{\la v,w\ra}$. By \cite{Kingnonisosubs}, it
follows that if $q=5$ then $G_e$ is maximal in $G$ except when $d=4$
and $Q$ is hyperbolic. Also, if $q=3$ then $G_e$ is maximal in $G$
except when $d=4$ or $5$.  
}%
\end{example}

Edge-primitive graphs can be defined via group theoretic means using
the coset graph construction.
Let $G$ be a group with a core-free subgroup $H$. Let $g\in G$ such
that $g$ does not normalise $H$ and $g^2\in H$. We define the coset
graph $\Gamma=\Cos(G,H,HgH)$ to have vertex set, the set $[G:H]$ of
right cosets of $H$ in $G$ with two vertices $Hx, Hy$ being adjacent if
and only if $xy^{-1}\in HgH$. The graph $\Gamma$ is connected if and
only if $\la H,g\ra= G$. Moreover, $G$ acts as an arc-transitive group
of automorphisms of $\Gamma$ via right multiplication. The valency of
$\Gamma$ is $|H:H\cap H^g|$ while the stabiliser of the edge $\{H,Hg\}$ is
$\la H\cap H^g,g\ra$. Conversely, suppose that $\Gamma$ is a
graph with adjacent vertices $v$ and $w$. Let $G\leqslant\Aut(\Gamma)$
be arc-transitive and let $g\in G$ interchange $v$ and $w$. Then
$\Gamma\cong \Cos(G,G_v,G_vgG_v)$. We have the following
characterisation of arc-transitive edge-primitive graphs.

\begin{proposition}
\label{prn:general}
Let $G$ be a group with a maximal subgroup $E$. Then there exists a
$G$-edge-primitive, arc-transitive graph  $\Gamma$ with edge
stabiliser $E$ if and only if $E$ has a subgroup $A$ of index two, and
$G$ has a corefree subgroup $H$ such that $A<H\neq E$; in this case
$\Gamma=\Cos(G,H,HgH)$ for some $g\in E\backslash A$.
\end{proposition}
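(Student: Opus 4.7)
My plan is to establish the two directions by translating properties of the graph into group data (and back) via the coset graph construction recalled just before the statement. For the \emph{only if} direction, I would fix an edge $e=\{v,w\}$ of $\Gamma$ and set $H=G_v$, $A=G_v\cap G_w$, and $E=G_e$. Edge-primitivity makes $E$ maximal in $G$, and the faithful action of $G\leqslant\Aut(\Gamma)$ on vertices makes $H$ corefree. Arc-transitivity supplies some $g\in E$ interchanging $v$ and $w$, so $A$ is the kernel of the action of $E$ on the two-element set $e$ and hence has index $2$ in $E$. Clearly $A\leqslant H$, with strict containment for any $\Gamma$ of valency at least $2$, since $H$ acts transitively on $\Gamma(v)$ by arc-transitivity; and $H\neq E$ because $H$ fixes $v$ whereas $E$ does not. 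The identification $\Gamma=\Cos(G,H,HgH)$ is then the standard coset-graph description of an arc-transitive graph recalled earlier.

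For the converse I would start with the data $E$ maximal, $A$ of index $2$ in $E$, and $H$ corefree with $A<H\neq E$. Since $A$ is normal in $E$, for any $g\in E\setminus A$ one has $g^2\in A\subseteq H$. A first key step is to show $E\cap H=A$: the subgroup $E\cap H$ of $E$ contains $A$, and if $E\cap H=E$ then $E\leqslant H$, which combined with the maximality of $E$ and $H\neq G$ (forced by $H$ being corefree in a nontrivial $G$) would give $E=H$, contradicting $H\neq E$; thus $E\cap H$ is proper in $E$ and, containing the index-$2$ subgroup $A$, must equal $A$. In particular $g\notin H$, so the coset graph $\Gamma:=\Cos(G,H,HgH)$ is well-defined, and $G$ acts arc-transitively on it by right multiplication with vertex stabiliser $H$ and a distinguished edge $\{H,Hg\}$.

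The crux is to pin down the edge stabiliser $G_{\{H,Hg\}}$ as exactly $E$. Normality of $A$ in $E$ yields $A^g=A\leqslant H$, so $A\leqslant H\cap H^g$; together with $g$ swapping $H$ and $Hg$, this gives $E=\la A,g\ra\leqslant G_{\{H,Hg\}}$. The maximality of $E$ then forces $G_{\{H,Hg\}}\in\{E,G\}$, and I would rule out $G_{\{H,Hg\}}=G$ by noting that it would mean $G$ fixes $\{H,Hg\}$ setwise, forcing $[G:H]\leqslant 2$ with $H$ normal in $G$, contrary to $H$ being corefree and proper. Maximality of $G_{\{H,Hg\}}=E$ in $G$ then delivers $G$-edge-primitivity. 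Connectedness of $\Gamma$ is the last loose end: $\la H,g\ra\supseteq\la H,E\ra$, which by maximality of $E$ equals $E$ or $G$, and the $E$ option would imply $H\leqslant E$, forcing $H=E$ by $|H|>|A|=|E|/2$ and Lagrange---a contradiction. The delicate step, which I expect to be the main obstacle, is exactly this juggling in the converse direction: pinning down the edge stabiliser, ruling out $E\cap H=E$, and excluding $G$ as the stabiliser all hinge on playing the maximality of $E$ against the corefreeness of $H$.
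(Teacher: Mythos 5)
Your proof is correct and follows essentially the same route as the paper's: both directions go through the coset graph construction, using normality of $A$ in $E$ to get $A\leqslant H\cap H^g$ and the maximality of $E$ to identify the edge stabiliser. The only cosmetic differences are that you rule out $G_{\{H,Hg\}}=G$ via vertex-transitivity and make the step $E\cap H=A$ explicit, where the paper instead notes directly that $g$ cannot normalise the corefree subgroup $H$ and that $G_e=\la H\cap H^g,g\ra\neq G$.
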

\begin{proof}
Suppose first that $G,E,A,H$ and $g$ are as in the statement.
Since $E$ is maximal in $G$ and $H$ is not contained in $E$ it follows
that $E<\la H,g\ra =G$. As $H$ is corefree in $G$ we have that $g$
does not normalise $H$. Let $\Gamma=\Cos(G,H,HgH)$, let $v=H$, $w=Hg$
and $e=\{v,w\}$. Then $\Gamma$ is connected, $G_v=H$, $G_w=H^g$,
$G_{vw}=H\cap H^g$ and $G_{e}=\la H\cap H^g,g\ra\neq G$.
 Since $g$ does not normalise $H$, but does normalise $A$ we have
$A\leqslant H\cap H^g<H$ and so 
$E\leqslant \la H\cap H^g,g\ra=G_e$. The maximality of $E$ implies that
$G_e=E$ and $\Gamma$ is edge-primitive.

Conversely, suppose that $\Gamma$ is a $G$-arc-transitive,
$G$-edge-primitive graph. Let 
$e=\{v,w\}$ be an edge of $\Gamma$. Then $H=G_v$ is corefree in
$G$. Since $G$ is arc-transitive, there exists $g\in G$ such that
$v^g=w$ and $w^g=v$. Moreover, $\Gamma\cong \Cos(G,H,HgH).$ Now
$G_{vw}=H\cap H^g$ which is an index two subgroup of 
$G_e=\la H\cap H^g,g\ra$. Since $G$ is edge-primitive, $E=G_e$ is
maximal in $G$ and $A=H\cap E=G_{vw}$ has index 2 in $E$.
\end{proof}

We also have the following lemma.

\begin{lemma}
\label{lem:spread}
Let $\Gamma=\Cos(G,H,HgH)$. Then for any
subgroup $L<G$ such that $H\cap H^g<L<H$, the graph $\Cos(G,L,LgL)$ is a spread
of $\Gamma$.
\end{lemma}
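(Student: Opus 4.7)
Plan. My plan is to establish three things: that $\Sigma := \Cos(G,L,LgL)$ is a well-defined coset graph, that the natural map $\pi \colon V\Sigma \to V\Gamma$ defined by $Lx \mapsto Hx$ realises $\Gamma$ as a quotient of $\Sigma$ under a $G$-invariant partition, and finally that each arc of $\Gamma$ has a unique preimage arc in $\Sigma$.

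First I would verify the well-definedness of $\Sigma$. Since $g^2 \in H$ by hypothesis on $\Gamma$, and since $g^{-1} g^2 g = g^2 \in H$ shows $g^2 \in H^g$, we conclude $g^2 \in H \cap H^g < L$. Moreover $g \notin N_G(L)$: otherwise $L = L^g \leqslant H^g$, forcing $L \leqslant H \cap H^g$ and contradicting the strict inclusion $H \cap H^g < L$.

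The heart of the proof is the identity
\[
L \cap L^g \;=\; H \cap H^g.
\]
The inclusion $L \cap L^g \leqslant H \cap H^g$ is immediate from $L \leqslant H$. For the reverse inclusion, the key observation is that $H \cap H^g$ is itself $g$-invariant under conjugation: indeed, $(H \cap H^g)^g = H^g \cap H^{g^2} = H^g \cap H$, where the last equality uses $g^2 \in H$ and therefore $H^{g^2} = H$. Combining the hypothesis $H \cap H^g \leqslant L$ with this $g$-invariance gives $H \cap H^g = (H \cap H^g)^g \leqslant L^g$, and hence $H \cap H^g \leqslant L \cap L^g$.

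Finally, to deduce the spread property, I would observe that $\pi$ is $G$-equivariant and sends arcs of $\Sigma$ to arcs of $\Gamma$. Arc-preservation uses $LgL \subseteq HgH$ (so that projected pairs lie in the correct double coset) together with the fact that $g \notin H$ to prevent collapse $Hx = Hy$, since $g \in H$ would force $H^g = H$ and contradict $H \cap H^g \subsetneq H$. Both $\Sigma$ and $\Gamma$ are $G$-arc-transitive with base-arc stabilisers $L \cap L^g$ and $H \cap H^g$ respectively; by the identity above these are equal, so the $G$-equivariant map induced by $\pi$ between the two arc sets is a bijection. This is exactly the statement that each arc of $\Gamma$ lifts uniquely to an arc of $\Sigma$, which is the spread property.

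The main obstacle I anticipate is securing the identity $L \cap L^g = H \cap H^g$: without the observation that $g^2 \in H$ forces $H \cap H^g$ to be $g$-invariant, the reverse inclusion does not go through. Once that key identity is in hand, the remaining verifications — that $\pi$ sends arcs to arcs, and that $\Gamma$ is indeed the quotient of $\Sigma$ by the fibre partition — reduce to routine coset manipulations.
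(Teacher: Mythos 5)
Your proof is correct and rests on the same key fact as the paper's: since $g^2\in H$, the subgroup $H\cap H^g$ is normalised by $g$, and hence the hypothesis $H\cap H^g<L$ forces $L\cap L^g=H\cap H^g$. The paper reaches the spread property by showing that any element of $G$ carrying the base arc $(L,Lg)$ to another arc between the blocks $v^H$ and $(v^H)^g$ lies in $H\cap H^g\leqslant L\cap L^g$ and therefore fixes the base arc, while you phrase the same fact as equality of arc stabilisers making the $G$-equivariant map on arc sets a bijection; the two formulations are interchangeable.
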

\begin{proof}
Let $v$ be the vertex of $\Sigma=\Cos(G,L,LgL)$ corresponding to $L$
and $w$ the vertex adjacent to $v$ corresponding to $Lg$. Then $B=v^H$
is a block of imprimitivity for $G$ on $V\Sigma$ containing $v$ and
the corresponding block containing $w$ is $B^g$. Let
$\mathcal{B}=\{B^k\mid k\in G\}$. Since $\Gamma$ is
$G$-arc-transitive, so is $\Sigma_{\mathcal{B}}$ and $H$ is the
stabiliser of the vertex of $\Sigma_{\mathcal{B}}$ given by the block $B$.
Hence $\Sigma_{\mathcal{B}}= \Cos(G,H,HgH)=\Gamma$.  Now the
stabiliser of the block $B^g$ is $H^g$ and $H\cap H^g<L$. Let $(x,y)$
be an arc of $\Sigma$ with $x\in B$ and $y\in B^g$. Then there exists $h\in G$
mapping $v$ to $x$ and $w$ to $y$. Since $B$ is a block of
imprimitivity, $h\in H\cap H^g<L$ and so $h\in L\cap L^g$. Thus $h$
fixes $v$ and $w$ and so $\{v,w\}$ is the only edge between the blocks
$B$ and $B^g$. Hence $\Sigma$ is a spread of $\Gamma$.
\end{proof}

One easy way of constructing edge-primitive graphs is to look for
novelty maximal subgroups. Given a group $G$ with a normal subgroup $N$,
we say that a maximal subgroup $E$ of $G$ not containing $N$ is a
\emph{novelty} if $E\cap N$ is not maximal in $N$. Thus if
$N$ is an index two subgroup of $G$, every novelty maximal subgroup
$E$ of $G$ gives rise to a $G$-edge-primitive graph with edge
stabiliser $E$, arc stabiliser $A=E\cap N$ and vertex stabiliser $H$,
where $H$ is a proper subgroup of $N$ properly containing $A$. This
phenomenon lies behind Examples \ref{eg:pthyp} and \ref{eg:symp4}. We
also have the following example.

\begin{example}
\label{eg:M12}
{\rm Let $T$ be the Mathieu group $M_{12}$ and $G=\Aut(T)$. From the Atlas
\cite[p 33]{atlas}, $G$ has maximal subgroups $E\cong S_5$ and 
$H\cong \PGL(2,11)$ such that $A=E\cap H\cong A_5$ and 
$H\cap T=\PSL(2,11)$ is a maximal subgroup of $T$. The subgroup $E$ is a novelty maximal. Let 
$g\in E\backslash A$. Then by Proposition \ref{prn:general},
$\Gamma=\Cos(G,H,HgH)$ is $G$-edge-primitive. As $H$ is maximal in $G$
it follows that $G$ acts primitively on $V\Gamma$.
Note that $A\leqslant T$ and so $TA\neq G$. Hence $T$ acts transitively
on vertices and edges but not on arcs. Moreover, as $A$ is
selfnormalising in $T$, we have $A<H\cap T<T$ and $A$ is the
stabiliser in $T$ of an edge. Thus $\Gamma$ is 
$T$-edge-quasiprimitive, but not $T$-edge-primitive. Moreover,
$\Gamma$ is $G$-locally imprimitive and letting $B=H\cap T$, we see
that $\Gamma$ is the quotient graph of the
bipartite graph $\Sigma=\Cos(G,B,BgB)$. The graph $\Sigma$ is
$G$-edge-primitive and $(G,2)$-arc-transitive such that
$G^{E\Sigma}=G^{E\Gamma}$ and is 
$G$-vertex-biquasiprimitive, but not $G$-vertex-biprimitive. There is
a partition $\mathcal{P}$ of $V\Sigma$ into blocks of size two such
that $\Sigma_{\mathcal{P}}=\Gamma$. Each block of $\mathcal{P}$ has
one vertex in each bipartite half of $\Sigma$, and there is at most
one edge between any two blocks.
}%
\end{example}

We have the following general construction of locally imprimitive,
edge-primitive graphs.

\begin{construction}
{\rm Let $E$ be an almost simple primitive permutation group of degree $n$
such that $E$ has an index 2 subgroup $A$ which preserves a nontrivial
partition of the $n$ points into $l$ parts of size $k$. Let 
$H=S_k\Wr S_l$ and $G=S_n$. Suppose that $E$ is a maximal subgroup of
$G$ and let $g\in E\backslash A$. Then by Proposition
\ref{prn:general}, the graph $\Gamma=Cos(G,H,HgH)$ is
$G$-edge-primitive. If $A$ is not maximal in $H$ then $\Gamma$ is
$G$-locally imprimitive.

The requirements for $A$ and $E$ are often satisfied.  
An infinite family of examples is where $E=\Aut(\PSL(d,q))$ for 
$d\geq 3$ and $A=\PGammaL(d,q)$. Let $n=(q^d-1)(q^{d-1}-1)/(q-1)^2$,
the number of point-hyperplane incident pairs. Then by \cite{LPS87},
$E$ is maximal in $G=S_n$. However, $A$ is imprimitive and preserves a
partition of $l=(q^d-1)/(q-1)$ parts of size
$k=(q^{d-1}-1)/(q-1)$. Moreover, $A$ is not maximal in $H=S_l\Wr S_k$
since it is contained in $S_k\Wr \PGammaL(d,q)$. Thus  $\Gamma$ is
$G$-locally imprimitive.  
}%
\end{construction}

\section{Initial Analysis}

We begin by noting the following lemma.
\begin{lemma}
\label{lem:con}
If $\Gamma$ is a disconnected $G$-edge-primitive graph then either
$\Gamma$ is a union of isolated vertices and single edges, or $\Gamma$
is a union of isolated vertices and a connected $G$-edge-primitive graph.
\end{lemma}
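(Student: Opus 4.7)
The plan is to analyze the partition of $E\Gamma$ induced by the connected components of $\Gamma$ and to invoke the edge-primitivity hypothesis to collapse this partition to one of two extreme cases. First, I dispose of the trivial situation $|E\Gamma|\leqslant 1$: such a $\Gamma$ is already a union of isolated vertices with at most one single edge, so both conclusions of the statement hold. From now on I may assume $|E\Gamma|\geqslant 2$, whence $G$ is transitive on $E\Gamma$ (edge-primitivity implies edge-transitivity as soon as there are at least two edges).

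Next I note that since $G\leqslant\Aut(\Gamma)$, the action of $G$ on $V\Gamma$ permutes the connected components of $\Gamma$. Let $\{C_i\mid i\in I\}$ be the collection of those connected components that contain at least one edge. Because $G$ is transitive on $E\Gamma$ and every edge lies in a unique such $C_i$, the group $G$ acts transitively on $\{C_i\}$. The associated partition $\{E(C_i)\mid i\in I\}$ of $E\Gamma$ is then $G$-invariant, and $G$-edge-primitivity forces this partition to be either the trivial partition $\{E\Gamma\}$ or the discrete partition into singletons.

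In the first case, $I$ has a single element, so there is exactly one non-trivial component $C$, and $\Gamma$ is the disjoint union of $C$ with isolated vertices. Since $E(C)=E\Gamma$ and $G$ stabilizes $C$, the full group $G$ acts primitively on $E(C)$, so $C$ is a connected $G$-edge-primitive graph, giving the second alternative of the statement. In the second case, each $E(C_i)$ is a singleton, so each $C_i$ is a connected graph with exactly one edge; hence $C_i\cong K_2$. Then $\Gamma$ is a disjoint union of isolated vertices and single edges, which is the first alternative.

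There is no real obstacle here: the only book-keeping to watch is that one must restrict attention to components containing an edge (otherwise isolated vertices would obstruct transitivity on components), and one must verify that the partition is genuinely $G$-invariant, which follows immediately from the fact that automorphisms permute connected components. The whole argument is essentially the observation that a $G$-invariant partition of $E\Gamma$ coming from the component decomposition must be trivial or discrete by primitivity.
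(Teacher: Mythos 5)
Your proof is correct and follows essentially the same route as the paper: the paper's own argument observes that the edge set of each connected component containing an edge is a block of imprimitivity for $G$ on $E\Gamma$, and then concludes exactly as you do that either every such block is a singleton (giving a union of isolated vertices and single edges) or there is a unique such component (giving a connected $G$-edge-primitive subgraph plus isolated vertices). Your extra book-keeping on the case $|E\Gamma|\leqslant 1$ and on $G$-invariance of the partition is just a more explicit rendering of the same idea.
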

\begin{proof}
Each connected component which contains an edge forms a block of
imprimitivity for $G$ on edges. Thus either each connected component
consists of zero or one edge, or there is a unique connected component with
at least one edge.
\end{proof}

Next we look at vertex-transitivity.
\begin{lemma}
\label{lem:qpfaithful}
Let $\Gamma$ be a connected $G$-edge-quasiprimitive graph. Then either $G$ is
vertex-transitive, or $\Gamma$ is bipartite and $G$ has two orbits on
vertices. Moreover, in the latter case, either $\Gamma$ is a
star or $G$ acts faithfully and quasiprimitively on each of its two
orbits.
\end{lemma}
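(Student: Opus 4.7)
The plan is to first establish the dichotomy about the vertex orbits, and then analyse the kernels of the actions on the two orbits in the non-vertex-transitive case. Since $G$ acts transitively on edges, there is a fixed unordered pair of $G$-orbits $\{\Delta_1,\Delta_2\}$ (possibly with $\Delta_1=\Delta_2$) such that every edge has one endpoint in each $\Delta_i$. If $\Delta_1=\Delta_2$ then every vertex incident to an edge lies in this common orbit, and since $\Gamma$ is connected with at least one edge this forces $V\Gamma=\Delta_1$, i.e. $G$ is vertex-transitive. Otherwise the two orbits together form a bipartition, and connectedness rules out any further $G$-orbits on vertices, so $G$ has exactly two vertex orbits.

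Now assume $G$ is vertex-intransitive and let $K_i$ be the kernel of the action of $G$ on $\Delta_i$. Then $K_i\trianglelefteq G$, so by edge-quasiprimitivity $K_i$ acts either trivially or transitively on $E\Gamma$. I would then analyse these two cases for $K_1$ (and symmetrically for $K_2$). In the trivial case, for each edge $\{u,v\}$ with $u\in\Delta_1$, each $k\in K_1$ fixes $u$ and hence fixes $v$, so $K_1$ fixes every vertex of $\Delta_2$ that has a neighbour; connectedness then yields $K_1=1$ (using that $G$ acts faithfully on $V\Gamma$). In the transitive case, note that for $k\in K_1$ and any edge $\{u,v\}$ with $u\in\Delta_1$, the image $k\cdot\{u,v\}=\{u,k(v)\}$ is still incident to $u$; so each $K_1$-orbit on edges is confined to the edges through a single vertex of $\Delta_1$, and transitivity on $E\Gamma$ forces $|\Delta_1|=1$.

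The step I expect to be most delicate is this transitive-kernel case, and in particular deducing that $\Gamma$ is then a star. Once $|\Delta_1|=1$ is established, the unique vertex $u\in\Delta_1$ is fixed by all of $G$, so its neighbourhood in $\Delta_2$ is $G$-invariant; since $G$ is transitive on $\Delta_2$ and $u$ has at least one neighbour, this neighbourhood equals $\Delta_2$, and hence $\Gamma=K_{1,|\Delta_2|}$. Thus, unless $\Gamma$ is a star, both $K_1$ and $K_2$ are trivial and $G$ acts faithfully on each of $\Delta_1$ and $\Delta_2$.

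Finally, for quasiprimitivity, take any nontrivial normal subgroup $N$ of $G$ (in its faithful action on $\Delta_1$). Then $N$ is nontrivial in $G$, so by edge-quasiprimitivity $N$ is transitive on $E\Gamma$; projecting an $N$-orbit of edges to its $\Delta_1$-endpoints, and using that every vertex of $\Delta_1$ is incident to some edge, shows that $N$ is transitive on $\Delta_1$. The same argument on $\Delta_2$ completes the proof.
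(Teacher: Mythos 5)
Your proof is correct and follows essentially the same route as the paper: use edge-transitivity to get the two-orbit dichotomy, then apply edge-quasiprimitivity to the kernels of the actions on the two orbits together with connectedness to conclude that a nontrivial kernel forces a star, and that any nontrivial normal subgroup is transitive on each orbit. Your write-up is just a more detailed unpacking of the paper's argument.
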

\begin{proof}
Since $G$ is edge-transitive, either $G$ is vertex-transitive or
$\Gamma$ is bipartite and the two orbits $\Delta_1$, $\Delta_2$ of $G$ on $V\Gamma$
are the two parts of the bipartition. Suppose that we are in the
latter case and let $N$ be a nontrivial normal subgroup of $G$. Then
$N$ acts transitively on $E\Gamma$ and so, since $\Gamma$ is
connected, $N$ acts transitively on both $\Delta_1$ and
$\Delta_2$. Thus either $|\Delta_1|=1$ and $\Gamma$ is a star, or $G$
acts faithfully and quasiprimitively on each of its two
orbits.
\end{proof}

In the edge-primitive case things are more restricted.
\begin{lemma}
\label{lem:transitive}
Let $\Gamma$ be a connected $G$-edge-primitive graph. Then either
$\Gamma$ is a star or $G$ is vertex-transitive.  
\end{lemma}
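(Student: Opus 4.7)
The plan is to combine Lemma \ref{lem:qpfaithful} with a direct partition-of-edges argument. Since edge-primitivity implies edge-quasiprimitivity, Lemma \ref{lem:qpfaithful} already tells us that either $G$ is vertex-transitive (done), or $\Gamma$ is bipartite with the two $G$-orbits $\Delta_1,\Delta_2$ forming its bipartition, and either $\Gamma$ is a star (done) or $G$ acts faithfully and quasiprimitively on each $\Delta_i$. So it suffices to rule out this last sub-case by producing a nontrivial $G$-invariant partition of $E\Gamma$.

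The natural candidate is $\P=\{E_v:v\in\Delta_1\}$, where $E_v$ denotes the set of edges incident with $v$. This is a partition of $E\Gamma$ because every edge has exactly one endpoint in $\Delta_1$, and it is $G$-invariant because $g\cdot E_v=E_{g(v)}$ for every $g\in G$. By $G$-transitivity on $\Delta_1$, the common value $k:=|E_v|$ equals the (constant) valency of vertices in $\Delta_1$.

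For $\P$ to be trivial, one of two things must happen. If $|\Delta_1|=1$, then every edge of $\Gamma$ contains the unique vertex $v$ of $\Delta_1$, so $\Gamma$ is a star. If instead $k=1$, then every vertex in $\Delta_1$ has exactly one neighbour in $\Delta_2$; combined with connectedness and bipartiteness this forces $|\Delta_2|=1$ (otherwise two vertices of $\Delta_2$ could only be joined via a path through $\Delta_1$, impossible when $k=1$), so $\Gamma$ is again a star. In the remaining sub-case, $|\Delta_1|\geq 2$ and $k\geq 2$, so $\P$ is a nontrivial $G$-invariant partition of $E\Gamma$, contradicting $G$-edge-primitivity.

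There is essentially no hard step here; the only mild subtlety is the verification that ``all $\Delta_1$-vertices have valency $1$'' together with connectedness forces $\Gamma$ to be a single star, which is handled by the short argument above. The same argument of course works symmetrically with $\Delta_2$ in place of $\Delta_1$, but one side suffices.
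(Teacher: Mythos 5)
Your proof is correct and is essentially the paper's argument: the paper also takes the set $B=\{\{v,w\}\mid w\in\Gamma(v)\}$ of edges through a fixed $v\in\Delta_1$ as a block of imprimitivity for $G$ on edges, concludes that either $|\Gamma(v)|=1$ or $\Delta_1=\{v\}$, and then uses connectedness to deduce that $\Gamma$ is a star. Your version merely packages the blocks into the full partition $\P$ and spells out the connectedness step slightly more explicitly.
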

\begin{proof}
Suppose that $G$ is vertex-intransitive. Then as $G$ is edge-transitive,
$\Gamma$ is a bipartite graph with the orbits of $G$ being the two
bipartite halves $\Delta_1$ and $\Delta_2$. Let $v \in \Delta_1$ and
$B=\{ \{v,w\}\mid w \in \Gamma(v)\}$.   Then $B$ forms a 
block of
imprimitivity for $G$ on edges. Thus either $|\Gamma(v)|=1$ or
$\Delta_1=\{v\}$. Since $\Gamma$ is connected, it follows that
$\Gamma$ is a star. 
\end{proof}

We can now show that all nontrivial edge-primitive graphs are
arc-transitive.

\begin{lemma}
\label{lem:arctrans}
Let $\Gamma$ be a connected $G$-edge-primitive graph. Then one of the
following holds:
\begin{enumerate}
\item $\Gamma$ is a star; 
\item $\Gamma$ is a cycle of prime length $p$, and $G$ is a cyclic group
 of order $p$;
\item $\Gamma$ is $G$-arc-transitive. 
\end{enumerate}
\end{lemma}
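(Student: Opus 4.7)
The plan is to invoke Lemma \ref{lem:transitive} to reduce to the vertex-transitive case and then analyse $G$'s action on arcs. By Lemma \ref{lem:transitive}, either $\Gamma$ is a star (yielding conclusion (1)) or $G$ is vertex-transitive; assume the latter. Since $G$ is edge-transitive and each edge has exactly two arcs, $G$ has at most two orbits on the set of arcs. If there is only one orbit, then $G$ is arc-transitive, giving conclusion (3).

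The heart of the argument is the remaining case, where $G$ has two orbits $A_1, A_2$ on arcs, interchanged by arc-reversal, so that for every edge $\{v,w\}$ the arcs $(v,w)$ and $(w,v)$ lie in different orbits. For each vertex $v$ I set
\[
S_v = \{\,\{v,w\} \in E\Gamma : (v,w) \in A_1\,\}.
\]
I will verify that $\mathcal{P} = \{S_v : v \in V\Gamma\}$ is a $G$-invariant partition of $E\Gamma$: every edge belongs to exactly one $S_v$, since $(v,w)$ and $(w,v)$ lie in distinct orbits; any two $S_v, S_{v'}$ sharing an edge must coincide with $v=v'$ (otherwise arc-reversal would place the shared arc into both orbits); vertex-transitivity together with $A_1 \neq \emptyset$ ensures every $S_v$ is nonempty; and $G$-invariance of $A_1$ yields $G$-invariance of $\mathcal{P}$.

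Because $\Gamma$ has at least one edge and $|V\Gamma| \geq 2$, the partition $\mathcal{P}$ has at least two parts, so edge-primitivity forces every part to be a singleton. Hence $|S_v|=1$ for all $v$, so $\Gamma$ has valency $2$; connectedness then forces $\Gamma \cong C_n$ for some $n \geq 3$.

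Finally, $G \leqslant \Aut(C_n) = D_{2n}$. Vertex-transitivity forces $G$ to contain the full rotation subgroup $C_n$, and if $G$ also contained a reflection then $G = D_{2n}$, which acts arc-transitively on $C_n$ — contradicting the two-orbit assumption. Therefore $G = C_n$ acts regularly on the $n$ edges, and such a regular cyclic action is primitive if and only if $n$ is prime, yielding conclusion (2). The main obstacle I anticipate is the careful verification that $\mathcal{P}$ is genuinely a non-trivial $G$-invariant partition of edges; once that is in place, the rest reduces to a short inspection of subgroups of $D_{2n}$.
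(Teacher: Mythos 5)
Your proof is correct, but it takes a genuinely different route from the paper's. After the common first step (Lemma \ref{lem:transitive} disposes of the star case), the paper argues purely group-theoretically: if $G$ is vertex- but not arc-transitive then $G_e=G_{vw}=G_v\cap G_w$, and since edge-primitivity makes $G_e$ maximal in $G$ while $G_v\neq G$, one gets $G_v=G_w$ for every pair of adjacent vertices; connectedness then forces $G_v=1$, so $G$ is regular on vertices and edges, whence $|V\Gamma|=|E\Gamma|$, $\Gamma$ is a cycle, and primitivity of the regular edge action gives prime length and $G$ cyclic. You instead use edge-primitivity combinatorially: the two paired arc-orbits orient the edges, the sets $S_v$ form a $G$-invariant partition of $E\Gamma$ with $|V\Gamma|\geq 2$ parts, and primitivity collapses every part to a singleton, again yielding $|E\Gamma|=|V\Gamma|$ and valency $2$. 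Both arguments are sound and of comparable length; the paper's version hands you $G_v=1$ for free and so avoids your final inspection of subgroups of $D_{2n}$. That endgame is the one place you should tighten: the bare claim that vertex-transitivity forces $C_n\leqslant G$ is false for even $n$ (an order-$n$ dihedral subgroup generated by the square of a rotation and an edge-reflection is vertex-transitive on $C_n$ without containing $C_n$); you need edge-transitivity to exclude it, since such a subgroup has order $n=|E\Gamma|$ but contains a reflection fixing an edge. Once that is said, $G$ contains $C_n$, cannot be all of $D_{2n}$ (which is arc-transitive), and the conclusion follows as you state. You might also spell out why the two arc-orbits are swapped by arc-reversal (if $(v,w)$ and $(w,v)$ lay in one orbit, edge-transitivity would put all arcs in that orbit), but this is expository rather than a gap.
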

\begin{proof}
By  Lemma \ref{lem:transitive}, either case $(1)$ holds or $G$ is
vertex-transitive. Suppose now that $G$ is vertex-transitive but not
arc-transitive. Then for an edge $e=\{v,w\}$ we have 
$G_e=G_{vw}=G_v \cap G_{w}$. However, as $G$ acts primitively on
edges, $G_e$ is a maximal subgroup of $G$. Thus $G_v=G_{w}$ for every
pair of adjacent vertices.  But $\Gamma$ is connected, and so $G_v$
fixes every vertex of $\Gamma$. This implies that $G_v=1=G_e$ and so $G$
acts regularly on vertices and on edges. Thus $\Gamma$ has the
same number of edges as vertices and so the connectivity of $\Gamma$ implies that it is a cycle. Furthermore, as
$G$ is primitive on edges this cycle has a prime number of edges and
hence vertices. Moreover,  as $G$ is not arc-transitive, $G$ is
cyclic. Thus either case $(2)$ or $(3)$ holds. 
\end{proof}

Lemma \ref{lem:arctrans} does not hold for $G$-edge-quasiprimitive
graphs. In particular, the graph $\Gamma$ in Example \ref{eg:M12} is
$T$-edge-quasiprimitive, $T$-vertex-transitive but not
$T$-arc-transitive.

Next we look at the action of $G$ on vertices.

\begin{lemma}
\label{lem:qpbiqp}
Let $\Gamma$ be a connected $G$-vertex transitive, $G$-edge-quasiprimitive
graph. Then $G$ is either quasiprimitive or biquasiprimitive on the set
of vertices of $\Gamma$.
\end{lemma}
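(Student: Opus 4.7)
The plan is to show that if $G$ fails to be quasiprimitive on vertices, then in fact every nontrivial normal subgroup of $G$ has at most two orbits on vertices, which is exactly biquasiprimitivity.

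First I would suppose for contradiction (or rather, in the contrapositive) that $G$ is not vertex-quasiprimitive. Then there is a nontrivial normal subgroup $N\trianglelefteq G$ which is not transitive on $V\Gamma$. Since $\Gamma$ is $G$-edge-quasiprimitive, every nontrivial normal subgroup of $G$ is transitive on $E\Gamma$, so $N$ is edge-transitive. The first key observation is that a group acting transitively on the edges of a connected graph but intransitively on vertices must have exactly two vertex orbits which form a bipartition of the graph: indeed each edge has its two endpoints in different $N$-orbits (otherwise edge-transitivity would force every edge to lie inside a single orbit, contradicting connectedness unless $|V\Gamma|=1$), and by connectedness there can be no more than two orbits. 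Hence $\Gamma$ is bipartite and the two $N$-orbits are the bipartite halves $\Delta_1,\Delta_2$.

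Next I would use the uniqueness of the bipartition of a connected bipartite graph. Let $M$ be any nontrivial normal subgroup of $G$. If $M$ is vertex-transitive there is nothing to show. Otherwise, by the same argument applied to $M$ in place of $N$ (using that $M$ is edge-transitive by edge-quasiprimitivity), $\Gamma$ is bipartite with $M$-orbits being its two bipartite halves. Since $\Gamma$ is connected, its bipartition is unique, so the $M$-orbits coincide with $\{\Delta_1,\Delta_2\}$; in particular $M$ has exactly two orbits on $V\Gamma$.

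Combining these, every nontrivial normal subgroup of $G$ has either one or two orbits on $V\Gamma$, and at least one (namely $N$) has two; together with $G$-vertex-transitivity this is precisely the definition of $G$ being biquasiprimitive on $V\Gamma$. The only mildly delicate step is the initial observation that an edge-transitive, vertex-intransitive subgroup of the automorphism group of a connected graph yields a bipartition with the orbits as halves; once that standard fact is in hand, the rest is a direct bookkeeping argument using $G$-edge-quasiprimitivity.
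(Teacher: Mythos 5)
Your proposal is correct and follows essentially the same route as the paper: by edge-quasiprimitivity every nontrivial normal subgroup is edge-transitive, hence (by connectedness) either vertex-transitive or has exactly two vertex orbits forming a bipartition, which gives quasiprimitivity or biquasiprimitivity. The extra remarks about the uniqueness of the bipartition are harmless but not needed, since biquasiprimitivity only requires that each nontrivial normal subgroup have at most two orbits.
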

\begin{proof}
Let $N$ be a nontrivial normal subgroup of $G$. Then $N$ is transitive
on edges and so is either transitive on vertices or $\Gamma$ is
bipartite and $N$ has two orbits on the vertex set. Thus $G$ is either
quasiprimitive or biquasiprimitive on $V\Gamma$.
\end{proof}

In the edge-primitive case we can actually reduce to the situation
where $G$ is either primitive or biprimitive on vertices.
\begin{proof}{\bf (of Theorem \ref{thm:main})}
By Lemma \ref{lem:arctrans} $G$ is arc-transitive.
Suppose that $G$ is neither primitive nor
biprimitive on $V\Gamma$. Then there exists a $G$-invariant partition
$\mathcal{B}$ of $V\Gamma$ with at least three parts. Since $\Gamma$
is connected and edge-transitive, the edges of $\Gamma$ occur between
the parts of $\mathcal{B}$, that is, there are no edges within
parts. Let $\Gamma_{\mathcal{B}}$ be the quotient graph of $\Gamma$
with respect to the partition $\mathcal{B}$. Given
$B_1,B_2\in\mathcal{B}$ which are adjacent in $\Gamma_{\mathcal{B}}$,
the set of edges of $\Gamma$ between vertices of $B_1$ and vertices of
$B_2$ forms a block of imprimitivity for $G$. Hence there is a unique
edge in $\Gamma$ between vertices of $B_1$ and vertices of $B_2$. Thus
$\Gamma$ is a spread of $\Gamma_{\mathcal{B}}$ and
$G^{E\Gamma}\cong G^{E\Gamma_{\mathcal{B}}}$. Moreover, if $g\in G$ fixes
each part of $\mathcal{B}$, then $g$ fixes each edge of $\Gamma$. Thus
$G$ acts faithfully on $\mathcal{B}$.  Moreover, by choosing
$\mathcal{B}$ to be a maximal $G$-invariant partition with at least
three parts, $G$ is either primitive or biprimitive on the set of
vertices of $V\Gamma_{\mathcal{B}}$. Let $v\in B_1$ and $w\in B_2$ be
the unique pair of adjacent vertices in $B_1\cup B_2$. Then
$G_{B_1B_2}=G_{vw}<G_v<G_{B_1}$, since $G$ is arc-transitive and
$|B_1|>1$. Hence $\Gamma_{\mathcal{B}}$ is $G$-locally imprimitive. 

Conversely, suppose that $\Sigma$ is a $G$-edge-primitive, $G$-locally
imprimitive graph. Let $\{\alpha,\beta\}\in E\Sigma$. Then there exists
a subgroup $H$ such that $G_{\alpha\beta}<H<G_{\alpha}$. Since $G$ is
arc-transitive, there exists $g\in G$ such that $g$ interchanges
$\alpha$ and $\beta$.  Thus $H\cap H^g\leqslant G_{\alpha}\cap
G_{\beta}$, but since $g$ normalises $G_{\alpha\beta}$ we have 
$H\cap H^g=G_{\alpha\beta}$. Moreover, $g^2\in G_{\alpha\beta}\leqslant H$. 
Thus we can define the graph
$\Gamma=Cos(G,H,HgH)$. Let $v$ be the vertex of $\Gamma$ given by $H$
and $w$ the vertex given by the coset $Hg$. Then $e=\{v,w\}$ is an
edge and $G_e=\la H^g\cap H,g\ra=\la
G_{\alpha\beta},g\ra=G_{\{\alpha,\beta\}}$. Hence 
$G^{E\Gamma}\cong G^{E\Sigma}$ and so $\Gamma$ is
$G$-edge-primitive. Since $H=G_v<G_{\alpha}<G$, it follows that
$B_1=v^{G_{\alpha}}$ is a block of imprimitivity for $G$ on $V\Gamma$. Let
$\mathcal{B}$ be the corresponding system of imprimitivity. Now
$v^{G_{\alpha}g}=v^{gg^{-1}G_{\alpha}g}=w^{G_{\beta}}$ and so
$B_2=w^{G_{\beta}}$ is the block of $\mathcal{B}$ containing $w$.
Moreover, $(v,w)$ is the
unique edge between the two blocks $B_1$ and $B_2$.  Then as
$G_{\alpha}=G_{B_1}$ and $g$ interchanges the edge $\{B_1,B_2\}$ of the
quotient graph $\Gamma_{\mathcal{B}}$ we have that $\Gamma_{\mathcal{B}}\cong
Cos(G,G_{\alpha},G_{\alpha}gG_{\alpha})\cong \Sigma$. 
\end{proof}

We also have the following lemma in the vertex-biquasiprimitive case.
\begin{lemma}
\label{lem:faithful}
Let $\Gamma$ be a $G$-vertex-biquasiprimitive graph which is not
complete bipartite. Then $G^+$ is faithful on each orbit.
\end{lemma}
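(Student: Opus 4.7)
Let $\Delta_1,\Delta_2$ be the two orbits of $G$ on $V\Gamma$, and for $i=1,2$ let $K_i$ denote the kernel of the action of $G^+$ on $\Delta_i$. The plan is to show that the assumption $K_1\neq 1$ forces $\Gamma$ to be complete bipartite, contradicting the hypothesis; by symmetry this will give $K_1=K_2=1$.

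The first step is to assemble $K_1$ and $K_2$ into a single normal subgroup of $G$. Each $K_i$ is plainly normal in $G^+$. Since any $g\in G\setminus G^+$ swaps $\Delta_1$ and $\Delta_2$, conjugation by $g$ sends a pointwise stabiliser of $\Delta_1$ to a pointwise stabiliser of $\Delta_2$, so $g K_1 g^{-1}=K_2$. Hence $N:=K_1K_2\leqslant G^+$ is normalised by $G$; and if $K_1\neq 1$ then $N\neq 1$.

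Next, I would apply biquasiprimitivity: as $N$ is a nontrivial normal subgroup of $G$ contained in $G^+$, it preserves the bipartition and has at most two orbits on $V\Gamma$, so it must be transitive on each of $\Delta_1$ and $\Delta_2$. Since $K_1$ fixes $\Delta_1$ pointwise, the transitive action of $N$ on $\Delta_1$ is induced by $K_2$ alone; similarly $K_1$ is transitive on $\Delta_2$. Now pick any edge $\{v,w\}$ with $v\in\Delta_1$, $w\in\Delta_2$. For any $w'\in\Delta_2$, pick $k\in K_1$ with $w^k=w'$; because $k$ fixes $v$, the pair $\{v,w'\}=\{v,w\}^k$ is an edge, so $v$ is adjacent to every vertex of $\Delta_2$. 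Using transitivity of $K_2$ on $\Delta_1$ in the same way shows every vertex of $\Delta_1$ is adjacent to every vertex of $\Delta_2$, so $\Gamma$ is complete bipartite, contradicting the hypothesis.

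The only subtle point, and where I would be most careful, is the construction of $N=K_1K_2$ as a $G$-normal subgroup: one must observe both that $K_2=gK_1g^{-1}$ for $g\in G\setminus G^+$ (so $N$ is $G$-invariant, not merely $G^+$-invariant) and that $N\leqslant G^+$ (so that biquasiprimitivity is being applied to a subgroup that actually preserves the two halves, allowing us to deduce transitivity on each half rather than a single orbit of size $|V\Gamma|$). Everything else is a short transitivity argument.
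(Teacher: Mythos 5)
Your proof is correct and follows essentially the same route as the paper's: form the kernels $K_1,K_2$, observe they are swapped by any $g\in G\setminus G^+$ so that $K_1K_2$ is a nontrivial normal subgroup of $G$, invoke biquasiprimitivity to get transitivity on each half, and use the pointwise-fixing of $K_1$ on $\Delta_1$ together with its transitivity on $\Delta_2$ to force $\Gamma$ to be complete bipartite. The only cosmetic slip is calling $\Delta_1,\Delta_2$ the orbits of $G$ rather than of $G^+$ ($G$ itself is vertex-transitive here); the argument is otherwise exactly the paper's.
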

\begin{proof}
Let $\Delta_1$ and $\Delta_2$ be the two orbits of $G^+$ on vertices
and suppose that $G^+$ is unfaithful on $\Delta_1$.  Let $K_1$ be the
kernel of the action of $G^+$ on $\Delta_1$ and $K_2$ be the kernel of
the action of $G^+$ on $\Delta_2$. Then as $G$ is
vertex-transitive, there exists $g\in G$ such that
$K_1^g=K_2$. Moreover, $1\neq K_1\times K_2\norml G$. Since $G$ is
vertex-biquasiprimitive, it follows that $K_1$ is transitive on
$\Delta_2$ and $K_2$ is transitive on $\Delta_1$. Since $K_1$ fixes
each vertex in $\Delta_1$, we have that each vertex of $\Delta_1$ is
adjacent to each vertex of $\Delta_2$. Thus $\Gamma$ is complete
bipartite.
\end{proof}

We can determine all $n$ and $G$ such that $K_{n,n}$ is
$G$-edge-primitive and $G^+$ acts faithfully on each bipartite half. 
\begin{theorem}
\label{thm:compbipartite}
Let $\Gamma=K_{n,n}$ be a $G$-edge-primitive graph. Then one of the
following holds: 
\begin{enumerate}
\item $n=6^k$ and $\soc(G^+)=A_6^k$.
\item $n=12^k$ and $\soc(G^+)=M_{12}^k$.
\item $n=(q^2(q^2-1)/2)^k$ and $\soc(G^+)=\PSp(4,q)$ with $q$ even.
\item There exists a primitive group $H$ of degree $n$ with a transitive
  but not regular
  normal subgroup $K$ and automorphism $\phi$ such that 
  $G^+=\{(hk_1,h^{\phi}k_2)\mid k_1,k_2\in K, h\in H\}$, and 
  $(g,1_H)(1,2)\in G$ for some $g\in H$ interchanges the two $G^+$ orbits where $\phi^2$
  is conjugation by $g$. 
\end{enumerate}
\end{theorem}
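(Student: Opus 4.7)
My plan is to analyse $G^+$, the index-$2$ subgroup of $G$ preserving each bipartite half, as a subdirect product inside $\Sym(\Delta_1)\times\Sym(\Delta_2)$ via Goursat's lemma, and then impose edge-primitivity of $G$ together with the existence of a swap element $\sigma\in G\setminus G^+$ to pin down its structure. By Lemma~\ref{lem:arctrans}, $G$ is arc-transitive on $K_{n,n}$, so $G^+$ is transitive on each $\Delta_i$ of size $n$. Let $K_i=\ker(G^+\to\Sym(\Delta_i))$; vertex-faithfulness of $G$ forces $K_1\cap K_2=1$, while $K_1^\sigma=K_2$. Identify both halves with a common set $\Omega$ of size $n$, choosing the bijections so that $\sigma$ acts on $\Omega\times\Omega$ as a coordinate swap twisted by an element of $H\times 1$. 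Then $G^+$ sits as a subdirect product in $H_1\times H_2$ where $H_i=\pi_i(G^+)$, and $\sigma$-conjugation identifies $H_1\cong H_2=:H$ and carries $\pi_1(K_2)\triangleleft H_1$ to $K:=\pi_2(K_1)\triangleleft H_2$ via this identification.

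Goursat's lemma now provides an isomorphism $\bar\phi\in\Aut(H/K)$ and, after lifting to some $\phi\in\Aut(H)$, presents $G^+=\{(hk_1,h^\phi k_2):h\in H,\,k_1,k_2\in K\}$. Edge-primitivity of $G$ on $\Omega\times\Omega$ constrains this structure: any $H$-invariant partition of $\Omega$ would lift to a $G$-invariant partition of edges, so $H$ must be primitive on $\Omega$; and the orbits of the normal subgroup $K\times K$ of $G^+$ on $\Omega\times\Omega$ would yield a nontrivial $G$-invariant partition unless $K$ is either trivial, or transitive and non-regular on $\Omega$. Writing the swap element as $\sigma=(g,1_H)(1,2)$ for some $g\in H$ and demanding $\sigma G^+\sigma^{-1}=G^+$, a direct calculation with a general element $(hk_1,h^\phi k_2)$ shows that $h^{\phi^2}$ and $ghg^{-1}$ agree modulo $K$ for every $h\in H$; after adjusting the lift $\phi$ we arrange $h^{\phi^2}=ghg^{-1}$ exactly. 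When $K$ is transitive and non-regular this delivers conclusion~(4).

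It remains to treat $K=1$: here $G^+=\{(h,h^\phi):h\in H\}$ is the graph of an isomorphism $\phi$ between two faithful transitive actions of $H$ on $\Omega$. Edge-primitivity of $G$ on $\Omega\times\Omega$ now requires $H$ to be $2$-transitive on $\Omega$, and the two actions to be inequivalent and swapped by $\phi$; otherwise a proper $H$-orbital on $\Omega\times\Omega$ would produce a nontrivial $G^+$-orbit, hence a nontrivial $G$-block on edges. By the CFSG classification of $2$-transitive groups together with inspection of their outer automorphism groups, the simple socles admitting an outer automorphism that interchanges two inequivalent faithful $2$-transitive representations of the same degree are precisely $A_6$, $M_{12}$, and $\PSp(4,q)$ with $q$ even, of degrees $6$, $12$, and $q^2(q^2-1)/2$ respectively. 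Promoting to socle $T^k$ via a product-action extension on $\Omega^k$ then yields Cases~(1)--(3). The main obstacle is this last step: the CFSG-based enumeration of $2$-transitive groups with the required outer-automorphism structure, together with verifying that in the subdirect-product case the edge-primitivity of $G$ genuinely forces $H$ primitive and $K$ transitive and non-regular.
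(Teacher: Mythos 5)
Your overall skeleton is the same as the paper's: split according to whether $G^+$ is faithful on the bipartite halves, show $G^+$ is primitive on each half, and in the unfaithful case extract the subdirect-product description of conclusion (4). However, your treatment of the faithful case ($K=1$) contains a genuine gap. Transitivity of $G^+$ on $E\Gamma=\Delta_1\times\Delta_2$ does \emph{not} force $H$ to be $2$-transitive on $\Omega$: what it gives is the factorisation $G^+=G_vG_w$ (equivalently $H=H_\alpha H'_\beta$) with $G_v$ and $G_w$ maximal and conjugate by an element $g$ with $g^2\in G^+$. This is strictly weaker than $2$-transitivity, and indeed the $\PSp(4,q)$ examples in conclusion (3) are \emph{not} $2$-transitive of degree $q^2(q^2-1)/2$ for $q>2$ (e.g.\ for $q=4$ one has $n(n-1)=120\cdot 119\nmid|\PSp(4,4)|$), so your criterion would exclude them; conversely it would wrongly admit, say, $\PSL(d,2)$ with its two $2$-transitive actions on points and hyperplanes swapped by the graph automorphism, where no factorisation $G^+=G_vG_w$ exists and the resulting $G$ is not even edge-transitive. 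The correct tool here is Baumeister's classification of factorisations of primitive groups with conjugate maximal factors, which yields $\AGL(3,2)\Wr K$ and the socles $\POmega^+(8,q)$, $\PSp(4,q)$ ($q>2$ even), $A_6$, $M_{12}$; one must then do further work to eliminate $\AGL(3,2)\Wr K$ and $\POmega^+(8,q)$ by showing the edge stabiliser $G_{\{v,w\}}$ fails to be maximal (the orthogonal case resting on Kleidman's determination of the maximal subgroups of $\POmega^+_8(q)$ and the fact that $g$ cannot induce triality). None of this is addressed by an appeal to the list of $2$-transitive groups.

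A smaller but still real gap is your justification that $K$ is transitive \emph{and not regular} in case (4). The orbit argument for $K_1\times K_2\norml G$ only forces transitivity; it says nothing once $K$ is transitive. The paper excludes regular $K$ by a separate maximality argument: if $K$ is regular then $H=K\rtimes H_v$ and $G_e=\la\{(h,h^{\phi})\mid h\in H_v\},(g,1_H)(1,2)\ra$ is properly contained in $\la\{(h,h^{\phi})\mid h\in H\},(g,1_H)(1,2)\ra<G$, contradicting maximality of $G_e$. You need some argument of this kind; the block system violating primitivity is not an orbit partition of $K_1\times K_2$.
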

\begin{proof}
Let $\Delta_1$ and $\Delta_2$ be the two bipartite halves of $\Gamma$. 
Suppose that $G^+$ is imprimitive on $\Delta_1$ and let
$\mathcal{P}_1$ be a system of imprimitivity for $G^+$ on $\Delta_1$.  Then there exists a
system of imprimitivity $\mathcal{P}_2$ of $G^+$ on $\Delta_2$ such
that $\mathcal{P}_2=\mathcal{P}_1^g$ for all $g\in G\backslash G^+$.
Let $B_1\in \mathcal{P}_1$ and $B_2\in\mathcal{P}_2$. Then
$C=\{(v,w)\mid v\in B_1,w\in B_2\}$ is a block of imprimitivity for
$G$ on $E\Gamma$. Hence $G^+$ is primitive on each bipartite half.  

Let $v\in \Delta_1$ and $w\in \Delta_2$. By Lemma \ref{lem:arctrans},
$G$ is arc-transitive. Thus $G_v$ is transitive on $\Delta_2$ and so
$G^+=G_vG_w$. Suppose first that $G^+$ is faithful on $\Delta_1$ and
$\Delta_2$. Since $G_w=G_v^g$ for some $g\in G$ with $g^2\in G^+$,
it follows that $G^+, G_v$ and $G_w$ are determined by 
\cite[Theorem 1.1]{baumeister}. Either
$G=\AGL(3,2)\Wr K$ for some transitive subgroup $K$ of $S_k$, or
$\soc(G^+)=T^k$ where $T$ is one of $\POmega^+(8,q)$, $\PSp(4,q)$ $q>2$
even, $A_6$ or $M_{12}$. 

If $G=\AGL(3,2)\Wr K$, then 
$G_{\{v,w\}}=\la (C_7\rtimes C_3) \Wr K, (\alpha,\ldots,\alpha)\ra$
where $\alpha$ is an automorphism of $\AGL(3,2)$ interchanging the two
conjugacy classes of complements of $C_2^3$. Hence 
$G_{\{v,w\}}<C_2^{3k}\rtimes G_{\{v,w\}}<G$ and so $G$ is not
edge-primitive. 

If $N=\soc(G^+)=\POmega^+(8,q)^k$ then $G^+\leqslant H^k$ where $H$ is
an extension of $\POmega^+(8,q)$ by field automorphisms, 
$N_v=\POmega(7,q)$, $N_{vw}=G_2(q)$ and $n=q^4(q^4-1)$ 
\cite[Theorem 1.1]{baumeister}.  Since
$N_v^g=N_w$ for some $g\in G\backslash G^+$ such that $g^2\in G^+$, it
follows that $g$ does not induce a triality automorphism of
$\POmega^+(8,q)$. Hence by \cite{O8+}, $G_{\{vw\}}$ is not maximal in
$G$, and so $G$ is not edge-primitive. Thus $\soc(G^+)$ and $n$ are as
listed in the statement of the theorem.

Suppose next that $G^+$ is unfaithful on $\Delta_1$ and $\Delta_2$.
Let $K_1$ be the kernel of the action of $G^+$ on $\Delta_1$ and $K_2$
be the kernel of the action of $G^+$ on $\Delta_2$. Then 
$K_1\times K_2\norml G$ and so is transitive on $E\Gamma$. Hence $K_1$
acts faithfully and transitively on $\Delta_2$ and $K_2$
acts transitively and faithfully on $\Delta_1$. Let
$H=(G^+)^{\Delta_1}$ and $K=(K_2)^{\Delta_1}$. Then $H$ is a primitive
permutation group with transitive normal subgroup $K$.  Now
$G\leqslant H\Wr S_2$ and $G=\la G^+, (g,1_H)(1,2)\ra$ for some 
$g\in H$. Then $K_2=\{(1_H,k)\mid k\in K\}$ and
$K_1=K_2^{(g,1_H)(1,2)}=\{(k,1_H)\mid k\in K\}$. Furthermore, there exists an
automorphism $\phi$ of $H$ such that 
$G^+=\{(hk_1,h^{\phi}k_2)\mid h\in H,k_1,k_2\in K\}$. Since
$(g,1_H)(1,2)$ normalises $G^+$ it follows that $\phi^2$ is conjugation
by $g$. If $K$ is regular then $H=K\rtimes H_v$ and so $G^+=
\la K\times K\ra \rtimes \{(h,h^{\phi})\mid h\in H_v\}$. Moreover,
$G_e=\la \{(h,h^{\phi})\mid h\in H_v\},(g,1_H)(1,2)\ra <\la
\{(h,h^{\phi})\mid h\in H\},(g,1_H)(1,2)\ra<G$, contradicting $G_e$ being
  maximal in $G$. Thus $K$ is not regular.
\end{proof}

\section{Primitive and quasiprimitive types}
\label{sec:prelim}

In this section we describe the subdivision of primitive and
quasiprimitive groups into 8 types given in \cite{praegerbcc}.  This
description is in terms of the action of the minimal normal
subgroups. If $N$ is a minimal normal subgroup of a group $G$ then
$N\cong T^k$ for some finite simple group $T$. Moreover, if $G$ is
quasiprimitive then $G$ has at most two minimal normal subgroups.

\vspace{0.5cm}
{\bf HA:} A quasiprimitive group is of type HA if it has a unique
minimal normal subgroup $N$ and $N$ is elementary abelian. If $|N|=p^d$
for some prime $p$, then $G$ can be embedded in $\AGL(d,p)$ in its
usual action on a $d$-dimensional vector space over $\GF(p)$ with $N$
identified as the group of all translations.

\vspace{0.5cm}
{\bf HS and HC:} These two classes consist of all quasiprimitive
groups with two minimal normal subgroups. In both cases, the two
minimal normal subgroups are regular and nonabelian. For type HS, the
two minimal normal subgroups are simple, while for type HC the two
minimal normal subgroups are isomorphic to $T^k$ for some $k\geq 2$
and $T$ nonabelian simple.

All quasiprimitive groups of type HA, HS and HC are in
fact primitive. For the remaining five types the groups may or may not
be primitive.

\vspace{0.5cm}
{\bf AS:} This class consists of all groups $G$ such that
$T\leqslant G\leqslant\Aut(T)$ for some finite nonabelian simple
group, that is, $G$ is an almost simple group. Note that any action of
an almost simple group with $T$ transitive is quasiprimitive.

\vspace{0.5cm}
{\bf TW:} This type consists of all quasiprimitive
groups $G$ with a unique minimal normal subgroup $N\cong T^k$, for some finite
nonabelian simple group $T$ and positive integer $k\geq 2$, such that
$N$ is regular. Thus $G=N\rtimes G_{\omega}$ and can be constructed as
a twisted wreath product (see \cite{baddeleyTW}). If $G$ is primitive
then $G_{\omega}$ normalises no nontrivial proper subgroup of $N$. The
following lemma gives us a necessary and sufficient condition for a
quasiprimitive TW group to be primitive. 
\begin{lemma}\cite[Lemmas 3.1 and 3.2]{baddeleyTW}
\label{lem:TWmax}
Let $N\cong T^k$ for some finite nonabelian simple group $T$ and
$G=N\rtimes P$. Let $Q$ be the normaliser in $P$ of a simple direct
factor of $N$ and $\varphi:Q\rightarrow \Aut(T)$ be the homomorphism
induced by the action of $Q$ on this factor. Then $P$ is maximal in
$G$ if and only if $\Inn(T)\leqslant \varphi(Q)$ and there is no subgroup $H$ of $P$ with a homomorphism $\hat{\varphi}$ from $H$ to $\Aut(T)$ which extends $\varphi$.
\end{lemma}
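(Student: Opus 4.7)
The plan is to exploit the standard bijection between subgroups $R$ of $G = N \rtimes P$ with $P \leqslant R \leqslant G$ and $P$-invariant subgroups $M$ of $N$, namely $R \mapsto R \cap N$ with inverse $M \mapsto MP$. Under this correspondence, $P$ is maximal in $G$ if and only if $N$ has no $P$-invariant subgroups other than $1$ and $N$. The two conditions in the lemma will emerge by classifying $P$-invariant subgroups of $N = T_1 \times \cdots \times T_k$ (with the $T_i \cong T$ permuted transitively by $P$) according to whether they project onto the simple factors, and then rephrasing each family in terms of invariants of the map $\varphi$.

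First I would handle the \emph{product-type} obstruction. If $M$ fails to project onto $T_1$, then $M \cap T_1$ is a proper $Q$-invariant subgroup of $T_1$, since $Q$ normalises both $M$ and $T_1$. Conversely, given any proper nontrivial $Q$-invariant subgroup $M_1 \leqslant T_1$, the product $\prod_{pQ \in P/Q} M_1^p$ is a proper nontrivial $P$-invariant subgroup of $N$. So this family of counterexamples to maximality is exactly controlled by the existence of proper nontrivial $\varphi(Q)$-invariant subgroups of $T$. Since $T$ is nonabelian simple, a subgroup of $T$ is $\Inn(T)$-invariant if and only if it is normal in $T$, hence $1$ or $T$; therefore $\Inn(T) \leqslant \varphi(Q)$ rules this family out, and the reverse direction (that failure of this containment forces a proper nontrivial $\varphi(Q)$-stable subgroup of $T$) is what yields the first condition of the lemma.

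Next I would analyse the remaining \emph{subdirect (diagonal)} case, assuming $\Inn(T) \leqslant \varphi(Q)$. Any such $M$ projects onto every $T_i$; decomposing $M$ as a subdirect product determines a partition of $\{1,\ldots,k\}$ into blocks on which $M$ restricts to a full diagonal copy of $T$, and the set of blocks must be $P$-invariant. Let $H \leqslant P$ denote the stabiliser of the block $\mathcal{O}$ containing the index $1$; then $Q \leqslant H$. The full diagonal subgroup of $\prod_{i \in \mathcal{O}} T_i$ is specified by a system of isomorphisms $T \to T_i$, and the statement that this diagonal is $H$-invariant translates precisely into the existence of a homomorphism $\hat\varphi \colon H \to \Aut(T)$ that restricts to $\varphi$ on $Q$. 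Conversely, any (proper) extension of $\varphi$ to such an $H$ reconstructs a full diagonal on an $H$-orbit, and inducing over $P$-translates gives a proper nontrivial $P$-invariant subgroup of $N$. This yields the second condition.

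Combining the two steps establishes the biconditional, with the understanding that the trivial extension $\hat\varphi = \varphi$ on $H = Q$ is excluded (since it produces $M = N$, not a proper subgroup). The main technical obstacle is the diagonal-type step: one must carefully verify that $H$-invariance of a full diagonal in $\prod_{i\in\mathcal{O}}T_i$ is equivalent to the coherent extendability of $\varphi$ to $H$, which is a cocycle-style computation inside the twisted wreath product. A secondary and more delicate point is the converse direction in the product-type step, where one needs to know that, for nonabelian simple $T$, any subgroup of $\Aut(T)$ failing to contain $\Inn(T)$ must normalise some proper nontrivial subgroup of $T$; after that, the rest of the argument is essentially formal.
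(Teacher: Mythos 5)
This lemma is quoted by the paper from Baddeley's article and is not proved in the text, so there is no internal proof to compare against; your outline does, however, follow the same route as the cited source: identify overgroups of $P$ with $P$-invariant subgroups of $N$ via $R\mapsto R\cap N$, split such subgroups into non-subdirect and subdirect ones, match the first family with the condition $\Inn(T)\leqslant\varphi(Q)$ and the second with extensions of $\varphi$, and recover subdirect subgroups as products of full strips over a $P$-invariant partition whose block stabiliser $H\geqslant Q$ carries the extension $\hat\varphi$. That architecture, including the exclusion of the trivial extension $H=Q$, is correct.

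Two points need attention. First, a small but real slip: in the product-type step you work with $M\cap T_1$, which can be trivial even when $M$ is nontrivial and non-subdirect (e.g.\ $M$ a diagonal over a proper subgroup $R<T$ inside $T_1\times T_2$ has $M\cap T_1=1$). The object you need is the projection $\pi_1(M)$, which is $Q$-invariant, nontrivial whenever $M\neq 1$ (using transitivity of $P$ on the factors), and proper exactly when $M$ is not subdirect; with that substitution the forward implication goes through. Second, the step you yourself flag as ``more delicate'' is a genuine gap rather than a routine verification: the claim that a subgroup $X\leqslant\Aut(T)$ with $\Inn(T)\not\leqslant X$ must normalise a proper nontrivial subgroup of $T$ is not elementary. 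The case $X\cap\Inn(T)\neq 1$ is easy (that intersection corresponds to such a subgroup), but when $X\cap\Inn(T)=1$ one needs Schreier's conjecture to conclude $X$ is soluble and then a further argument (a minimal normal elementary abelian $V\norml X$ gives the $X$-invariant subgroup $C_T(V)$, whose nontriviality rests on Thompson's fixed-point-free automorphism theorem in the cyclic case and additional work otherwise). Without supplying this, the ``only if'' direction of the first condition is unproved, and it is precisely the CFSG-dependent ingredient that makes the lemma nontrivial. The diagonal step, by contrast, is correctly reduced to the bookkeeping you describe.
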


\vspace{0.5cm}
Before describing the remaining three types of
quasiprimitive groups we need some definitions.
Let $N=T_1\times\cdots\times T_k$ for some nontrivial groups
$T_1,\ldots, T_k$. For each $i=1,\ldots,k$, let 
$\pi_i:N\rightarrow T_i$ be the natural projection map. Given a
subgroup $K$ of $N$, we say that $K$ is a \emph{subdirect product} of $N$ if
$\pi_i(K)=T_i$ for each $i=1,\ldots, k$, while we say that $K$ is a
\emph{diagonal subgroup} of $N$ if $K$ is isomorphic to each of its
projections, that is, $K\cong\pi_i(K)$ for all $i=1,\ldots,k$. 
If $T_1=T_2=\cdots=T_k$ and $\pi_i(g)=\pi_j(g)$ for all $g\in K$, we call
$K$ a \emph{straight diagonal subgroup}. A 
\emph{full diagonal subgroup} of $N$ is a subgroup which is both a
subdirect product and a diagonal subgroup.  

We call $K$ a \emph{strip} of $N$ if there exists some subset $J$ of
$\{1,\ldots,k\}$ such that $\pi_i(K)\cong K$ for all $i\in J$ while
$\pi_i(K)=1$ for all $i\notin J$. We refer to $J$ as the \emph{support} of
$K$. Note that a strip is a diagonal subgroup of 
$\prod_{i\in J}T_i$. We call $K$ a \emph{full strip} if it is a full
diagonal subgroup of $\prod_{i\in J}T_i$, while we say that it is 
\emph{nontrivial} if $|J|>1$. We say that two strips are
\emph{disjoint} if their supports are disjoint. Note that disjoint
strips commute.   

If $N=T_1\times\cdots\times T_k$, where the $T_i$ are pairwise
isomorphic nonabelian simple groups, a well known lemma (see for
example \cite{Scott}) says that if
$K$ is a subdirect product of $N$ then $K$ is the direct product of
pairwise disjoint full strips. The set of supports of these strips is
a partition $\mathcal{P}$ of $\{1,\ldots,k\}$. Note that if 
$K$ is normalised by a group $G$, then $G$ preserves $\mathcal{P}$ and if $G$ acts
transitively by conjugation on the set $\{T_1,\ldots,T_k\}$, then $G$
acts transitively on $\mathcal{P}$ and so the parts of $\mathcal{P}$
all have the same size.

\vspace{0.5cm}
{\bf SD:}
A quasiprimitive group $G$ acting on a set $\Omega$ is of type
  SD if
$G$ has a unique minimal normal subgroup $N$, $N\cong T^k$ for some
nonabelian simple group $T$, $k\geq 2$ and given $\omega\in\Omega$,
the point stabiliser $N_\omega$ is a full diagonal subgroup of
$N$. Conjugating $G$, if necessary, by an element of $\Sym(\Omega)$ we
may assume that $N_\omega=\{(t,\ldots,t)\mid t\in T\}$ and 
$G_\omega\leqslant\{(t,\ldots,t)\mid t\in\Aut(T)\}\rtimes S_k$. Since $N$
is a minimal normal subgroup of $G$ and $G=NG_\omega$, it follows that
$G_\omega$ acts transitively by conjugation on the set of $k$ simple
direct factors of $N$. A quasiprimitive group $G$ of type SD is
primitive, if and only if $G$ acts primitively on the set of $k$
simple direct factors of $N$.

\vspace{0.5cm}
{\bf CD:}
A quasiprimitive group $G$ acting on a set $\Omega$ is of type CD if
$G$ has a unique minimal normal subgroup $N$, $N\cong T^k$ for some
nonabelian simple group $T$, $k\geq 2$ and given $\omega\in\Omega$,
$N_\omega$ is a product of $\ell\geq 2$ full strips of $N$, that is,
$N_{\omega}\cong T^\ell$ and is a subdirect product of $N$. Note that
$G$ acts transitively by conjugation on the set of $k$ simple direct
factors of $N$ and preserves a partition $\mathcal{P}$ of
$\{1,\ldots,k\}$ given by the set of supports of the full strips.  
The group $G$ is a subgroup of $H\Wr S_\ell$ acting on
$\Omega=\Delta^\ell$, for some quasiprimitive group $H$ of type SD on
$\Delta$ with unique minimal normal subgroup $T^{k/\ell}$. In fact,
given $P\in \mathcal{P}$, the group $G_P$ induces $H$ on $\Delta$.
Moreover, $G$ is primitive if and only if $H$ is primitive and so $G$
is primitive if and only if for $P\in \mathcal{P}$, $G_P$ acts
primitively on $\mathcal{P}$.

Given two partitions 
$\mathcal{P}_1,\mathcal{P}_2$ of a set $\Omega$, we say that
$\mathcal{P}_1$ \emph{refines} $\mathcal{P}_2$ if each 
$P\in \mathcal{P}_2$ is a union of elements of $\mathcal{P}_1$.  This
defines a partial order on the set of all partitions of $\Omega$ and
we can define $\mathcal{P}_1\vee\mathcal{P}_2$ to be the smallest
partition of 
$\Omega$ refined by both $\mathcal{P}_1$ and $\mathcal{P}_2$. The
following lemma will be very handy in our analysis of SD and CD groups.
\begin{lemma}
\label{lem:intersectstrips}
Let $N=T_1\times\cdots\times T_k$ for some nontrivial groups $T_i$
and let $K_1,K_2$ be subgroups of $N$. For each $i=1,2$, suppose that
$K_i$ is a product of strips such that the set of supports of these
strips is the partition $\mathcal{P}_i$ of $\{1,\ldots,k\}$. Then
$K_1\cap K_2$ is a product of strips such that the set of supports of
these strips is $\mathcal{P}_1\vee \mathcal{P}_2$.
\end{lemma}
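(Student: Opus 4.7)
My plan is to align the two strip decompositions with the join partition $\mathcal{P}_1 \vee \mathcal{P}_2$ and then run a connectedness argument on coordinates.

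First, I would note that each strip appearing in the decomposition of $K_1$ has its support contained in a single block of $\mathcal{P}_1$, hence in a single block $Q \in \mathcal{P}_1 \vee \mathcal{P}_2$. Grouping the strips of $K_1$ by which block $Q$ contains their supports gives a direct-product decomposition $K_1 = \prod_{Q} L_1^Q$ with $L_1^Q \leqslant \prod_{i \in Q} T_i$, and likewise $K_2 = \prod_Q L_2^Q$. Since the blocks of $\mathcal{P}_1 \vee \mathcal{P}_2$ are pairwise disjoint as subsets of coordinates, one obtains
\[
K_1 \cap K_2 = \prod_{Q \in \mathcal{P}_1 \vee \mathcal{P}_2} \bigl(L_1^Q \cap L_2^Q\bigr).
\]
This reduces the problem to showing, for each block $Q$, that $L_1^Q \cap L_2^Q$ is a strip with support $Q$. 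After renaming coordinates, the remaining task is: under the hypothesis $\mathcal{P}_1 \vee \mathcal{P}_2 = \{\{1,\ldots,k\}\}$, prove that $K_1 \cap K_2$ is itself a strip with support $\{1,\ldots,k\}$.

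In this reduced case I would show that for every $j$, the projection $\pi_j$ is injective on $K_1 \cap K_2$. The key input is the definition of a strip: if $S$ is a strip with support $P$, then $\pi_i|_S$ is an isomorphism for every $i \in P$, so knowing $\pi_i(s)$ for any single $i \in P$ determines $s$ entirely, and hence determines $\pi_{i'}(s)$ for every $i' \in P$. Thus for $x \in K_1$, fixing $\pi_j(x)$ determines $\pi_i(x)$ for every $i$ in the $\mathcal{P}_1$-block containing $j$, and analogously for $K_2$ with $\mathcal{P}_2$-blocks. For $x \in K_1 \cap K_2$ these two determinations can be chained alternately, so $\pi_j(x)$ determines $\pi_i(x)$ for every $i$ reachable from $j$ in the graph whose hyperedges are the strip supports of $K_1$ and $K_2$. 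The connected components of this graph are exactly the blocks of $\mathcal{P}_1 \vee \mathcal{P}_2$; in the reduced single-block case the chain reaches every coordinate, proving $\pi_j$ is injective. This exhibits $K_1 \cap K_2$ as a strip, and if it is nontrivial then a single nontrivial coordinate forces the support to be all of $\{1,\ldots,k\}$ via the isomorphisms $\pi_j$.

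The main obstacle is the last step: the ``graph-chasing'' argument that identifies the join partition with the connected components of the union of the two support relations, and then converts this into mutual determination of coordinates inside $K_1 \cap K_2$. Once this combinatorial picture is in place, the rest of the proof is routine bookkeeping with the definitions of \emph{strip}, \emph{support}, and \emph{product of disjoint strips}.
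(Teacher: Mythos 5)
Your argument is correct, but it is organised quite differently from the paper's. The paper works directly with supports of elements: for each part $P$ of $\mathcal{P}_1\vee\mathcal{P}_2$ it defines the subgroup $K_P$ of elements of $K_1\cap K_2$ supported inside $P$, observes that the support of any $g\in K_1\cap K_2$ is simultaneously a union of parts of $\mathcal{P}_1$ and of $\mathcal{P}_2$ (hence of the join), and then peels off one part at a time by induction on the number of parts in the support, producing explicit elements $k_1=k_2\in K_P$ and passing to $gk_1^{-1}$. You instead first factor $K_1$ and $K_2$ block-by-block over $\mathcal{P}_1\vee\mathcal{P}_2$ so that the intersection splits as $\prod_Q\bigl(L_1^Q\cap L_2^Q\bigr)$, and then, in the single-block case, prove that each coordinate projection is injective on the intersection by chaining the ``one coordinate determines the whole strip component'' property along the graph whose components realise the join. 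Both routes rest on the same underlying fact -- the projection of a strip onto any coordinate of its support is injective, equivalently a nontrivial strip element is nontrivial in every coordinate of its support -- but yours cleanly separates the combinatorial content (join $=$ connected components of the union of the two same-block relations, a standard fact you should justify or cite) from the group theory (injectivity of $\pi_j$), whereas the paper's induction is more constructive and self-contained, exhibiting the strip factors $K_P$ explicitly. One shared caveat: if the intersection is trivial on some block $Q$ then no strip with support $Q$ actually occurs, so the stated conclusion holds only up to this degenerate case; this affects neither proof in the application (Proposition \ref{prn:SDCD}), where the intersection is known to be a nontrivial subdirect product.
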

\begin{proof}
For each $P\in \mathcal{P}_1\vee \mathcal{P}_2$, let 
$$K_P=\{g\in K_1\cap K_2\mid \pi_i(g)=1 \text{ for all } i\notin P\}.$$
Then $X=\prod_{P\in\mathcal{P}_1\vee \mathcal{P}_2} K_P$ is a subgroup
of $K_1\cap K_2$. 

Let $g\in K_1\cap K_2$ such that $g\neq 1$, and let
$J$ be the set of all $i\in \{1,\ldots,k\}$ such that
$\pi_i(g)\neq 1$.  Since $g\in K_1$ it follows that $J$ is a union
of parts of $\mathcal{P}_1$ and since $g\in K_2$ it follows that $J$ is a union
of parts of $\mathcal{P}_2$. Hence $J$ is a union of $\ell$ parts of
$\mathcal{P}_1\vee \mathcal{P}_2$ for some $\ell\geq 1$. Thus each $K_P$
is a strip.  If $\ell=1$ then $g\in K_P$ for some $P$ and so $g\in X$. If
$\ell> 1$, let $P$ be one of the parts contained in $J$. Since 
$g\in K_1$ and $K_1$ is a product of strips, there exists $k_1\in K_1$ such
that $\pi_i(k_1)=\pi_i(g)$ for all $i\in P$ while $\pi_i(k_1)=1$ for all
$i\notin P$. Similarly, there exists $k_2\in K_2$ such that
$\pi_i(k_2)=\pi_i(g)$ for all $i\in P$ while $\pi_i(k_2)=1$ for all
$i\notin P$. Hence $k_1=k_2\in K_P\leqslant X$. Moreover, 
$gk_1^{-1}\in K_1\cap K_2$ and has support $J\backslash P$, a union of
$\ell-1$ parts of $\mathcal{P}_1\vee\mathcal{P}_2$.  It follows that
$g\in X$ and so $K_1\cap K_2$ is a product of the strips $K_P$,
whose supports are the parts of 
$\mathcal{P}_1\vee\mathcal{P}_2$.
\end{proof}

\vspace{0.5cm}
{\bf PA:}
A quasiprimitive group $G$ acting on a set $\Omega$ is of type PA if
$G$ has a unique minimal normal subgroup $N$, $N\cong T^k$ for some
nonabelian simple group $T$, 
$k\geq 2$ and given $\omega\in\Omega$, $N_\omega$ is a subdirect
product of $R^k$ for some $R<T$.  The following two lemmas will be
useful for determining primitivity. See for example, 
\cite[Lemma 2.7A]{dixon} for a proof of the first.

\begin{lemma}
\label{lem:PAprim1}
Let $B$ be a group with subgroup $H\neq 1$. Then for each
positive integer $k$, $H\Wr S_k$ is maximal in $B\Wr S_k$ if and
only if $H$ is maximal in $B$.
\end{lemma}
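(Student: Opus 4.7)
The plan is to translate maximality of $H\Wr S_k$ in $B\Wr S_k$ into primitivity of a natural product action, then invoke the classical criterion for primitivity of wreath products in their product action.

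For the forward direction I argue by contrapositive. If $H$ is not maximal in $B$, pick an intermediate subgroup $K$ with $H<K<B$; then $H\Wr S_k<K\Wr S_k<B\Wr S_k$ is a chain of strict inclusions (witnessed on the base group by $H^k<K^k<B^k$), so $H\Wr S_k$ is not maximal in $B\Wr S_k$.

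For the backward direction, I identify the coset space $(B\Wr S_k)/(H\Wr S_k)$ with $(B/H)^k$ in a $B\Wr S_k$-equivariant way, where $B\Wr S_k$ acts by the product action: the base group $B^k$ acts coordinatewise and $S_k$ permutes the coordinates. Under this identification, the stabiliser of the distinguished point $(H,\ldots,H)\in(B/H)^k$ is precisely $H\Wr S_k$. By the standard correspondence between overgroups of a point stabiliser in a transitive action and blocks containing that point, $H\Wr S_k$ is maximal in $B\Wr S_k$ if and only if the product action on $(B/H)^k$ is primitive. The classical theorem on primitivity of product actions then gives that this holds precisely when the base action of $B$ on $B/H$ is primitive, which is equivalent to $H$ being maximal in $B$.

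The main subtlety is in the backward direction: one must ensure the hypothesis $H\neq 1$ excludes the degenerate case in which the base action is primitive but regular (which for $k\geq 2$ would make the product action imprimitive). In the PA-type applications of this lemma, $B$ is a nonabelian simple group, so every maximal subgroup $H\neq 1$ is automatically non-normal and hence the action of $B$ on $B/H$ is non-regular, so the degeneracy does not arise and the classical criterion applies cleanly.
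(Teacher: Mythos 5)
Your route is the same as the paper's: the authors give no proof of Lemma \ref{lem:PAprim1} at all, but point to \cite[Lemma 2.7A]{dixon}, which is precisely the product-action primitivity criterion you invoke after identifying the coset space $(B\Wr S_k)/(H\Wr S_k)$ with $[B:H]^k$ and the point stabiliser of $(H,\ldots,H)$ with $H\Wr S_k$. The contrapositive argument for the forward direction is also correct.

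The subtlety you flag in your last paragraph is not a cosmetic worry about how to apply the criterion; it is a genuine defect in the statement of the lemma. The hypothesis $H\neq 1$ does not exclude the possibility that $H$ is maximal but \emph{normal} in $B$, in which case $B$ induces a regular group on $[B:H]$ and the product action on $[B:H]^k$ is imprimitive for every $k\geq 2$. Concretely, take $B=S_3$, $H=A_3$ and $k=2$: the map $(b_1,b_2)\sigma\mapsto \mathrm{sgn}(b_1)\mathrm{sgn}(b_2)$ is a homomorphism from $B\Wr S_2$ onto $C_2$ whose kernel is a proper subgroup properly containing $A_3\Wr S_2$, so $A_3\Wr S_2$ is not maximal in $S_3\Wr S_2$ even though $A_3$ is a nontrivial maximal subgroup of $S_3$. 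The correct hypothesis for the ``if'' direction is that $H$ be maximal and non-normal in $B$, equivalently that $B$ act primitively but not regularly on $[B:H]$, which is exactly what \cite[Lemma 2.7A]{dixon} demands. Your defence of the applications is right for the first use in Construction \ref{con:PAbiqp}, where $B^+$ is almost simple and $H$ is a core-free maximal subgroup, hence non-normal. Note, however, that the second use there applies the lemma with ambient group $H$ (a maximal subgroup of $B^+$, not itself simple) and subgroup $H\cap H^g$, so the blanket appeal to ``$B$ nonabelian simple'' does not literally cover every invocation in the paper; non-normality of $H\cap H^g$ in $H$ would need checking separately in that instance. None of this is a gap in your argument --- you have in fact located an imprecision in the lemma itself.
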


\begin{lemma}
\label{lem:PAprim2}
Let $T$ be a nonabelian simple group and let 
$T\leqslant A\leqslant B\leqslant \Aut(T)$. Suppose that $H$ is a
maximal subgroup of $B$ such that $B=TH$ and $T\cap H\neq 1$. Let
$$G=\la A^k, (b,\ldots,b)\mid b\in B\ra \rtimes S_k$$
and
$$L=\la (A\cap H)^k, (h,\ldots,h)\mid h\in H\ra\rtimes S_k.$$
Then $L$ is a maximal subgroup of $G$.
\end{lemma}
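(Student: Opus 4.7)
The plan is to take an arbitrary subgroup $L \leqslant M \leqslant G$ and show that either $M = L$ or $M = G$, by analysing $X := M \cap A^k$. I would first observe that $X \supseteq (A \cap H)^k$ and, since $S_k \leqslant M$ and $H_\Delta := \{(h,\ldots,h) : h \in H\} \leqslant M$, the subgroup $X$ is invariant under both the $S_k$-action by permutation of coordinates and the diagonal conjugation action of $H$ (using that $A \trianglelefteq B$ so $H$ normalises $A$). By $S_k$-invariance the projections $\pi_i(X)$ all coincide in a single subgroup $R \leqslant A$; this $R$ is $H$-invariant and contains $A \cap H$, so $RH$ is a subgroup of $B$ containing $H$. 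By maximality of $H$ in $B$ either $RH = H$, giving $R = A \cap H$, or $RH = B$, in which case the identity $|A \cap H| = |A||H|/|B|$ coming from $B = AH$ forces $R = A$.

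When $R = A \cap H$, the containment $X \leqslant \prod_i \pi_i(X) = (A \cap H)^k$ combined with $(A \cap H)^k \leqslant X$ gives $X = (A \cap H)^k$. Since $B = AH$, both $L$ and $M$ surject onto the quotient $G/A^k \cong (B/A) \times S_k$, so $|M| = |A \cap H|^k \cdot |G/A^k| = |L|$ and $L < M$ becomes impossible.

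The heart of the argument is the case $R = A$, where I would first establish $T^k \leqslant X$. To do this, I consider $X \cap T^k$, which is a subdirect product of $T^k$ containing $(T \cap H)^k$, and apply Scott's Lemma to write it as a direct product of full strips on a partition of $\{1,\ldots,k\}$. The hypothesis $T \cap H \neq 1$ then rules out any strip on a block of size at least $2$: such a strip cannot contain the element $(h,1,\ldots,1)$ with $h \in T \cap H \setminus \{1\}$, because every element of a full strip is determined by its value in a single coordinate. Hence every block is a singleton and $X \cap T^k = T^k$. I would then pass to $\tilde X := X/T^k \leqslant (A/T)^k$, which contains $((A \cap H)T/T)^k$. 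The key identity $|(A \cap H)T/T| = |A \cap H|/|T \cap H| = |A|/|T|$, which follows at once from $B = AH = TH$ via $|A \cap H| = |A||H|/|B|$ and $|T \cap H| = |T||H|/|B|$, shows $(A \cap H)T = A$, hence $\tilde X = (A/T)^k$ and $X = A^k$. Combining $A^k \leqslant M$ with $H_\Delta \leqslant M$, $S_k \leqslant M$, and $H_\Delta A_\Delta = B_\Delta$ (again from $B = AH$) yields $M \supseteq \la A^k, B_\Delta \ra \rtimes S_k = G$. The main obstacle is precisely this Scott-Lemma reduction in the $R = A$ case: it is exactly where the hypothesis $T \cap H \neq 1$ is needed, to forbid nontrivial diagonal strips and thereby allow the lift from $T^k \leqslant X$ to $X = A^k$.
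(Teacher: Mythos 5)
Your proof is correct and follows essentially the same route as the paper's: in both, the heart of the matter is showing that $M\cap T^k$ is a subdirect product of $T^k$, invoking Scott's lemma to write it as a product of full strips, using $T\cap H\neq 1$ (via the element $(h,1,\ldots,1)$) to exclude nontrivial strips, and then climbing back up to $A^k$ and to all of $G$ via the factorisation $B=TH$ (equivalently $A=T(A\cap H)$) and the diagonal copy of $H$. The differences are organisational rather than substantive: you analyse $X=M\cap A^k$ through the dichotomy $R=\pi_i(X)\in\{A\cap H,\,A\}$ coming from the maximality of $H$ in $B$, where the paper works with $M\cap B^k$ and asserts $\pi_i(M\cap B^k)=B$ outright; your explicit order count in the case $R=A\cap H$ (forcing $M=L$) actually handles a degenerate branch that the paper's proof passes over without comment. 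One caveat: your step ``$H$ normalises $A$'' appeals to $A\trianglelefteq B$, which is not literally among the stated hypotheses $T\leqslant A\leqslant B\leqslant\Aut(T)$ --- it does hold in every application the paper makes (there $A=\soc(B)$ or $A$ has index two in $B$), and some such assumption is needed anyway for $A^k$ to be normal in $G$ and for the lemma to read as intended, so this is a defect of the statement rather than of your argument.
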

\begin{proof}
Let $M$ be a subgroup of $G$ containing $L$ and let 
$X=M\cap B^k$. Since $S_k\leqslant M$ it follows that 
$\pi_i(X)\cong \pi_j(X)$ for all $i$ and $j$. Since $L\leqslant M$ we
have $H\leqslant \pi_i(X)$ and since $H$ is maximal in $B$ it follows
that $\pi_i(X)=B$ for all $i$. Hence $X\cap T^k$ is a subdirect
product of $T^k$. However, $H\cap T\neq 1$ and 
$(H\cap T)^k\leqslant X$. Thus $X\cap T^k=T^k$. Since $B=TH$ we also
have $A=T(A\cap H)$. Then as $(A\cap H)^k\leqslant X$ it follows that
$A^k\leqslant X$. Thus $X=G\cap B^k$ and so $M=G$, that is, $L$ is maximal. 
\end{proof}

\section{Constructions}
\label{sec:con}

All the examples in Section \ref{sec:eg} had $G$ an almost simple
group. In this section we provide some general constructions for
$G$-edge-quasiprimitive graphs where $G$ is not of type AS.

Our first construction takes a $B$-edge-primitive graph where $B$ is
an almost simple group such that $B\neq \soc(B)$, and builds a
$G$-edge-primitive graph where $G$ is 
primitive of type PA on edges and primitive of type PA on vertices.

\begin{construction}\label{con:PAPA}
(Primitive PA on vertices and primitive PA on edges)
{\rm %
Let $\Sigma$ be a $B$-edge-primitive, $B$-vertex-primitive graph such
that $B$ is an almost simple group with socle $T<B$. Note that Example
\ref{eg:M12} is such a graph. Then there exist a maximal subgroup $H$
of $B$ and $g\in B\backslash H$, such that $g^2\in H$ and  
$\Sigma\cong \Cos(B,H,HgH)$. 
Let  
$$G=\la T^k,(b,\ldots,b)\mid b\in B\ra \rtimes S_k$$
and 
$$L=\la (T\cap H)^k,(h,\ldots,h)\mid h\in H\ra \rtimes S_k.$$
Then letting $\sigma=(g,\ldots,g)$ we define the coset graph
$\Gamma=\Cos(G,L,L\sigma L)$. 
}%
\end{construction}
\begin{lemma}
The graph $\Gamma=\Cos(G,L,L\sigma L)$ given by Construction
\ref{con:PAPA} is $G$-edge-primitive of type PA and
$G$-vertex-primitive of type PA. Moreover, if
$\Sigma$ given in Construction \ref{con:PAPA} is $B$-locally imprimitive then $\Gamma$ is $G$-locally
imprimitive.
\end{lemma}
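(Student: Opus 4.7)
Write $A = H \cap H^g$ and $E_\Sigma = \langle A, g \rangle$ for the arc and edge stabilisers of the chosen edge of $\Sigma$; by Proposition \ref{prn:general} applied to $\Sigma$, $A$ has index $2$ in $E_\Sigma$ and $E_\Sigma$ is maximal in $B$. The plan is to fit $(G, L, \sigma)$ into Proposition \ref{prn:general} by applying Lemma \ref{lem:PAprim2} twice: once (with the lemma's roles played by $T, B, H$) to obtain vertex-primitivity, and once (with $H$ replaced by $E_\Sigma$) to obtain edge-primitivity.

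First I would check that the coset graph data is valid: $\sigma^2 = (g^2, \ldots, g^2) \in \{(h, \ldots, h) \mid h \in H\} \leq L$ because $g^2 \in H$; $\sigma \notin L$ because $L \cap B^k$ projects into $H$ on each coordinate while $g \notin H$; and $\sigma$ does not normalise $L$ because $H^g \neq H$ (the maximal core-free $H$ is self-normalising in $B$). Core-freeness of $L$ in $G$ follows from that of $H$ in $B$ by projection. Then Lemma \ref{lem:PAprim2} (applied in the generic case $T \cap H \neq 1$, with $B = TH$ automatic from $H$ being core-free maximal) shows $L$ is maximal in $G$; together with $\sigma \notin L$ this forces $\langle L, \sigma \rangle = G$, so $\Gamma$ is connected and $G$-vertex-primitive. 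The unique minimal normal subgroup $N = T^k$ (with $k \geq 2$) intersects $L$ in $(T \cap H)^k$, a subdirect product of $R^k$ with $R = T \cap H < T$, confirming type PA on vertices.

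For edge-primitivity, a direct calculation using the description
\[
L \cap B^k \;=\; \{(x_1, \ldots, x_k) \in H^k \mid x_i x_j^{-1} \in T \text{ for all } i, j\}
\]
and the analogous one for $L^\sigma$ yields
\[
L \cap L^\sigma \;=\; \langle (T \cap A)^k,\; \{(a, \ldots, a) \mid a \in A\} \rangle \rtimes S_k.
\]
The relation $g^2 \in H$ forces $A^g = A$, so adjoining $\sigma = (g, \ldots, g)$ gives
\[
E_\Gamma \;=\; \langle (T \cap E_\Sigma)^k,\; \{(e, \ldots, e) \mid e \in E_\Sigma\} \rangle \rtimes S_k,
\]
where in the last step I would invoke that $T$ is not arc-transitive on $\Sigma$, so $T \cap E_\Sigma = T \cap A$, as in Example \ref{eg:M12}. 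A second application of Lemma \ref{lem:PAprim2}, now with $H$ replaced by the maximal subgroup $E_\Sigma$ of $B$, then yields that $E_\Gamma$ is maximal in $G$, so $\Gamma$ is $G$-edge-primitive; the intersection $N \cap E_\Gamma = (T \cap E_\Sigma)^k$ is a subdirect product of $(R')^k$ with $R' = T \cap E_\Sigma < T$, giving type PA on edges.

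Finally, if $\Sigma$ is $B$-locally imprimitive there exists a subgroup $H_0$ with $A \lneq H_0 \lneq H$. Setting $L_0 = \langle (T \cap H_0)^k, \{(h, \ldots, h) \mid h \in H_0\} \rangle \rtimes S_k$ gives $L \cap L^\sigma \lneq L_0 \lneq L$, the strictness at each end coming from the strict increase of the diagonal subgroups along $A \lneq H_0 \lneq H$. Hence the arc stabiliser of $\Gamma$ is not maximal in $L$ and $\Gamma$ is $G$-locally imprimitive. The main technical obstacle is the edge-stabiliser computation together with the identification $T \cap E_\Sigma = T \cap A$, which is automatic when $E_\Sigma$ is a novelty maximal subgroup of $B$, as in the motivating Example \ref{eg:M12}.
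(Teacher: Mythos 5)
Your proof is essentially the paper's: both arguments verify the coset-graph data, compute $G_v\cap G_w=\la(T\cap H\cap H^g)^k,(h,\ldots,h)\mid h\in H\cap H^g\ra\rtimes S_k$, apply Lemma \ref{lem:PAprim2} once with $H$ to get vertex-primitivity of type PA and once with $\la H\cap H^g,g\ra$ to get edge-primitivity, and obtain local imprimitivity by lifting an intermediate subgroup $H\cap H^g<R<H$ to a subgroup strictly between the arc stabiliser and $L$.

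The one place you diverge is worth dwelling on. You explicitly require $T\cap\la H\cap H^g,g\ra=T\cap H\cap H^g$ and justify it by asserting that $T$ is not arc-transitive on $\Sigma$. That assertion is not among the stated hypotheses of Construction \ref{con:PAPA}, so as written you are invoking something you have not established. However, the paper's proof needs exactly the same identification and makes it silently: the edge stabiliser it computes is $\la(T\cap H\cap H^g)^k,(e,\ldots,e)\mid e\in E\ra\rtimes S_k$ where $E=\la H\cap H^g,g\ra$, whereas the maximal subgroup delivered by Lemma \ref{lem:PAprim2} applied to $E$ is $\la(T\cap E)^k,(e,\ldots,e)\mid e\in E\ra\rtimes S_k$, and these coincide only when $T\cap E=T\cap H\cap H^g$, i.e.\ only when $T$ is not arc-transitive on $\Sigma$. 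If $T$ is arc-transitive --- for instance $\Sigma=K_5$ with $B=S_5$ and $T=A_5$, which satisfies every stated hypothesis of the construction --- the computed edge stabiliser has index $2^{k-1}$ in that maximal subgroup and $\Gamma$ is not $G$-edge-primitive. So you have not introduced a gap; you have surfaced one that is latent in the construction itself, and the correct repair is to add the condition that $T$ is not arc-transitive on $\Sigma$ (which does hold in the motivating Example \ref{eg:M12}) as a hypothesis rather than to ``invoke'' it.
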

\begin{proof}
Since $T<B$ and $B$ is primitive, it follows that $H\neq 1$. Then as
$H$ is a maximal subgroup of $B$, Lemma \ref{lem:PAprim2} implies that
the action of $G$ on 
$V\Gamma=[G:L]$ is primitive of type PA. Let $v$ be the vertex given
by the coset $L$ and $w$ be the 
adjacent vertex given by $L\sigma$. Then $G_w=L^{\sigma}$ and  
$$G_v\cap G_w=
\la(T\cap H\cap H^g)^k,(h,\ldots,h)\mid h\in H\cap H^g\ra\rtimes S_k.$$
Furthermore, 
\begin{align*}
G_{\{v,w\}} &=\la G_v\cap G_w,\sigma\ra \\
  &= \la(T\cap H\cap H^g)^k,(h,\ldots,h)\mid h\in \la H\cap
     H^g,g\ra\ra\rtimes S_k
\end{align*}
which  by Lemma \ref{lem:PAprim2}, is a maximal subgroup of $G$ since
$\la H\cap H^g,g\ra$ is a maximal subgroup of $B$. Hence $G$ acts
primitively on $E\Gamma$ of type PA.

If $\Sigma$ is $B$-locally imprimitive there exists a subgroup $R$
such that $H\cap H^g<R<H$. It follows that $G_{vw}$ is not maximal in
$L$ and so $\Gamma$ is $G$-locally imprimitive.
\end{proof}

We have the following construction which takes a $B$-edge-primitive
bipartite graph such that $B$ is almost simple and $B^+$ is primitive
on each bipartite half, and builds a $G$-edge-primitive bipartite
graph with $G$ primitive of type PA on edges and 
$G^+$ primitive of type PA on each of the bipartite halves.

\begin{construction}
\label{con:PAbiqp}
(Primitive {\rm PA} on edges and biprimitive on vertices with
$G^+$ primitive of type {\rm PA})
{\rm %
Let $\Sigma$ be a bipartite connected $B$-edge-primitive graph such
that $B$ is an almost simple group with socle $T$ such that $B^+$ acts
primitively on each bipartite half. Then there exist a corefree maximal
subgroup $H$ of $B^+$ and $g\in B\backslash B^+$ such that $g^2\in H$
and $\Sigma=\Cos(B,H,HgH)$. Let $\sigma=(g,\ldots,g)$,
$$G=\la (B^+)^k,\sigma\ra \rtimes S_k,$$
and $L=H^k\rtimes S_k$. Define $\Gamma=\Cos(G,L,L\sigma L)$. 
} %
\end{construction}
\begin{lemma}
The connected bipartite graph $\Gamma=\Cos(G,L,L\sigma L)$ yielded by
Construction \ref{con:PAbiqp} is $G$-edge-primitive of type PA and 
$G$-biprimitive on vertices such that $G^+$ acts primitively of
type PA on both of its vertex orbits. Moreover, $\Gamma$ is
$G$-locally primitive if and only if $\Sigma$ is $B$-locally primitive. 
\end{lemma}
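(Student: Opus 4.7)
The plan is to adapt the proof of the previous lemma (for Construction~\ref{con:PAPA}) to the bipartite setting. First I would identify $G^+ = (B^+)^k \rtimes S_k$ as the index-two subgroup of $G$ stabilising each bipartite half. Since $\sigma$ normalises $(B^+)^k$ and $\sigma^2 = (g^2,\ldots,g^2) \in H^k \leqslant (B^+)^k$, this subgroup does have index two in $G$, and because $L \leqslant G^+$ while $\sigma \notin G^+$ the two $G^+$-orbits $[G^+:L]$ and $[G^+:L^\sigma]$ form a $G$-invariant bipartition of $V\Gamma$; moreover $L\sigma L \subseteq G\setminus G^+$, so every edge crosses the bipartition. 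For connectedness I would show $\langle L, \sigma\rangle = G$ using the fact that $\langle H, H^g\rangle = B^+$ (which holds because $\Sigma$ is a connected $B$-arc-transitive bipartite graph): the subgroup $\langle L, \sigma\rangle$ then contains $H^k$ and $(H^g)^k = (H^k)^\sigma$, hence $(B^+)^k$, and combined with $S_k \leqslant L$ and $\sigma$ it exhausts $G$.

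For $G$-edge-primitivity of type {\rm PA}, I would take $v = L$, $w = L\sigma$. A direct computation, using that $\sigma$ commutes with $S_k$ and that $g$ normalises $H\cap H^g$ (since $H^{g^2}=H$), yields $G_v = L$, $G_w = L^\sigma = (H^g)^k \rtimes S_k$, $G_{vw} = (H\cap H^g)^k \rtimes S_k$, and
\[
G_{\{v,w\}} = \langle (H\cap H^g)^k, \sigma\rangle \rtimes S_k.
\]
Writing $E = \langle H\cap H^g, g\rangle$, which is the $B$-edge stabiliser of $\Sigma$ and maximal in $B$ by $B$-edge-primitivity, I would recognise $G_{\{v,w\}}$ as precisely the subgroup produced by Lemma~\ref{lem:PAprim2} applied with $A = B^+$ and with $E$ in the role of the maximal subgroup. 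Hence $G_{\{v,w\}}$ is maximal in $G$, so $\Gamma$ is $G$-edge-primitive. Type {\rm PA} then follows from the calculation that the socle $T^k$ of $G$ meets $G_{\{v,w\}}$ in $(T\cap E)^k$ (since $T \leqslant B^+$ forces the coset $(H\cap H^g)^k\sigma$ to be disjoint from $T^k$), which is a subdirect product of $R^k$ for the proper subgroup $R = T\cap E < T$.

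For the vertex action, Lemma~\ref{lem:PAprim1} applied to $H < B^+$ shows that $L = H^k \rtimes S_k$ is maximal in $G^+ = (B^+)^k \rtimes S_k$, so $G^+$ is primitive of type {\rm PA} on each of its two vertex orbits. Any $G$-invariant partition properly refining the bipartition would restrict to a proper $G^+$-invariant partition of one orbit, contradicting this primitivity, so $G$ is biprimitive on vertices. Finally, $G$-local primitivity is the primitivity of $L$ on $L/G_{vw}$ (the product action of $H \Wr S_k$ on $\Sigma(v)^k$), which by Lemma~\ref{lem:PAprim1} applied to $H\cap H^g < H$ is equivalent to $H\cap H^g$ being maximal in $H$, that is, to $\Sigma$ being $B$-locally primitive.

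The principal technical hurdle is verifying the hypothesis $T\cap E \neq 1$ of Lemma~\ref{lem:PAprim2}: this follows from $B$-edge-primitivity forcing $T$ to be transitive on $E\Sigma$, together with $B^+$ being primitive on each bipartite half so that $T$ acts non-trivially on each half, which rules out $T$ being regular on edges except in the trivial cases that are excluded by the assumption that $\Sigma$ is a non-trivial edge-primitive graph.
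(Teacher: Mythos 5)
Your proof is correct and follows essentially the same route as the paper's: connectedness from $\langle H,g\rangle=B$, edge-primitivity via the edge-stabiliser computation $G_{\{v,w\}}=\langle (H\cap H^g)^k,\sigma\rangle\rtimes S_k$ together with Lemma \ref{lem:PAprim2}, and Lemma \ref{lem:PAprim1} for both the primitivity of $G^+$ on the bipartite halves and the local-primitivity equivalence. The extra details you supply (the explicit verification of $T\cap E\neq 1$ and the socle computation giving type PA) are left implicit in the paper, and, like the paper, you really only establish biprimitivity in the sense that $G^+$ is primitive on each half rather than ruling out systems of imprimitivity whose blocks meet both halves.
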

\begin{proof}
Since $\Sigma$ is connected we have $\la H,g\ra=B$. It follows that 
$\la L,\sigma\ra=G$ and so $\Gamma$ is connected. The stabiliser in $G$
of the edge $e=\{L,L\sigma\}$ is 
$\la (H\cap H^g)^k,\sigma \ra \rtimes S_k$ which by Lemma
\ref{lem:PAprim2} is a maximal subgroup of $G$ since 
$\la H\cap H^g,g\ra$, the stabiliser in $B$ of an edge in $\Sigma$, is
a maximal subgroup of $B$. Hence $G$ acts primitively of type PA on
$E\Gamma$. The index two subgroup $G^+=B^+ \Wr S_k$ of $G$ has two
orbits on $V\Gamma$. Hence $\Gamma$ is bipartite. Moreover, since $H$
is a maximal subgroup of $B^+$ it follows from Lemma \ref{lem:PAprim1}
that $G^+$ acts primitively of type PA on each of the bipartite
halves.    

Since $\la H\cap H^g,g\ra$ is maximal in the almost simple group
$B^+$, we have $H\cap H^g\neq 1$. Thus by Lemma \ref{lem:PAprim1}
$(H\cap H^g)^k\rtimes S_k$ is maximal 
in $H^k\rtimes S_k$ if and only if $H\cap H^g$ is maximal in $H$, and 
$\Gamma$ is $G$-locally primitive if and only if $\Sigma$ is
$B$-locally primitive.   
\end{proof}

\begin{remark}
\label{rem:notqp}
{\rm 
Suppose that in Construction \ref{con:PAbiqp}, we let $k=2$ and let
$\overline{G}=(B^+)^2\rtimes \la (g,g)(1,2)\ra\leqslant G$. Then
$\overline{G}_e=(H\cap H^g)^2\rtimes \la (g,g)(1,2)\ra$, which is a
maximal subgroup of $\overline{G}$.  Thus $\overline{G}$ is
edge-primitive of type PA and biquasiprimitive on vertices. Moreover,
$(\overline{G})^+=(B^+)^2$ and $\overline{G}_v=H^2$. Hence
$(\overline{G})^+$ is not quasiprimitive on each bipartite half of
$\Gamma$.
}%
\end{remark}

We now give a general construction of $G$-edge-quasiprimitive graphs 
for which the action of $G$ on edges is of type SD or CD and $G$ is
vertex-transitive.
\begin{construction}
\label{con:SDCDtrans}
(Quasiprimitive {\rm SD} or {\rm CD} on edges and vertex-transitive)
{\rm %
Let $G$ be a quasiprimitive group on a set $\Omega$ of type SD or CD
with socle $N=T^k$. Let $\omega\in\Omega$ and let $\mathcal{P}$ be the $G$-invariant partition of $\{1,\ldots,k\}$ given by the set of supports of the full strips of $N_\omega$. If $G$ is of type SD then $\mathcal{P}=\{\{1,\ldots,k\}\}$ while if $G$ is of type CD then
$\mathcal{P}$ is a nontrivial system of imprimitivity for $G$. 
Suppose that $G$ has an index two subgroup $G^+$ which leaves
invariant two distinct partitions $\mathcal{P}_1$ and $\mathcal{P}_2$ of
$\{1,\ldots,k\}$ which are interchanged by $G$, and such
that $\mathcal{P}_1\vee\mathcal{P}_2=\mathcal{P}$. 

Let $L=G_{\omega}$. Conjugating by a suitable
element of $\Sym(\Omega)$ we may assume that each $h\in L$ is of the
form $(t_1,\ldots,t_k)\sigma$ where 
$t_i\in\Aut(T)$, $\sigma\in S_k$,  $\sigma$ preserves $\mathcal{P}$,
and if $i,j$ belong to the same part of $\mathcal{P}$ then $t_i=t_j$. 
Since $L^{\{1,\ldots,k\}}=G^{\{1,\ldots,k\}}$, it follows that $L$ has
an index two subgroup $L^+$ which leaves $\mathcal{P}_1$ and
$\mathcal{P}_2$ invariant. Moreover, $L=\la L^+,g\ra$ for some element 
$g=(t_1,\ldots,t_k)\sigma\in G$, where $\sigma$ interchanges
$\mathcal{P}_1$ and $\mathcal{P}_2$. For a subset $I$ of
$\{1,\ldots,k\}$, let $T_I$ be the straight full strip of $N$ whose
support is $I$. Let $N_1=\prod_{I\in\mathcal{P}_1} T_I$ and let
$H=N_G(N_1)$. Then as $L^+$ leaves $\mathcal{P}_1$ invariant and $G^+$
is the stabiliser of $\mathcal{P}_1$ in $G$,  we have 
$L^+\leqslant H\leqslant G^+$. Moreover, since $G^+=NL^+$, if $nl\in H$ with
$n\in N$ and $l\in L$ then $n\in N_N(N_1)=N_1$. Thus
$H=N_1L^+$. Furthermore, $H^g=N_2L^+$ where
$N_2=\prod_{I\in\mathcal{P}_2}T_I$. Since $g^2\in L^+$ it follows that
$g^2 \in H$ and we can define $\Gamma=\Cos(G,H,HgH)$.
}%
\end{construction}

\begin{lemma}
\label{lem:tranSDCDcon}
The graph $\Gamma=\Cos(G,H,HgH)$ obtained from Construction
\ref{con:SDCDtrans} is $G$-edge-quasiprimitive of type SD or CD such
that $G$ is vertex-biquasiprimitive. Moreover, $\Gamma$ is $G$-locally
primitive if and only if $\mathcal{P}_1$ is the coarsest
$L^+$-invariant partition of $\mathcal{P}_1\vee\mathcal{P}_2$
refined by $\mathcal{P}_1$.
\end{lemma}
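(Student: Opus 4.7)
The plan is to identify the arc, edge, and vertex stabilisers in $\Gamma$ and then read off the three claims. Set $v=H$ and $w=Hg$, so that the arc stabiliser is $G_{vw}=H\cap H^g$ and the edge stabiliser is $G_{\{v,w\}}=\langle H\cap H^g,g\rangle$.

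The key computation is to show that $H\cap H^g=L^+$. Writing $H=N_1L^+$ and $H^g=N_2L^+$, any element $x\in H\cap H^g$ has the form $n_1l_1=n_2l_2$ with $n_i\in N_i$ and $l_i\in L^+$, so that $n_2^{-1}n_1=l_2l_1^{-1}\in N\cap L^+=N_\omega$. Lemma \ref{lem:intersectstrips} applied to $N_1,N_2$ identifies $N_1\cap N_2$ as the product of straight strips with support partition $\mathcal{P}_1\vee\mathcal{P}_2=\mathcal{P}$, which is precisely $N_\omega$. Since $N_\omega\leqslant N_2$, this forces $n_1\in N_1\cap N_2=N_\omega\leqslant L^+$, and therefore $x\in L^+$. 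Consequently $G_{vw}=L^+$ and $G_{\{v,w\}}=\langle L^+,g\rangle=L=G_\omega$, so the $G$-action on $E\Gamma$ is permutationally equivalent to the quasiprimitive action of $G$ on $\Omega$, which is of type SD or CD by hypothesis.

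For vertex-biquasiprimitivity I would first note that $H\leqslant G^+$ while $g\notin G^+$, so $G^+$ has exactly two orbits on $V\Gamma$. Since $N$ is the unique minimal normal subgroup of $G$, every nontrivial normal subgroup of $G$ contains $N$. A short calculation yields $NH=NL^+=G^+\neq G$, so $N$ itself has two orbits on $V\Gamma$, coinciding with those of $G^+$. It follows that every nontrivial normal subgroup of $G$ has at most two orbits, whereas $G^+$ is not vertex-transitive; thus $G$ is vertex-biquasiprimitive.

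For the local primitivity criterion, the action of $G_v=H$ on $\Gamma(v)$ is equivalent to the coset action of $H$ on $H/L^+$, and so is primitive if and only if $L^+$ is maximal in $H=N_1L^+$. By a standard correspondence, subgroups $R$ with $L^+\leqslant R\leqslant H$ are in bijection with $L^+$-invariant subgroups $M$ of $N_1$ containing $N_\omega$, via $R\mapsto R\cap N_1$. The main obstacle is to classify these $M$: I would apply Scott's lemma (every subdirect subgroup of $T^m$ for $T$ nonabelian simple is a direct product of full strips) to $M\leqslant N_1$, and then exploit the containment $N_\omega\leqslant M$ to show that every strip of $M$ must be straight and have support contained in a single part of $\mathcal{P}$. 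This identifies the admissible $M$ with subgroups of the form $\prod_{I\in\mathcal{R}}T_I$ for partitions $\mathcal{R}$ satisfying $\mathcal{P}_1\preceq\mathcal{R}\preceq\mathcal{P}$, and $L^+$-invariance of $M$ translates to $L^+$-invariance of $\mathcal{R}$. Hence $L^+$ is maximal in $H$ precisely when no $L^+$-invariant partition lies strictly between $\mathcal{P}_1$ and $\mathcal{P}_1\vee\mathcal{P}_2$, which is the stated condition.
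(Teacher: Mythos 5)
Your proof is correct and follows essentially the same route as the paper's: you compute $G_{vw}=H\cap H^{g}=L^{+}$ and $G_{e}=L=G_{\omega}$ to identify the edge action with the quasiprimitive action on $\Omega$, use the uniqueness of the minimal normal subgroup $N$ (together with $NH=G^{+}$) for vertex-biquasiprimitivity, and reduce local primitivity to the maximality of $L^{+}$ in $H=N_{1}L^{+}$, translated into a condition on $L^{+}$-invariant partitions between $\mathcal{P}_{1}$ and $\mathcal{P}_{1}\vee\mathcal{P}_{2}$. The only difference is that you fill in details the paper leaves implicit, notably the Dedekind correspondence $R\mapsto R\cap N_{1}$ and the Scott's-lemma classification of the intermediate subgroups underlying the partition criterion.
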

\begin{proof}
Let  $v$ be the vertex corresponding to the
coset $H$ and $w$ be the vertex corresponding to $Hg$. Then
$e=\{v,w\}$ is an edge and $G_{vw}=H\cap H^g=(N_1\cap N_2)L^{+}$.
Elements of $N_1\cap N_2$ are constant on the parts of
$\mathcal{P}_1$ and the parts of $\mathcal{P}_2$, hence are constant
on the parts of $\mathcal{P}_1\vee\mathcal{P}_2=\mathcal{P}$. Thus 
$N_1\cap N_2=L\cap N$. Hence $G_{vw}=L^+$ and $G_e=L$. It follows that
$G^{\Omega}\cong G^{E\Gamma}$ and so $\Gamma$ is
$G$-edge-quasiprimitive with $G^{E\Gamma}$ of type SD or CD. Moreover,
$G^+$ has two orbits on $V\Gamma$ and so $\Gamma$ is bipartite. Since
$N$ is the unique minimal normal subgroup of $G$ and has two vertex
orbits it follows that $G$ is vertex-biquasiprimitive. Further,
$G_{vw}=L^+$
is maximal in $G_v=H$ if and only if $\mathcal{P}_1$ is the  coarsest
$L^+$-invariant partition of $\mathcal{P}_1\vee\mathcal{P}_2$ refined
by $\mathcal{P}_1$. Hence the statement regarding local
primitivity follows.
\end{proof}

We now demonstrate the various vertex actions which can be yielded by Construction \ref{con:SDCDtrans}.
\begin{example}
\label{eg:SD}
{\rm A suitable choice for $G$ primitive of type SD in Construction
\ref{con:SDCDtrans}, is $N\rtimes K$, where $K=S_d\Wr S_2$ for some
$d\geq 3$, $N=T^{d^2}$ and $G^+=N\rtimes S_d^2$. Here  
$\mathcal{P}_1$ corresponds to the set of orbits of $1\times S_d$ on
the $d^2$ simple direct factors of $N$ (that is, the ``horizontal''
blocks) while $\mathcal{P}_2$ corresponds to set of orbits of
$S_d\times 1$ (that is, the ``vertical blocks'').  Note that $G^+$ is
primitive of type CD on each of its vertex orbits and $G$ is
vertex-biprimitive.
}%
\end{example}
\begin{example}
\label{eg:SDHC}
{\rm Let $G=T^4\rtimes \la (1,3,2,4)\ra$. Here
$\mathcal{P}_1=\{\{1,4\},\{2,3\}\}$,  
$\mathcal{P}_2=\{\{1,3\},\{2,4\}\}$, and
$\mathcal{P}=\{\{1,2,3,4\}\}$. Then $G$ is quasiprimitive but not
primitive of type SD on edges and $G^+=T^4\rtimes\la (1,2)(3,4)\ra$ is
primitive of type HC on each vertex orbit.
}%
\end{example}
\begin{example}
\label{eg:CDCD}
{\rm A suitable choice
for  $G$ primitive of type CD is $G=N\rtimes K$ where $N=T^{d^2m}$
and $K=(S_d^2)^m.2\rtimes S_m$ such that $K$ preserves the partition 
$\mathcal{P}$ of $m$ blocks of size $d^2$ with $d\geq 3$. Here $K$ has
an index two 
subgroup $K_1=S_d^2\Wr S_m$ with two systems of imprimitivity
$\mathcal{P}_1$ and $\mathcal{P}_2$ with $dm$ parts of size $d$,
interchanged by $K$. The partition $\mathcal{P}_1$ is the set of
orbits of $(1\times S_d)^m$ on the set of $d^2m$ simple direct factors
of $N$ (the set of horizontal blocks in each part of $\mathcal{P}$)
while $\mathcal{P}_2$ is the set of orbits of $(S_d\times 1)^m$ (the
set of vertical blocks of each part of $\mathcal{P}$). Moreover,  
$\mathcal{P}_1\vee \mathcal{P}_2=\mathcal{P}$. Note that
$G^+$ is primitive of type CD on each of its orbits on
$V\Gamma$ and $G$ is vertex-biprimitive. 
}%
\end{example}
\begin{example}
\label{eg:CDHC}
{\rm Let $G=T^8\rtimes K$ where 
$$K=\la (1,2)(3,4)(5,6)(7,8),(1,5)(2,6)(3,7)(4,8),
(1,3,2,4)(5,8,6,7)\ra\cong D_8.$$ Then $K$ has an index 2 subgroup
$K_1=\la (1,2)(3,4)(5,6)(7,8),(1,5)(2,6)(3,7)(4,8)\ra$ which preserves the two
partitions  $\mathcal{P}_1=\{\{1,4\},\{2,3\},\{5,8\},\{6,7\}\}$ and
$\mathcal{P}_2=\{\{1,3\},\{2,4\},\{5,7\},\{6,8\}\}$. Moreover,
$\mathcal{P}_1\vee\mathcal{P}_2= \{\{1,2,3,4\},\{5,6,7,8\}\}$. Thus
$G^+=T^8\rtimes K_1$ is primitive of type HC on each vertex orbit
while $G$ is quasiprimitive but not primitive of type CD on
edges.
}%
\end{example}
\begin{example}
\label{eg:CDG+notqp}
{\rm If $G$ is edge-quasiprimitive but not edge-primitive it is
not even necessary for $G^+$ to be quasiprimitive on each orbit.
For example, let $G=N\rtimes K$ where
$N=T^{4d}$ and $K=(S_d\Wr S_2)\Wr S_2$ such that $K$ preserves the
partition $\{\{1,\ldots,d\},\{d+1,\ldots,2d\},
\{2d+1,\ldots,3d\},\{3d+1,\ldots,4d\}\}$. Now $K$ has an index two
subgroup $K^+=(S_d\Wr S_2)^2$  which has two orbits of size $2d$ on
$\{1,\ldots,4d\}$ and acts imprimitively on each orbit. Then with
$G^+=N\rtimes K^+$, and the two partitions
$\mathcal{P}_1=\{\{1,\ldots,2d\},\{2d+1,\ldots,3d\},\{3d+1,\ldots,4d\}\}$
and
$\mathcal{P}_2=\{\{1,\ldots,d\},\{d+1,\ldots,2d\},\{2d+1,\ldots,4d\}\}$,
Construction \ref{con:SDCDtrans} yields a $G$-edge-quasiprimitive
graph such that $G^+$ is not quasiprimitive on either of its orbits
(since the strips of $N_v$ are not all of equal length). 
}%
\end{example}

\section{Analysing the quasiprimitive and primitive types}
\label{sec:proof}

In this section we determine all the possible types of edge and vertex
actions of edge-quasiprimitive graphs (Theorem \ref{thm:edgeqp}). From
this, after a bit more work we deduce Theorem \ref{thm:eprim}.
By Lemmas \ref{lem:qpfaithful}, \ref{lem:qpbiqp} and
\ref{lem:faithful} there are three types of vertex actions for
$G$-edge-quasiprimitive graphs to consider:   
\begin{itemize}
\item $G$-vertex-intransitive where $G$ acts faithfully and
  quasiprimitively on both orbits; 
\item $G$-vertex-quasiprimitive;
\item $G$-vertex-biquasiprimitive and $G^+$ faithful on each orbit.
\end{itemize}
We go through each of the 8 types of quasiprimitive groups as
possibilities for the edge action and determine if there is a suitable
vertex action in each case.

\begin{lemma}
Let $\Gamma$ be a connected $G$-edge-quasiprimitive graph such that
$G$ is of type HA on edges. Then $\Gamma$ is either a cycle of prime
length or a complete bipartite graph. 
\end{lemma}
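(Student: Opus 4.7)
The strategy is to exploit the two defining features of type HA: $G$ has a unique minimal normal subgroup $N$ which is elementary abelian, and $N$ acts regularly on $E\Gamma$ (because any nontrivial normal subgroup of a quasiprimitive group is transitive on the underlying set, and a transitive abelian group is regular). I would then compare $|N|=|E\Gamma|$ with the orbit sizes of $N$ on $V\Gamma$ in each of the vertex-action regimes allowed by Lemmas \ref{lem:qpfaithful} and \ref{lem:qpbiqp}.

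First I would dispose of the non-vertex-transitive cases. By Lemma \ref{lem:qpfaithful}, either $\Gamma$ is a star (in which case it is complete bipartite and we are done) or $\Gamma$ is bipartite with $G$ faithful and quasiprimitive on each of its two orbits $\Delta_1,\Delta_2$. In the latter situation connectedness of $\Gamma$ together with edge-transitivity of $N$ force $N$ to be transitive, hence regular, on each bipartite half, so $|N|=|\Delta_1|=|\Delta_2|=|E\Gamma|$. Every vertex therefore has valency one, and the only connected realisation is $K_{1,1}$, which is complete bipartite.

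For the vertex-transitive case, Lemma \ref{lem:qpbiqp} splits the analysis in two. If $G$ is vertex-quasiprimitive then $N$ is regular on $V\Gamma$, so $|V\Gamma|=|E\Gamma|$ and $\Gamma$ is a connected $2$-regular graph, i.e.\ a cycle $C_n$. The main step is to observe that any abelian regular subgroup of $\Aut(C_n)=D_{2n}$ must coincide with the rotation subgroup $C_n$, which together with $N$ being elementary abelian forces $n$ to be prime. If instead $G$ is vertex-biquasiprimitive, Lemma \ref{lem:faithful} directly yields $\Gamma=K_{n,n}$ unless $G^+$ is faithful on each orbit; in that remaining subcase, the uniqueness of $N$ combined with $G$ being biquasiprimitive (and so not quasiprimitive) forces $N$ itself to have two orbits on $V\Gamma$. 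These orbits must coincide with the bipartite halves, and the valency-one argument from the second paragraph reappears to pin $\Gamma$ down to $K_{1,1}$.

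The only nontrivial step is the cycle rigidity observation in the vertex-quasiprimitive case (regular abelian subgroup of $D_{2n}$ is the cyclic rotation group, hence $n$ prime once one demands elementary abelian). Everything else is a clean dichotomy driven by whether $N$ acts regularly on all of $V\Gamma$ (valency two, giving a cycle of prime length) or only on each bipartite half (valency one, collapsing to the degenerate complete bipartite graph $K_{1,1}$), with the Lemma \ref{lem:faithful} escape clause supplying the genuine $K_{n,n}$ examples.
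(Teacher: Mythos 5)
Your overall strategy is the same as the paper's: let $N$ be the unique minimal normal subgroup, note that $N$ is abelian and edge-transitive, hence regular on $E\Gamma$, and compare $|N|=|E\Gamma|$ with the sizes of the $N$-orbits on vertices in each vertex-action regime. The bipartite and biquasiprimitive branches are handled correctly (the paper phrases the endgame as ``twice as many vertices as edges contradicts connectedness'' rather than collapsing to $K_{1,1}$, but it is the same count), and your use of Lemma \ref{lem:qpfaithful} for the vertex-intransitive case is if anything more explicit than the paper, which passes silently to the vertex-transitive situation (justified because type HA quasiprimitive groups are primitive, so Lemma \ref{lem:transitive} applies).

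There is, however, one concrete gap, and it sits exactly in the step you flag as the main one. The assertion that every abelian regular subgroup of $\Aut(C_n)=D_{2n}$ (in its action on vertices) coincides with the rotation subgroup is false for $n=4$: the Klein four-group $\{1,(0\,2)(1\,3),(0\,1)(2\,3),(0\,3)(1\,2)\}$ is an elementary abelian subgroup of $D_8$ acting regularly on the four vertices of $C_4$, and it is not the rotation group. Since this candidate is elementary abelian, your closing implication ``elementary abelian forces $n$ prime'' does not rule out $n=4$, so the argument as written has a hole there. The repair is short and uses information you already established: $N$ is regular on $E\Gamma$ as well as on $V\Gamma$, and every reflection of $C_n$ fixes either a vertex or an edge, so $N$ contains no reflections; hence $N$ lies in the rotation subgroup, is cyclic, and being elementary abelian has prime order. (Concretely, $(0\,1)(2\,3)$ above fixes the edge $\{0,1\}$, violating edge-regularity.) The paper avoids the issue altogether by noting that a quasiprimitive group of type HA is in fact primitive, so $G$ is primitive on the $n$ edges of the cycle, and the edge set of $C_n$ carries a nontrivial block system whenever $n$ is composite.
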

\begin{proof}
The unique minimal normal subgroup $N$ of $G$ is elementary abelian
and $G$ is in fact edge-primitive. Since $N$ is
edge-transitive, it is either vertex-transitive or has two orbits. If
$N$ is vertex-transitive, then since $N$ is abelian it acts
regularly on $V\Gamma$ and so $|V\Gamma|=|E\Gamma|$. Hence $\Gamma$ is
a cycle and by the primitivity of $G$ on $E\Gamma$ it follows that
$\Gamma$ has prime length. If $N$ has two orbits, then $G$ is
biquasiprimitive on vertices and so by Lemma \ref{lem:faithful}, either $\Gamma$
is complete bipartite or $N$ acts
faithfully on each orbit. In the latter case, $N$ acts
regularly on each orbit and so there are twice as many vertices as
edges. This contradicts the fact that $\Gamma$ is connected and so
$\Gamma$ is complete bipartite. 
\end{proof}

\begin{lemma}
Let $\Gamma$ be a connected $G$-edge-quasiprimitive graph such that
$G$ is quasiprimitive of type HS or HC on edges. Then $\Gamma$ is a
complete bipartite graph. 
\end{lemma}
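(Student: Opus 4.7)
The plan is to exploit the two minimal normal subgroups $M,N$ of $G$. By definition of types HS and HC, both $M$ and $N$ are regular on $E\Gamma$ and each is isomorphic to $T^k$ for some nonabelian simple group $T$ (with $k=1$ in case HS and $k\geqslant 2$ in case HC). Since $M\cap N=1$, the subgroups $M$ and $N$ centralise each other in $G$. The workhorse throughout will be the following standard fact, which I would verify at the outset: if $X,Y\leqslant\Sym(\Omega)$ commute, with $X$ transitive and $Y$ faithful, then for $\alpha\in\Omega$ any $y\in Y_\alpha$ satisfies $(\alpha^x)^y=\alpha^{xy}=\alpha^{yx}=\alpha^x$ for every $x\in X$, so $y$ fixes every point of $\Omega$ and hence $y=1$; thus $Y$ is semiregular, and if $Y$ is also transitive then $Y$ is regular.

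First I would show $\Gamma$ must be bipartite. Otherwise $G$ is vertex-transitive, so by Lemma \ref{lem:qpbiqp} it is either vertex-quasiprimitive or vertex-biquasiprimitive; the latter forces a bipartition, so we are in the vertex-quasiprimitive case. Then both $M$ and $N$ are vertex-transitive, and since $G\leqslant\Aut(\Gamma)$ acts faithfully on $V\Gamma$, they are also faithful there. The standard fact gives both $M,N$ vertex-regular, so $|V\Gamma|=|M|=|E\Gamma|$. Hence $\Gamma$ is connected and $2$-regular, forcing $\Gamma\cong C_n$ for some $n$, and $M$ embeds as a regular subgroup of $\Aut(C_n)=D_{2n}$. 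But $D_{2n}$ is soluble while $M\cong T^k$ has a nonabelian composition factor, a contradiction.

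So $\Gamma$ is bipartite with halves $\Delta_1,\Delta_2$. I would next argue that both $M$ and $N$ act transitively on each $\Delta_i$. If $G$ is vertex-intransitive, Lemma \ref{lem:qpfaithful} gives $G$ quasiprimitive on both $\Delta_i$, so all nontrivial normal subgroups are transitive on each. In the vertex-biquasiprimitive case, one of $M,N$, say $M$, has the bipartite halves as orbits; if $N$ were vertex-transitive it would have to swap $\Delta_1$ and $\Delta_2$ (since it commutes with $M$ and $M$ is transitive on each half), and then $N\cap G^+$ would be a subgroup of $N$ of index $2$. But every normal subgroup of $T^k$ is a direct product of some of the $k$ simple factors, so its index in $T^k$ is a power of $|T|\geqslant 60$; since subgroups of index $2$ are automatically normal, $T^k$ admits no such subgroup. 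Hence $N$ also has the two bipartite halves as orbits.

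Finally, suppose for contradiction that $\Gamma$ is not complete bipartite. Then Lemma \ref{lem:qpfaithful} (in the vertex-intransitive case) or Lemma \ref{lem:faithful} (in the vertex-biquasiprimitive case) ensures that $M$ and $N$ act faithfully on $\Delta_1$. Since they commute and are both transitive on $\Delta_1$, with $M$ faithful, the standard fact yields $M$ regular on $\Delta_1$, so $M_v=1$ for $v\in\Delta_1$. A direct count using the edge-regularity of $M$ and the fact that $M$ preserves the bipartition shows that the edges of $\Gamma$ through $v$ are in bijection with $M_v$, so $\Gamma$ has valency $1$ on the $\Delta_1$ side; being connected, $\Gamma\cong K_{1,1}$, and then $|M|=|E\Gamma|=1$ contradicts $M$ nonabelian. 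Hence $\Gamma$ is complete bipartite. The main obstacle is the organisation of the vertex-action case analysis, and in particular ruling out $N$ being vertex-transitive in the biquasiprimitive case via the absence of index-$2$ subgroups in $T^k$.
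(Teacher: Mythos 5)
Your proof is correct and follows essentially the same route as the paper's: both minimal normal subgroups are regular on edges, the absence of index-two subgroups of $T^k$ keeps them inside $G^+$, commuting transitive subgroups with one of them faithful forces vertex-regularity, and a vertex/edge count then eliminates the non-complete-bipartite case. The only slip is in the last line: valency one on the $\Delta_1$ side together with connectedness gives a star $K_{1,|\Delta_1|}$ rather than $K_{1,1}$, but a star is complete bipartite, so the contradiction with your standing assumption still goes through.
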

\begin{proof}
Let $N_1$ and $N_2$ be the two minimal normal subgroups of $G$. 
Since $G$ is of type HS or HC on edges, it is edge-primitive and so by Lemma
\ref{lem:transitive} either $\Gamma$ is a star (and hence complete bipartite), or $G$ is vertex-transitive. We may assume that $G$ is vertex-transitive. Then by Lemma
\ref{lem:qpbiqp}, $G^{V\Gamma}$ is either quasiprimitive or
biquasiprimitive. If $G^{V\Gamma}$ is 
quasiprimitive then since $G$ has two minimal normal subgroups,
$G^{V\Gamma}$ is of type HS or HC, respectively. Hence $N_1$ and $N_2$
are vertex-regular and so $|E\Gamma|=|V\Gamma|$. Thus $\Gamma$ is a
cycle, contradicting $N_1$ being insoluble. Thus $G^{V\Gamma}$ is
biquasiprimitive. Suppose 
that $\Gamma$ is not complete bipartite. Since
neither $N_1$ nor $N_2$ has an index two subgroup, it follows that
$N_1,N_2\leqslant G^+$ and by Lemma \ref{lem:faithful}, both act
transitively and faithfully on each $G^+$ orbit. Since $N_1$
centralises $N_2$, it follows that $N_1$ and $N_2$ act regularly on
each $G^+$ orbit (\cite[Theorem 4.2A]{dixon}). This implies that there
are twice as many vertices as edges, contradicting $\Gamma$ being
connected. Hence $\Gamma$ is a complete bipartite graph. 
\end{proof}

\begin{lemma}
Let $\Gamma$ be a $G$-edge-quasiprimitive graph which is of type AS on
edges. Then either $G$ is quasiprimitive of type AS on vertices or
$\Gamma$ is bipartite and $G^+$ acts faithfully and quasiprimitively
of type AS on both parts of the bipartition.
\end{lemma}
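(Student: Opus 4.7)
My approach is to examine the three possible vertex actions for a $G$-edge-quasiprimitive graph enumerated at the start of Section \ref{sec:proof} and verify the desired conclusion in each. Since $G$ is of type AS on edges, $G$ is almost simple, so $T \leq G \leq \Aut(T)$, where $T = \soc(G)$ is a nonabelian simple group and the unique minimal normal subgroup of $G$. By edge-quasiprimitivity, the nontrivial normal subgroup $T$ is transitive on $E\Gamma$, and as in Lemma \ref{lem:con} I may assume $\Gamma$ is connected.

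If $G$ is vertex-quasiprimitive, then since the unique minimal normal subgroup $T$ is a nonabelian simple group, the vertex action is automatically of type AS. If $G$ is vertex-intransitive, Lemma \ref{lem:qpfaithful} yields (setting aside the trivial star case $K_{1,n}$, which is handled directly) that $\Gamma$ is bipartite and $G = G^+$ acts faithfully and quasiprimitively on each bipartite half, again of type AS because $G$ is almost simple with socle $T$.

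The main case is $G$ vertex-biquasiprimitive. Lemma \ref{lem:faithful} gives that $G^+$ acts faithfully on each orbit $\Delta_1, \Delta_2$ (the complete bipartite graph $K_{n,n}$ being checked separately), and by edge-transitivity together with connectedness these orbits form the bipartition of $\Gamma$. Since $T$ is simple and $|G : G^+| = 2$, it is impossible for $T \cap G^+$ to have index two in $T$, so $T \leq G^+$; hence $T \leq G^+ \leq \Aut(T)$, making $G^+$ almost simple with socle $T$. Moreover, $T$ preserves each $\Delta_i$ (being contained in $G^+$) and is edge-transitive on the connected bipartite graph $\Gamma$, from which a short argument gives that $T$ is transitive on each $\Delta_i$. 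Every nontrivial normal subgroup of the almost simple group $G^+$ contains its socle $T$, so the action of $G^+$ on each $\Delta_i$ is quasiprimitive of type AS, as required. The one non-routine step is this biquasiprimitive verification, which is a short bookkeeping exercise combining simplicity of $T$, the index-two constraint, and edge-transitivity on the bipartite graph; the only subtle point to watch is the $K_{n,n}$ exception in Lemma \ref{lem:faithful}, which one disposes of by observing that $T\leqslant G^+$ is itself transitive on each half.
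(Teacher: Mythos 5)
Your proof is correct and takes essentially the same route as the paper, whose own proof is a one-line observation that every nontrivial normal subgroup of $G$ is almost simple with socle $T$, combined with Lemmas \ref{lem:qpfaithful} and \ref{lem:qpbiqp}; you have simply written out the case analysis and the verification that $T\leqslant G^+$ and that $G^+$ is almost simple with socle $T$ in the biquasiprimitive case. The extra details you supply (no index-two subgroup of $T$, the centraliser/faithfulness argument, the star and $K_{n,n}$ caveats) are exactly what the paper's phrase ``comparing isomorphism types'' is compressing.
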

\begin{proof}
Noticing that any nontrivial normal subgroup of $G$ is almost simple,
the result follows by comparing isomorphism types and Lemmas
\ref{lem:qpfaithful} and \ref{lem:qpbiqp}. 
\end{proof}

Before dealing with the SD and CD cases we need the following lemma.

\begin{lemma}
\label{lem:Ne}
Let $\Gamma$ be a connected $G$-edge-quasiprimitive graph and let $N$
be a normal subgroup of $G$ such that $N\cong T^k$ for some
finite nonabelian simple group $T$. Let $e=\{v,w\}$ be an edge of
$\Gamma$. Then $N_e\neq N_v$.
\end{lemma}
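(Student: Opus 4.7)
The plan is to suppose $N_e = N_v$ and derive a contradiction by analysing the possible orbit patterns of $N$ on $V\Gamma$. First I would observe that since $N_v$ fixes $v$ and stabilises the unordered pair $\{v,w\}$, it must also fix $w$, so $N_v \leq N_w$ and in particular $N_v = N_{vw}$. Since $N \cong T^k$ is a nontrivial normal subgroup of the edge-quasiprimitive group $G$, it is transitive on $E\Gamma$, yielding the orbit--stabiliser equality $|E\Gamma| = |N:N_e| = |N:N_v|$. Because $\Gamma$ is connected and $N$ is edge-transitive, $N$ has at most two orbits on $V\Gamma$, and I would split the argument along this dichotomy.

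If $N$ is vertex-transitive then $|V\Gamma| = |N:N_v| = |E\Gamma|$, and a connected graph with equal numbers of vertices and edges is a cycle $C_n$. The image of $N = T^k$ in $\Aut(C_n)\cong D_n$ is then a transitive subgroup that is also a quotient of the perfect group $T^k$, hence perfect. But the commutator subgroup of $D_n$ is cyclic (either $C_n$ or $C_{n/2}$), so the only perfect subgroup of $D_n$ is trivial---contradicting transitivity for $n \geq 2$. This rules out the vertex-transitive case.

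In the remaining case $N$ has two vertex-orbits $\Delta_1 \ni v$ and $\Delta_2 \ni w$, so $\Gamma$ is bipartite with these parts and $|\Delta_1| = |N:N_v| = |E\Gamma|$. Counting edge-endpoints in $\Delta_1$ and using $N$-transitivity on $\Delta_1$ gives $|E\Gamma| = |\Delta_1|\cdot\deg(v)$, which forces $\deg(v) = 1$ and hence $\Gamma \cong K_{1,|\Delta_1|}$, a star with centre $w$. This is the step I expect to be the main obstacle: a star is not immediately incompatible with $N \cong T^k$ being a nontrivial normal subgroup of an edge-quasiprimitive $G$, so either the lemma is being applied in a context where trivial (star-shaped) graphs are tacitly excluded---consistent with the surrounding SD/CD analysis in which $k \geq 2$ and the graphs under consideration are genuinely non-trivial---or one extracts a further contradiction from a more careful use of the full edge-quasiprimitivity of $G$ together with the specific structure of $N \cong T^k$. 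Once the star configuration is disposed of, the two cases exhaust the possibilities and the lemma follows.
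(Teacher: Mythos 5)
Your proposal follows essentially the same route as the paper: use edge-transitivity of the nontrivial normal subgroup $N$ to get $|E\Gamma|=|N:N_e|=|N:N_v|$, split on whether $N$ has one or two vertex-orbits, kill the cycle case because $T^k$ cannot embed in a dihedral group (the paper simply says this ``contradicts $N\leqslant\Aut(\Gamma)$''; your perfect-subgroup argument is a correct expansion of that), and in the two-orbit case observe $|\Delta_1|=|E\Gamma|$.

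The obstacle you flag at the end is genuine, and it is worth knowing that it is an obstacle for the paper's proof as well, not just yours. The paper asserts that $|\Delta_1|=|E\Gamma|$ ``contradicts $\Gamma$ being connected,'' but that inference only works when $|\Delta_2|\geq 2$ (so that $|E\Gamma|\geq|\Delta_1|+|\Delta_2|-1>|\Delta_1|$); when $|\Delta_2|=1$ one gets exactly the star you describe, with no contradiction. Indeed the lemma as literally stated is false for stars: take $G=N=A_5\cong T^1$ acting on $K_{1,5}$ with $v$ a leaf and $w$ the centre; then $G$ is edge-primitive, $\Gamma$ is connected, and $N_e=N_v\cong A_4$. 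So you should not expect to extract a further contradiction from edge-quasiprimitivity alone --- the correct resolution is the one you guessed: every application of this lemma in the paper (Proposition \ref{prn:SDCD} and Corollaries \ref{cor:SDeprim}, \ref{cor:CDeprim}) is to a graph that is vertex-transitive and/or not complete bipartite, which excludes stars, and the lemma should carry that hypothesis (or the proof should end by noting that the two-orbit case forces $\Gamma\cong K_{1,n}$, which is excluded). With that hypothesis added, your argument is complete and, if anything, more careful than the paper's.
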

\begin{proof}
Suppose that $N_e=N_v$. Since $N$ is edge-transitive, it has at most
two orbits on vertices. If $N$ is vertex-transitive then
$|V\Gamma|=|E\Gamma|$ and so $\Gamma$ is a cycle. This contradicts
$N\leqslant\Aut(\Gamma)$. Hence $N$ has two orbits on vertices and
$\Gamma$ is bipartite. Let $\Delta_1$ be the bipartite half containing
$v$. Then $|\Delta_1|=|E\Gamma|$.  This contradicts $\Gamma$ being
connected and so $N_e\neq N_v$.
\end{proof}

Next we deal with the SD and CD cases.

\begin{proposition}
\label{prn:SDCD}
Let $\Gamma$ be a $G$-edge-quasiprimitive,
$G$-vertex-transitive connected graph which is not
complete bipartite and such that $G$ is quasiprimitive of type SD or
CD on edges. Let $N\cong T^k$ be the unique minimal normal subgroup of
$G$, let $e=\{v,w\}$ be an edge and let $\mathcal{P}$ be the
partition of the set of $k$ simple direct factors of $N$ given by the
set of supports of the full strips of $N_e$. Then the following all hold.
\begin{enumerate}
\item $\Gamma$ is bipartite and $G^+$ acts faithfully on each
  bipartite half.
\item There exists a nontrivial $G^+$-invariant partition
  $\mathcal{P}_1$ of $\{1,\ldots,k\}$ such that $N_v$ is the product
  of full strips whose supports are the parts of $\mathcal{P}_1$. 
\item There exists a nontrivial $G^+$-invariant partition
  $\mathcal{P}_2$ of $\{1,\ldots,k\}$ such that $N_w$ is the product
  of full strips whose supports are the parts of $\mathcal{P}_2$.
\item $\mathcal{P}_1\vee\mathcal{P}_2=\mathcal{P}$ and $G$
  interchanges $\mathcal{P}_1$ and $\mathcal{P}_2$. 
\item $\Gamma$ is isomorphic to the graph yielded
  by Construction \ref{con:SDCDtrans} using $G$, $\mathcal{P}_1$ and
  $\mathcal{P}_2$. 
\end{enumerate}
\end{proposition}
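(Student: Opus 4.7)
The plan is to show that $N$ cannot act vertex-transitively on $\Gamma$. Since $N_e$ is a product of full strips of $N$ it is a direct product of copies of the nonabelian simple group $T$, has trivial abelianisation, and therefore admits no subgroup of index two. So if $N$ were vertex-transitive it could not be arc-transitive, or $N_{vw}$ would have index two in $N_e$; in the remaining half-arc-transitive case $N_e = N_v \cap N_w$. I would then pick $n \in N$ with $v^n = w$ and use that conjugation by $n$ acts on each factor $T_i$ by an inner automorphism, hence preserves support partitions of any product-of-strips subgroup. Thus $N_w = N_v^n$ has the same support partition $\mathcal{P}'$ as $N_v$; Lemma \ref{lem:intersectstrips} then forces $N_e$ to have support partition $\mathcal{P}' \vee \mathcal{P}' = \mathcal{P}'$, so $\mathcal{P}' = \mathcal{P}$, and matching the equal-order full-diagonal strips of $N_e$ and $N_v$ one inside the other yields $N_v = N_e$, contradicting Lemma \ref{lem:Ne}. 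Hence $N$ has two orbits on $V\Gamma$, which by connectedness and edge-transitivity of $N$ are the bipartite halves $\Delta_1, \Delta_2$; vertex-transitivity of $G$ then forces $G$ to swap them, and Lemma \ref{lem:faithful} gives the faithful $G^+$-action since $\Gamma \neq K_{n,n}$.

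\textbf{Parts (2)-(4): the two partitions.} With $N \leqslant G^+$ we have $N_e = N_v \cap N_w$. The subdirect structure of $N_e$ propagates to $N_v$ and $N_w$, and the Scott-type decomposition quoted before Lemma \ref{lem:intersectstrips} writes each as a direct product of disjoint full strips with support partitions $\mathcal{P}_1$ and $\mathcal{P}_2$; Lemma \ref{lem:intersectstrips} then gives $\mathcal{P}_1 \vee \mathcal{P}_2 = \mathcal{P}$. For $G^+$-invariance I would reuse the key observation from Part~(1): $N$-conjugation preserves support partitions, so every vertex of $\Delta_1 = v^N$ has stabiliser with support partition $\mathcal{P}_1$; since $G^+$ preserves $\Delta_1$, for $g \in G^+$ the identity $N_{v^g} = N_v^g$ forces $g$ to preserve $\mathcal{P}_1$. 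The symmetric argument handles $\mathcal{P}_2$, and any $g \in G \setminus G^+$ carries $\Delta_1$ to $\Delta_2$ and hence swaps $\mathcal{P}_1$ with $\mathcal{P}_2$. Distinctness follows because $\mathcal{P}_1 = \mathcal{P}_2$ would collapse to $\mathcal{P}_1 = \mathcal{P}$ and then $N_v = N_e$, contradicting Lemma \ref{lem:Ne}. Nontriviality is similar: $\mathcal{P}_1 = \{\{1,\ldots,k\}\}$ collapses to $N_v = N_e$ in the SD case, and in the CD case forces $\mathcal{P} = \mathcal{P}_1 \vee \mathcal{P}_2$ to have a single part, contrary to the CD hypothesis.

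\textbf{Part (5): matching Construction \ref{con:SDCDtrans}.} First I would establish arc-transitivity of $G$: as $G = NL$ with $L = G_e$, the images of $L$ and $G$ on $\{T_1,\ldots,T_k\}$ coincide, so some $g \in L$ acts on this set by swapping $\mathcal{P}_1$ with $\mathcal{P}_2$; such $g$ lies outside $G^+$, and since $g \in G_e$ it must swap $v$ and $w$. Setting $L^+ := L \cap G^+ = G_{vw}$, we have $g \in L \setminus L^+$ with $g^2 \in L^+$. Absorbing the automorphism twists on the strips by conjugating in $\Sym(V\Gamma)$ exactly as in the setup of Construction \ref{con:SDCDtrans}, I may take $N_v = N_1 := \prod_{I \in \mathcal{P}_1} T_I$ and correspondingly $N_w = N_v^g = N_2$. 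Both $N_1$ and $L^+$ fix $v$, giving $N_1 L^+ \leqslant G_v$; conversely $G_v$ normalises $N_v = N_1$ and so $G_v \leqslant N_G(N_1) = H$. The identity $H = N_1 L^+$ established in the proof of Lemma \ref{lem:tranSDCDcon} then yields $G_v = H$, and therefore $\Gamma = \Cos(G, G_v, G_v g G_v) = \Cos(G, H, HgH)$ is the graph produced by Construction \ref{con:SDCDtrans} from $G$, $\mathcal{P}_1$ and $\mathcal{P}_2$.

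\textbf{Main obstacle.} The delicate step is Part~(1): ruling out $N$-vertex-transitivity rests on the observation that $N$-conjugation preserves support partitions of strips, so $\mathcal{P}' \vee \mathcal{P}'$ collapses to $\mathcal{P}'$ and forces $N_v = N_e$. This same observation then drives the $G^+$-invariance of $\mathcal{P}_1, \mathcal{P}_2$ in Parts~(2)-(4) and underlies the identification $G_v = N_G(N_1)$ needed to match the coset graph construction.
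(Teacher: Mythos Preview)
Your proof is correct and follows essentially the same route as the paper: both use that $N_e$ has no index-two subgroup to get $N_e=N_{vw}$, apply the Scott decomposition and Lemma~\ref{lem:intersectstrips} to obtain $\mathcal{P}_1,\mathcal{P}_2$ with $\mathcal{P}_1\vee\mathcal{P}_2=\mathcal{P}$, and exploit the fact that $N$-conjugation preserves support partitions to force $N$ vertex-intransitive and to make $\mathcal{P}_1,\mathcal{P}_2$ $G^+$-invariant. The only differences are organisational (you establish bipartiteness first by contradiction, whereas the paper derives it after showing $\mathcal{P}_1\neq\mathcal{P}_2$), and the identity $H=N_1L^+$ you invoke is established in Construction~\ref{con:SDCDtrans} itself rather than in the proof of Lemma~\ref{lem:tranSDCDcon}.
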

\begin{proof}
Since $N_e\cong T^l$ for some divisor $l$ of $k$, it does not
have an index two subgroup and so $N_e=N_{vw}$.  Thus $\pi_i(N_v)=T$
for each $i$, and so by a well known lemma, (see for example \cite[p
  328]{Scott}) there exists a partition $\mathcal{P}_1$ of
$\{1,\ldots,k\}$ such that $N_v$ is the product of full strips whose
supports are the parts of $\mathcal{P}_1$. Similarly, $\pi_i(N_w)=T$
for each $i$ and so there exists a partition $\mathcal{P}_2$ of
$\{1,\ldots,k\}$ such that $N_w$ is the product of full strips whose
supports are the parts of $\mathcal{P}_2$. Since $N_e=N_v\cap N_w$,
Lemma \ref{lem:intersectstrips} implies that
$\mathcal{P}_1\vee\mathcal{P}_2=\mathcal{P}$. By Lemma \ref{lem:Ne}
$N_e\neq N_v$, and so $\mathcal{P}_1,\mathcal{P}_2\neq \mathcal{P}$,
hence $\mathcal{P}_1\neq \mathcal{P}_2$. Thus $N$ is
vertex-intransitive and $\mathcal{P}_1,\mathcal{P}_2$ are nontrivial
partitions of $\{1,\ldots,k\}$. Since $N$ is edge-transitive, it
follows that $\Gamma$ is  bipartite with the two bipartite halves
being $N$-orbits. By Lemmas \ref{lem:qpfaithful} and
\ref{lem:faithful}, $G^+$ is faithful on each bipartite half and so
(1) holds. Moreover, $G^+=NG_v=NG_w$ and so $G^+$, $G_v$ and $G_w$ all
induce the same  permutation group on the set of $k$ simple direct
factors of $N$.  Hence $\mathcal{P}_1 $ and $\mathcal{P}_2$ are
$G^+$-invariant and so parts (2) and (3) hold. Furthermore, since $G$
is vertex-transitive there exists $g\in G$ such that $v^g=w$. Thus $G$
interchanges $\mathcal{P}_1$ and $\mathcal{P}_2$ and so part (4)
holds. It remains to prove part (5).   

Conjugating by a suitable element of $\Sym(V\Gamma)$ we may assume
that $N_v$ is a product of straight full strips corresponding to the
parts of $\mathcal{P}_1$. Thus $N_v$ is the subgroup $N_1$ constructed
in Construction \ref{con:SDCDtrans}.  Since $G$ interchanges
$\mathcal{P}_1$ and $\mathcal{P}_2$, it follows that $G_v\leqslant
N_G(N_v)\leqslant G^+$. Since $G^+=NG_v$ and $N_v$ is selfnormalising
in $N$, it follows that $G_v=N_G(N_v)$. Thus $G_v$ is the subgroup $H$
given in Construction \ref{con:SDCDtrans}.  Letting $g\in G_e$ which
interchanges $v$ and $w$ and hence $\mathcal{P}_1,\mathcal{P}_2$, it
follows that $\Gamma\cong \Cos(G,H,HgH)$, the graph constructed in
Construction \ref{con:SDCDtrans}.  Thus part (5) holds.
\end{proof} 

We have the following corollaries if $G$ is edge-primitive. 

\begin{corollary}
\label{cor:SDeprim}
Let $\Gamma$ be a connected $G$-edge-primitive graph which is not
complete bipartite such that $G$ is primitive of type {\rm SD} on
edges. Then $\Gamma$ is bipartite and $G^+$ is faithful and
quasiprimitive of type {\rm CD} on each bipartite half.
\end{corollary}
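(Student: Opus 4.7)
The plan is to invoke Proposition \ref{prn:SDCD} to obtain the relevant partition data and then verify the structural conditions for type CD by showing that $N$ is the unique minimal normal subgroup of $G^+$.

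First I would note that since $\Gamma$ is not complete bipartite (and in particular not a star), Lemma \ref{lem:transitive} implies $G$ is vertex-transitive, so Proposition \ref{prn:SDCD} applies. As $G$ is of type SD on edges, the associated partition $\mathcal{P}$ coming from $N_e$ is the trivial one-part partition $\{\{1,\ldots,k\}\}$. The proposition then yields that $\Gamma$ is bipartite with $G^+$ faithful on each bipartite half, and that there exist distinct nontrivial $G^+$-invariant partitions $\mathcal{P}_1,\mathcal{P}_2$ of $\{1,\ldots,k\}$ with $\mathcal{P}_1\vee\mathcal{P}_2=\{\{1,\ldots,k\}\}$ such that $N_v$ is the product of $\ell=|\mathcal{P}_1|\geq 2$ full strips supported on the parts of $\mathcal{P}_1$. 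I would next observe that $k\geq 3$: for $k=2$ the only nontrivial partition of $\{1,2\}$ would force $\mathcal{P}_1=\mathcal{P}_2$, contradicting distinctness. Since $G$ acts primitively on $\{1,\ldots,k\}$ and $k\geq 3$, the index two subgroup $G^+$ must be transitive on the $k$ factors, for otherwise its two orbits would form a nontrivial $G$-invariant partition.

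The central step will be to show that $N$ is the unique minimal normal subgroup of $G^+$. Because $N$ has trivial centre and is the unique minimal normal subgroup of $G$, we have $C_G(N)=1$, hence $C_{G^+}(N)=1$, so every nontrivial normal subgroup $M$ of $G^+$ meets $N$ nontrivially. Any $G^+$-invariant subgroup of $N=T^k$ is in particular normal in $N$, and the normal subgroups of the direct power $T^k$ of a nonabelian simple group are precisely the subproducts $\prod_{i\in J}T_i$ for $J\subseteq\{1,\ldots,k\}$ (if a normal subgroup of $N$ meets $T_i$ trivially for every $i$ then it centralises each $T_i$ and so is trivial; otherwise one peels off factors by induction). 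For such a subproduct to be $G^+$-invariant, $J$ must be $G^+$-invariant, and transitivity of $G^+$ on $\{1,\ldots,k\}$ forces $J=\emptyset$ or $\{1,\ldots,k\}$. Hence the only normal subgroups of $G^+$ contained in $N$ are $1$ and $N$; in particular $N$ is minimal normal in $G^+$, and every nontrivial $M\triangleleft G^+$ contains $N$.

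Quasiprimitivity of $G^+$ on each bipartite half will then follow immediately, since any nontrivial normal subgroup contains $N$, which is transitive on each half (the two halves being the $N$-orbits on vertices). The type is CD because $N=T^k$ with $k\geq 2$ is the unique minimal normal subgroup and $N_v$ is a product of $\ell\geq 2$ full strips forming a subdirect product of $N$ (every index lies in some part of $\mathcal{P}_1$, so $\pi_i(N_v)=T$ for all $i$). The same conclusion applies to the other bipartite half by using $\mathcal{P}_2$ in place of $\mathcal{P}_1$. The main obstacle will be verifying the minimality of $N$ in $G^+$, which reduces cleanly to the rigidity of normal subgroups of $T^k$ combined with the transitivity of $G^+$ on the simple direct factors.
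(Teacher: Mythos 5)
Your proof is correct and follows essentially the same route as the paper's: use Proposition \ref{prn:SDCD} for the strip/partition data, deduce from primitivity of type SD that $G$, and hence the normal subgroup $G^+$, is transitive on the $k$ simple direct factors, conclude that $N$ is the (unique) minimal normal subgroup of $G^+$ so that $G^+$ is quasiprimitive on each bipartite half, and read off type CD from the fact that $N_v$ is a product of at least two full strips forming a subdirect product of $N$. Your write-up is somewhat more careful than the paper's terse argument (the $k\geq 3$ observation ruling out a trivial action of $G^+$ on the factors, and the explicit verification that every nontrivial normal subgroup of $G^+$ contains $N$), but the underlying approach is the same.
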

\begin{proof}
Since $G^{E\Gamma}$ is primitive of type SD it follows that
$\mathcal{P}=\{\{1,\ldots, k\}\}$ and $G$ acts primitively on the set
of $k$ simple direct factors of $N$. Since $G^+\norml G$ it follows
that $G^+$ acts transitively on the the set of simple direct factors of
$N$. Hence $N$ is a minimal normal subgroup of $G^+$ and so $G^+$ acts
faithfully and quasiprimitively on each orbit. By Lemma \ref{lem:Ne},
$N_e<N_v$ and so this action is of type CD. 
\end{proof}
\begin{corollary}
\label{cor:CDeprim}
Let $\Gamma$ be a connected $G$-edge-primitive graph which is not
complete bipartite such that $G$ is primitive of type {\rm CD} on
edges. Then $\Gamma$ is bipartite and $G^+$ is faithful and
quasiprimitive of type {\rm CD} on each bipartite half.
\end{corollary}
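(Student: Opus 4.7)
The plan is to mirror the proof of Corollary~\ref{cor:SDeprim}, adapting the argument to the CD setting. Using that edge-primitivity forces vertex-transitivity whenever $\Gamma$ is not a star (Lemma~\ref{lem:transitive}), I apply Proposition~\ref{prn:SDCD} to obtain that $\Gamma$ is bipartite, $G^+$ acts faithfully on each bipartite half, and there exist nontrivial $G^+$-invariant partitions $\mathcal{P}_1,\mathcal{P}_2$ of $\{1,\dots,k\}$ with $\mathcal{P}_1\vee\mathcal{P}_2=\mathcal{P}$, interchanged by $G$, such that $N_v$ is the product of full strips whose supports are the parts of $\mathcal{P}_1$.

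Next I invoke the hypothesis that $G$ is primitive of type CD on edges. By the description of type CD in Section~\ref{sec:prelim}, $G$ acts primitively on $\mathcal{P}$, and for each $P\in\mathcal{P}$ the stabiliser $G_P$ acts primitively on $P$; in particular $G$ is transitive on the set of $k$ simple direct factors of $N$. Since primitive actions are quasiprimitive and $G^+$ is a nontrivial normal subgroup of $G$ of index two, the induced action of $G^+$ on $\mathcal{P}$ is transitive, and for each $P\in\mathcal{P}$ the induced action of $G^+_P$ on $P$ is transitive. Combining these, $G^+$ acts transitively on $\{1,\dots,k\}$. Therefore $N$ is a minimal normal subgroup of $G^+$: any normal subgroup of $G^+$ contained in $N=T^k$ with $G^+$ transitive on the factors must be trivial or equal to $N$, and any normal subgroup intersecting $N$ trivially would centralise $N$, hence be trivial by faithfulness of $G$ on $E\Gamma$. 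Consequently, $G^+$ acts faithfully and quasiprimitively on each of its two vertex orbits. By Lemma~\ref{lem:Ne}, $N_e<N_v$, which forces $\mathcal{P}_1$ to have at least two parts, so this quasiprimitive action is of type CD (rather than SD).

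The main obstacle is the transitivity of $G^+$ on the $k$ factors. In the generic case where $|\mathcal{P}|\geq 3$ and every part $P$ satisfies $|P|\geq 3$, the quasiprimitivity of the primitive actions of $G$ on $\mathcal{P}$ and of $G_P$ on $P$ immediately yields the required transitivity for $G^+$. In the degenerate subcases $|\mathcal{P}|=2$ or $|P|=2$, where an index-two normal subgroup of a primitive group on $2$ points could in principle be intransitive, a separate argument is needed. There I would fall back on the maximality of $G_e$ in $G$ granted by edge-primitivity, together with the explicit coset-graph description $\Gamma\cong\Cos(G,H,HgH)$ with $H=N_1L^+$ from Proposition~\ref{prn:SDCD}(5) and Construction~\ref{con:SDCDtrans}: if $G^+$ were intransitive on the factors, then $G_e$ would be contained in a proper intermediate subgroup built from the corresponding normal decomposition of $N$, contradicting its maximality.
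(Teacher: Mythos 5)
Your overall skeleton is right --- reduce via Proposition~\ref{prn:SDCD} and Lemma~\ref{lem:Ne} to showing that $G^+$ is transitive on the $k$ simple direct factors of $N$ --- but the step where you establish that transitivity has a genuine gap. You assert that, because $G$ is primitive of type CD on edges, ``$G$ acts primitively on $\mathcal{P}$''. That is not part of the CD primitivity criterion described in Section~\ref{sec:prelim} and is false in general: for a product action $G\leqslant H\Wr S_\ell$ to be primitive one needs $H$ primitive (equivalently, $G_P$ primitive on $P$ for $P\in\mathcal{P}$) and $G$ merely \emph{transitive} on the $\ell$ blocks --- the top group could, for instance, be a transitive cyclic group of composite order, which is imprimitive. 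Without primitivity (or even quasiprimitivity) of $G$ on $\mathcal{P}$, your deduction that $G^+$ is transitive on $\mathcal{P}$ collapses: an index-two normal subgroup of a group that is merely transitive on $\mathcal{P}$ can have two orbits of size $|\mathcal{P}|/2$ for any even $|\mathcal{P}|$, so the problem is not confined to the ``degenerate subcases $|\mathcal{P}|=2$ or $|P|=2$'' that you flag. (There is also a conflation of ``$G^+$ is a nontrivial normal subgroup'' with ``$G^+$ induces a nontrivial group on $\mathcal{P}$''; a priori $G^+$ could lie in the kernel of that action.) Your proposed fallback --- that intransitivity of $G^+$ on the factors would trap $G_e$ inside a proper intermediate subgroup --- is not carried out, and it is unclear what that subgroup would be, since $G_e$ is itself transitive on the factors and so normalises neither of the two would-be normal subgroups of $N$.

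The paper closes exactly this gap by a different and sharper argument. It shows directly that $G_P\not\leqslant G^+$ for $P\in\mathcal{P}$: if $G_P\leqslant G^+$, then $G_P=G_P\cap G^+$ would act primitively on $P$ (by CD-primitivity on edges) while also preserving the block system induced on $P$ by the nontrivial $G^+$-invariant partition $\mathcal{P}_1$ of Proposition~\ref{prn:SDCD}, which properly refines $\mathcal{P}=\mathcal{P}_1\vee\mathcal{P}_2$ --- a contradiction. Hence $|G_P:G_P\cap G^+|=2$, and the index count $|G^+:G_P\cap G^+|=|G:G_P|=|\mathcal{P}|$ gives transitivity of $G^+$ on $\mathcal{P}$ with no primitivity assumption on that action; transitivity of $G_P\cap G^+$ on $P$ then follows because it is normal of index two in the group $G_P$, which induces a primitive group on $P$. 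If you want to repair your write-up, this is the missing ingredient: $\mathcal{P}_1$ must be used not only at the end to identify the type of the vertex action, but as the obstruction forcing $G_P$ to meet $G\setminus G^+$.
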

\begin{proof}
Let $N\cong T^k$ be the unique minimal normal subgroup of $G$.
Let $e$ be an edge and let $\mathcal{P}$ be the symstem of
imprimitivity of $\{1,\ldots,k\}$ given by the set of supports of the
strips of $N_e$. Since $G$ is primitive of type CD on edges it follows
that for $P\in\mathcal{P}$, $G_P$ acts primitively on $P$. Also
$|G_P:G_P^+|\leq 2$.  If $|G_P:G_P^+|=1$ then $G_P^+$ acts primitively
on $P$. However, by Proposition \ref{prn:SDCD},
$\mathcal{P}=\mathcal{P}_1\vee\mathcal{P}_2$ where $\mathcal{P}_1$ and
$\mathcal{P}_2$ are preserved by $G^+$. Hence $P$ is a union of blocks
of $\mathcal{P}_1$, contradicting $G_P^+$ acting primitively on
$P$. Thus $|G_P:G_P^+|=2$ and so $G^+$ is transitive on
$\mathcal{P}$. Moreover, as $G_P$ is primitive on $P$ it
follows that $G_P^+$ is transitive on $P$ and so $G^+$ is transitive
on the set of $k$ simple direct factors on $N$. Hence $N$ is a minimal
normal subgroup of $G^+$ and so $G^+$ acts faithfully and quasiprimitively
on each orbit. By Lemma \ref{lem:Ne}, $N_e<N_v$ and so this action is
of type CD.
\end{proof}

Next we investigate the case where $G$ is of type PA on edges.
\begin{lemma}
\label{lem:Nvneq1}
Let $\Gamma$ be a $G$-edge-quasiprimitive connected graph such that
$G$ is of type PA on edges. Let $N$ be the unique minimal normal
subgroup of $G$. Then $N_v\neq 1$.
\end{lemma}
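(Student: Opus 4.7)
The plan is to argue by contradiction: assume $N_v = 1$ and combine Lemma~\ref{lem:Ne} with a valency count to reach an absurdity. Since $G$ is edge-quasiprimitive and $N$ is a nontrivial normal subgroup of $G$, $N$ acts transitively on $E\Gamma$, so $|E\Gamma| = |N:N_e|$; moreover, the connectedness of $\Gamma$ together with edge-transitivity of $N$ forces $N$ to have either one or two orbits on $V\Gamma$.

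First I would dispose of the bipartite case. If $N$ has two orbits on $V\Gamma$, these orbits are the bipartite halves of $\Gamma$, and $N_e \leq N$ preserves each orbit. Hence $N_e$ cannot interchange $v$ and $w$, giving $N_e = N_{vw} \leq N_v = 1$. But Lemma~\ref{lem:Ne} forces $N_e \neq N_v$, a contradiction.

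Next I would handle the case where $N$ is transitive on $V\Gamma$. Since $N_e/N_{vw}$ embeds into $\Sym(\{v,w\}) \cong C_2$ and $N_{vw} \leq N_v = 1$, we obtain $|N_e| \leq 2$, while Lemma~\ref{lem:Ne} gives $N_e \neq 1$; hence $|N_e| = 2$. An orbit count then yields $|V\Gamma| = |N|$ and $|E\Gamma| = |N|/2$, so $\Gamma$ is $1$-regular and hence a perfect matching. Connectedness of $\Gamma$ forces $|V\Gamma| = 2$, contradicting $|V\Gamma| = |T|^k \geq 60^2$, since $T$ is a nonabelian finite simple group and $k \geq 2$.

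No step presents a serious obstacle; the PA hypothesis enters only through the structural facts that $N \cong T^k$ with $T$ nonabelian simple and $k \geq 2$, and the detailed subdirect-product structure of $N_e$ in a PA action is not actually needed in the argument.
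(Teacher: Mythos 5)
Your proof is correct and follows essentially the same route as the paper's: assume $N_v=1$, observe $N_e\neq 1$, and derive a contradiction by comparing $|V\Gamma|\geq |N|$ with $|E\Gamma|\leq |N|/2$ against connectedness. The paper extracts $N_e\neq 1$ directly from the definition of type PA and compresses the count into the single line $|V\Gamma|\geq|N|>|E\Gamma|$; your case split on the number of $N$-orbits and your use of Lemma~\ref{lem:Ne} to get $N_e\neq 1$ are harmless elaborations of the same idea.
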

\begin{proof}
Since $G$ is quasiprimitive of type PA on edges we have that 
$N_e\neq 1$.  Suppose that $N_v=1$.  Then 
$|V\Gamma|\geq|N|>|E\Gamma|$, contradicting $\Gamma$ being connected.
Thus $N_v\neq 1$.  
\end{proof}
\begin{corollary}
Let $\Gamma$ be a $G$-edge-quasiprimitive connected graph such that
$G$ is of type {\rm PA} on edges. Suppose that $G$ is
vertex-quasiprimitive. Then the quasiprimitive type of $G^{V\Gamma}$
is {\rm SD, CD} or {\rm PA}.
\end{corollary}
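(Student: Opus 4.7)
The plan is to transport the unique minimal normal subgroup $N\cong T^k$ of $G$ (which witnesses that the edge action is of type PA) across to the vertex action, check that it survives as the unique minimal normal subgroup there, and then rule out each of the remaining quasiprimitive types by elementary socle-structure considerations.

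First I would let $K$ denote the kernel of the action of $G$ on $V\Gamma$. Since $G$ is vertex-quasiprimitive, every nontrivial normal subgroup of $G$ is transitive on $V\Gamma$, so in particular $N\not\leqslant K$. By minimality of $N$ in $G$, the normal subgroup $N\cap K$ of $G$ is either $1$ or $N$, and the previous sentence forces $N\cap K=1$. Consequently $NK/K\cong N\cong T^k$ embeds as a normal subgroup of $G^{V\Gamma}=G/K$.

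Next I would verify that $NK/K$ is in fact the \emph{unique} minimal normal subgroup of $G^{V\Gamma}$. By the correspondence theorem, nontrivial normal subgroups of $G/K$ correspond to normal subgroups of $G$ properly containing $K$. Since $N$ is the unique minimal normal subgroup of $G$, every nontrivial normal subgroup of $G$ contains $N$; hence every nontrivial normal subgroup of $G/K$ contains $NK/K$, so $NK/K$ is the unique minimal normal subgroup as claimed, and it is isomorphic to $T^k$ with $T$ nonabelian simple and $k\geqslant 2$.

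Finally I would eliminate the forbidden types of $G^{V\Gamma}$ one by one using the description in Section \ref{sec:prelim}: type HA requires an abelian socle, so is excluded; types HS and HC require two minimal normal subgroups, so are excluded; type AS requires a simple socle, but $k\geqslant 2$ excludes this; type TW requires the socle to act regularly, but by Lemma \ref{lem:Nvneq1} we have $N_v\neq 1$, so $N^{V\Gamma}$ is nonregular, excluding TW. The only remaining possibilities are SD, CD and PA, which is exactly the conclusion. There is no real obstacle here: the argument is a short cohomology-free bookkeeping exercise, and the only step that requires content from elsewhere is the appeal to Lemma \ref{lem:Nvneq1} to kill type TW.
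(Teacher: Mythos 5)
Your proof is correct and follows essentially the same route as the paper: invoke Lemma \ref{lem:Nvneq1} to rule out type TW, then eliminate HA, HS, HC and AS by noting that the unique minimal normal subgroup $T^k$ ($k\geq 2$, $T$ nonabelian simple) is neither abelian, nor simple, nor one of two minimal normal subgroups. The only difference is your discussion of the kernel $K$ of the vertex action, which is superfluous since $G\leqslant\Aut(\Gamma)$ already acts faithfully on $V\Gamma$.
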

\begin{proof}
Let $N$ be the unique minimal normal subgroup of $G$. By Lemma
\ref{lem:Nvneq1}, $N_v\neq 1$ and so $G$ is not of type TW on
vertices. Since $G$ has a unique minimal normal subgroup which is not
elementary abelian or simple, it follows that $G^{V\Gamma}$
is of type SD, CD or PA.
\end{proof}
\begin{corollary}
Let $\Gamma$ be a $G$-edge-quasiprimitive connected graph such that
$G$ is of type {\rm PA} on edges. Suppose that $G$ is
vertex-intransitive. Then the quasiprimitive type of $G$ on
each of its orbits is {\rm SD, CD} or {\rm PA}.
\end{corollary}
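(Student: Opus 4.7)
The plan is to mirror the argument for the vertex-quasiprimitive case treated in the preceding corollary, adapting it to the vertex-intransitive setting. Let $N$ denote the unique minimal normal subgroup of $G$; since $G$ is quasiprimitive of type PA on edges, $N\cong T^k$ with $T$ a nonabelian simple group and $k\ge 2$. My goal is to pin down the quasiprimitive type of the restriction of $G$ to each of its vertex orbits by appealing to the O'Nan--Scott description from Section~\ref{sec:prelim} together with the earlier lemmas.

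I would first invoke Lemma \ref{lem:qpfaithful}: since $G$ is edge-quasiprimitive and vertex-intransitive, $\Gamma$ is bipartite with the two $G$-orbits $\Delta_1,\Delta_2$ as its bipartite halves, and either $\Gamma$ is a star or $G$ acts faithfully and quasiprimitively on each of $\Delta_1$ and $\Delta_2$. The star case is degenerate (one orbit is a single vertex, where the notion of O'Nan--Scott type is vacuous) and would be set aside at the outset. In the remaining case, faithfulness on $\Delta_i$ gives $G^{\Delta_i}\cong G$, so $N$ descends to the unique minimal normal subgroup of $G^{\Delta_i}$, still of the form $T^k$ with $k\ge 2$. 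Among the eight types recalled in Section~\ref{sec:prelim}, the types HA, HS, HC and AS are incompatible with a nonabelian, nonsimple minimal normal subgroup that is furthermore the \emph{unique} minimal normal subgroup; hence only TW, SD, CD and PA can occur.

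The last step is to eliminate TW, which demands that $N$ act regularly on $\Delta_i$. For this I would call on Lemma \ref{lem:Nvneq1}; its statement carries no vertex-transitivity hypothesis, and its short proof goes through in the present setting as well (if $N_v=1$ then $|V\Gamma|\ge 2|N|$, whereas $|E\Gamma|\le|N|$, which is incompatible with connectivity). Thus $N_v\ne 1$ for each vertex $v$, type TW is excluded, and the quasiprimitive type of $G$ on each orbit $\Delta_i$ must be SD, CD or PA, as required. I do not anticipate a genuine obstacle here: the argument is essentially a direct parallel of the preceding corollary, and the only point requiring care is the observation that Lemma \ref{lem:Nvneq1} is valid in the vertex-intransitive case and that the star case is to be disposed of first.
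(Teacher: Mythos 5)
Your argument is correct and matches the paper's (implicit) proof: the paper leaves this corollary without proof precisely because it is the preceding corollary's argument transplanted to the intransitive setting, using Lemma \ref{lem:qpfaithful} for faithfulness and quasiprimitivity on each orbit and Lemma \ref{lem:Nvneq1} (whose proof indeed uses no vertex-transitivity) to rule out type TW. Your side remarks --- that the star case must be set aside and that the unique minimal normal subgroup $T^k$ with $k\geq 2$, $T$ nonabelian simple, immediately excludes HA, HS, HC and AS --- are exactly the points the paper relies on.
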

\begin{corollary}
Let $\Gamma$ be a $G$-edge-quasiprimitive connected graph such that
$G$ is of type {\rm PA} on edges. Suppose that $G$ is
vertex-biquasiprimitive and $G^+$ is quasiprimitive on each
orbit. Then the quasiprimitive type of $G^+$ on each of its orbits is
{\rm HS, HC, SD, CD} or {\rm PA}. 
\end{corollary}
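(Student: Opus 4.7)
The plan is to rule out the three remaining quasiprimitive types HA, AS, and TW for the action of $G^+$ on each of its orbits, leaving exactly the five listed. Let $N\cong T^k$ be the unique minimal normal subgroup of $G$, with $T$ non-abelian simple and $k\geq 2$ (as $G$ is of type PA on edges). Because $N$ is perfect it has no subgroup of index two, so $N\leqslant G^+$; moreover $C_G(N)=1$, since its intersection with $N$ equals $Z(N)=1$ and any non-trivial normal subgroup of $G$ must contain the unique minimal normal subgroup $N$. In particular $C_{G^+}(N)=1$.

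The key observation I would establish is that every minimal normal subgroup $M$ of $G^+$ is contained in $N$. If $M\cap N=1$ then $[M,N]\leqslant M\cap N=1$, forcing $M\leqslant C_{G^+}(N)=1$, a contradiction; hence $M\cap N\neq 1$ and minimality yields $M\leqslant N$. Since the normal subgroups of $N=T_1\times\cdots\times T_k$ are precisely the products $\prod_{i\in I}T_i$, we must have $M=\prod_{i\in I}T_i$ for some single $G^+$-orbit $I$ on $\{1,\ldots,k\}$. As a product of non-abelian simples, $M$ is non-abelian, which immediately rules out type HA. Since $|G:G^+|=2$ and $G$ is transitive on $\{1,\ldots,k\}$, the group $G^+$ has either one or two orbits on the simple direct factors of $N$, and correspondingly one or two minimal normal subgroups.

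In the one-orbit case $G^+$ has $N$ itself as its unique minimal normal subgroup. Type AS is impossible because $N=T^k$ with $k\geq 2$ is not simple, and type TW would require $N$ to act regularly on the orbit so that $N_v=1$, contradicting Lemma \ref{lem:Nvneq1}. Hence the type is SD, CD, or PA. In the two-orbit case $G^+$ has two minimal normal subgroups, and HS and HC are the only quasiprimitive types admitting two minimal normal subgroups, so the type is one of these. The main obstacle in executing this plan is a technical one: ensuring the abstract minimal normal subgroups of $G^+$ correspond to those of the permutation group it induces on an orbit, which requires faithfulness on that orbit. This is supplied by Lemma \ref{lem:faithful} except in the complete bipartite case, which can be dealt with separately using Theorem \ref{thm:compbipartite}.
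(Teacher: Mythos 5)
Your argument is correct and is essentially the proof the paper leaves implicit: this corollary is stated without proof, and the intended argument is exactly yours --- $N\leqslant G^+$ because $T^k$ has no index-two subgroup, every minimal normal subgroup of $G^+$ lies in $N$ and is the product of the simple direct factors over a single $G^+$-orbit, so $G^+$ has one or two minimal normal subgroups, and types HA, AS and TW are then excluded by nonabelianness, by $k\geqslant 2$, and by Lemma \ref{lem:Nvneq1} respectively, leaving SD, CD, PA in the one-orbit case and HS, HC in the two-orbit case. The only inaccuracy is your closing remark about faithfulness: the complete bipartite case is not something that can be ``dealt with'' via Theorem \ref{thm:compbipartite} --- if $G^+$ is unfaithful on an orbit the conclusion can genuinely fail (e.g.\ $G=S_5\Wr S_2$ acting on $K_{5,5}$ is edge-primitive of type PA while $G^+$ induces the almost simple primitive group $S_5$ on each bipartite half) --- rather, the paper simply separates complete bipartite graphs out as case (1) of Theorem \ref{thm:edgeqp}, so that by Lemma \ref{lem:faithful} faithfulness of $G^+$ on each orbit is a standing hypothesis wherever this corollary is invoked.
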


Collecting together our results  we have the following two
theorems. We split the statements into the vertex-transitive and
vertex-intransitive cases.

\begin{theorem}
\label{thm:edgeqp}
Let $\Gamma$ be a $G$-edge-quasiprimitive,
$G$-vertex-transitive  connected graph of valency
at least three such that $G^{E\Gamma}$ is of type $X$. Then one of the
following holds.
\begin{enumerate}
\item $\Gamma$ is a complete bipartite graph.
\item $X\in\{\mathrm{SD, CD}\}$ and $\Gamma$ can be obtained from
  Construction \ref{con:SDCDtrans}.  
\item $X=\mathrm{PA}$ and $G$ is quasiprimitive on $V\Gamma$ of
  type {\rm SD, CD} or {\rm PA}. 
\item $X=\mathrm{PA}$ and $\Gamma$ is bipartite, such that $G^+$
  is faithful and quasiprimitive on each of its orbits of type
  $Y\in\{\mathrm{HS, HC, SD, CD, PA}\}$.  
\item $X=\mathrm{PA}$, $\Gamma$ is bipartite, and
  $G^+$ is not quasiprimitive on either orbit.
\item $X=\mathrm{AS}$ and either $G^{V\Gamma}$ is quasiprimitive
  of type {\rm AS} or $\Gamma$ is bipartite and $G^+$ is faithful and
  quasiprimitive of type {\rm AS} on each of its orbits. 
\item $X={\mathrm TW}$.
\end{enumerate}
Moreover, examples occur in all cases. 
\end{theorem}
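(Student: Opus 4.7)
The plan is to argue by cases on the O'Nan--Scott type $X$ of $G^{E\Gamma}$, assembling the preceding lemmas and corollaries into a single statement. Since $\Gamma$ has valency at least three, the degenerate cycle case from Lemma \ref{lem:arctrans} cannot occur, which lets me assume the edge action is nontrivial in each subcase. By Lemma \ref{lem:qpbiqp}, $G^{V\Gamma}$ is quasiprimitive or biquasiprimitive, and in the biquasiprimitive case Lemma \ref{lem:faithful} gives that either $\Gamma = K_{n,n}$ (case (1)) or $G^+$ acts faithfully on each of its two orbits. So throughout I may assume $\Gamma$ is not complete bipartite and reduce to checking what the vertex action can look like for each edge type.

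Next I would dispatch the ``easy'' edge types. For $X\in\{\mathrm{HA},\mathrm{HS},\mathrm{HC}\}$ the lemmas immediately preceding this theorem force $\Gamma = K_{n,n}$, giving case (1). For $X=\mathrm{AS}$, the corresponding almost-simple lemma together with Lemmas \ref{lem:qpfaithful} and \ref{lem:qpbiqp} yields case (6). For $X=\mathrm{SD}$ or $\mathrm{CD}$, Proposition \ref{prn:SDCD} applies: once $\Gamma\neq K_{n,n}$, it gives the bipartite structure and shows $\Gamma$ arises from Construction \ref{con:SDCDtrans}, which is case (2). For $X=\mathrm{TW}$ I simply record case (7); no further restriction is claimed in the statement, so nothing needs to be proved here beyond exhibiting an example. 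The remaining work is the $X=\mathrm{PA}$ case, where I apply Lemma \ref{lem:Nvneq1} and the three corollaries immediately following it: in the vertex-quasiprimitive subcase the type of $G^{V\Gamma}$ must be SD, CD, or PA (giving (3)); in the biquasiprimitive subcase, if $G^+$ is quasiprimitive on each orbit the listed types HS, HC, SD, CD, PA occur (giving (4)), and otherwise case (5) records precisely the remaining possibility.

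For the ``examples exist in all cases'' clause, I would just point to what has already been built in the paper: Theorem \ref{thm:compbipartite} for (1); Examples \ref{eg:SD} and \ref{eg:CDCD} for (2); Construction \ref{con:PAPA} for (3); Construction \ref{con:PAbiqp} with the appropriate almost-simple input (e.g.\ the one flowing from Example \ref{eg:M12}) for (4), where the different possible types of $G^+$ on an orbit are produced by varying the seed graph (in particular Examples \ref{eg:SDHC} and \ref{eg:CDHC} show the HC possibility via SD- and CD-style setups); Remark \ref{rem:notqp} and Example \ref{eg:CDG+notqp} for (5); and the standard edge-primitive examples such as Example \ref{eg:pthyp} for (6). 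For case (7), a twisted-wreath example is obtained by invoking Lemma \ref{lem:TWmax} to produce a primitive TW group acting faithfully on a suitable self-paired orbital.

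The main obstacle is the bookkeeping in the PA edge case, specifically ensuring that cases (3), (4), (5) really cover every possibility and are mutually exhaustive: one must carefully combine Lemmas \ref{lem:qpfaithful}, \ref{lem:qpbiqp}, \ref{lem:faithful} with Lemma \ref{lem:Nvneq1} to rule out TW as a vertex type (since $N_v\neq 1$) and to justify that when $G^+$ is quasiprimitive on an orbit its type cannot be HA, TW or AS (the first two by comparing socles, the last by the isomorphism type of $N\cong T^k$ with $k\geq 2$). Verifying the TW exclusion and the HA/AS exclusions is routine but the one place where the argument is not a direct quotation of an earlier statement.
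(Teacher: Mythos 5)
Your case analysis follows exactly the route the paper takes: the theorem is assembled by ``collecting together'' the lemmas of Section \ref{sec:proof} (the HA, HS/HC and AS lemmas, Proposition \ref{prn:SDCD}, and Lemma \ref{lem:Nvneq1} with its corollaries), and your bookkeeping for the PA edge type --- including the need to exclude HA, TW and AS as possible types for $G^{V\Gamma}$ or $G^+$ on an orbit by comparing socles and using $N_v\neq 1$ --- is correct and matches what the paper leaves implicit in its unproved corollaries.

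There is, however, a genuine gap in your treatment of the ``examples occur in all cases'' clause, for case (7). You propose to ``invoke Lemma \ref{lem:TWmax} to produce a primitive TW group acting faithfully on a suitable self-paired orbital.'' This cannot work: if $G$ is vertex-primitive of type TW then $N=\soc(G)$ is regular on $V\Gamma$, so $|E\Gamma|=|V\Gamma|\cdot d/2>|N|$ for valency $d\geq 3$, and hence $N$ cannot be regular on $E\Gamma$ --- but regularity of $N$ on edges is exactly what type TW on edges requires. The example the theorem needs is Construction \ref{con:TW} in Section \ref{sec:qpeg}, where $G_e=\langle g\rangle\times S_k$ meets $N=T^k$ trivially and $\Gamma$ is bipartite with $G^+$ primitive of type PA on each half; some such vertex-imprimitive arrangement is forced by the counting above. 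Two smaller misattributions: Examples \ref{eg:SDHC} and \ref{eg:CDHC} have $X\in\{\mathrm{SD,CD}\}$ and so illustrate case (2), not the $Y=\mathrm{HC}$ subcase of case (4) --- the case (4) examples with $Y\in\{\mathrm{HS,HC,SD,CD}\}$ are Examples \ref{eg:PAHS}--\ref{eg:PAGplusCD}; and Example \ref{eg:CDG+notqp} has $X=\mathrm{CD}$, so it does not witness case (5), for which Remark \ref{rem:notqp} alone suffices.
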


Examples \ref{eg:SD} and \ref{eg:CDCD} provide edge-primitive
examples for case (2), Construction \ref{con:PAPA} gives examples for
case (3) where $G$ is primitive of type PA on vertices, Construction 
\ref{con:PAbiqp} gives examples where $G^+$ is primitive of type PA on
each orbit and Section \ref{sec:eg} gives many example for case
(6). An edge-primitive example for case (5) is given by Remark
\ref{rem:notqp}. Examples of edge-quasiprimitive but not
edge-primitive are given in Section \ref{sec:qpeg}. 
 
If $G$ is edge-primitive we can sometimes deduce more information. For
example, we can eliminate  $X=\mathrm{TW}$.

\begin{proposition}
\label{prn:TW}
Let $\Gamma$ be a $G$-edge-primitive graph such that
$G$ is of type $\mathrm{TW}$ on edges. Then $\Gamma$ is a complete bipartite
graph.
\end{proposition}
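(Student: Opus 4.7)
The plan is to exploit the fact that in type TW the unique minimal normal subgroup $N = \soc(G) \cong T^k$ (with $T$ nonabelian simple, $k \geq 2$) acts regularly on $E\Gamma$, so $|N|=|E\Gamma|$ and $G = N \rtimes G_e$. If $\Gamma$ is a star then $\Gamma = K_{1,n}$, which is complete bipartite. Otherwise Lemma \ref{lem:transitive} gives that $G$ is vertex-transitive and Lemma \ref{lem:arctrans} gives that $G$ is arc-transitive (the cyclic-cycle alternative is ruled out since $N$ is insoluble). Pick an edge $e=\{v,w\}$ and $g \in G_e$ interchanging $v$ and $w$.

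The key observation is that the edge-primitivity of $G$, equivalently the maximality of $G_e$ in $G = N \rtimes G_e$, forces $G_e$ to normalise no proper nontrivial subgroup $M$ of $N$: otherwise $MG_e$ would be a subgroup strictly between $G_e$ and $G$. I would apply this to the two subgroups $N_v \cap N_w$ and $\langle N_v, N_w\rangle$, both of which are $G_e$-invariant because $G_{vw}$ normalises $N_v$ and $N_w$ individually while $g$ swaps them. Since $N_v \cap N_w \leq N_e = 1$, the intersection is trivial, and hence $\langle N_v, N_w\rangle \in \{1, N\}$. If $\langle N_v, N_w\rangle = 1$ then $N_v = N_w = 1$, so $N$ is semi-regular on $V\Gamma$; comparing $|N|=|E\Gamma|$ with the two possibilities for the number of $N$-orbits on $V\Gamma$ yields either a cycle of length $|N|$ (impossible as $N$ is insoluble) or a matching which by connectedness is $K_2$, a complete bipartite graph.

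The remaining case, $\langle N_v, N_w\rangle = N$ with $N_v \neq 1$, is the main obstacle. Here I would first eliminate the possibility that $N$ is vertex-transitive: then $G^{V\Gamma}$ would be quasiprimitive of type SD, CD or PA (TW being ruled out by $N_v \neq 1$), and in each case the structural description of $N_v$ as a product of strips in $T^k$, together with $N_v \cap N_v^g = 1$ and $\langle N_v, N_v^g\rangle = N$, conflicts with the TW-primitivity constraint on $G_e$ given by Lemma \ref{lem:TWmax}. Once $N$ has two orbits on $V\Gamma$, the graph is bipartite with $|\Delta_i|=|N|/|N_v|$ and valency $d = |N_v|$; since $|N_v|^2 = |N_v N_w| \leq |N| = |\langle N_v, N_w\rangle|$, combining the equality $\langle N_v, N_w\rangle = N$ with the fact that conjugation by $g$ swaps $N_v N_w$ and $N_w N_v$ inside $N$ forces $N_v N_w = N$, hence $|N_v|^2 = |N|$ and $N_v$ is transitive on $\Delta_2$, so $\Gamma = K_{|\Delta_1|,|\Delta_2|}$. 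The delicate step is this last one: reconciling the diagonal/subdirect structure of $N_v$ in $T^k$ with the TW-primitivity of $G_e$ to force $N_v N_w$ to fill out $N$ as a set.
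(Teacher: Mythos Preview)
Your opening moves are sound: the maximality of $G_e$ in $N\rtimes G_e$ does force $G_e$ to normalise no proper nontrivial subgroup of $N$, and applying this to $N_v\cap N_w$ and $\langle N_v,N_w\rangle$ cleanly disposes of the case $N_v=1$. This is in fact a tidier version of what the paper does in its first sub-case.

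The genuine gap is the case $N_v\neq 1$, $\langle N_v,N_w\rangle=N$. Your two proposed steps there do not go through as stated. First, the elimination of vertex-transitivity is only gestured at: knowing $N_v\cap N_v^g=1$ and $\langle N_v,N_v^g\rangle=N$ does not by itself contradict the strip structure of $N_v$ in the SD, CD or PA cases, and you give no mechanism by which Lemma~\ref{lem:TWmax} converts this into a contradiction. Second, and more seriously, the implication $\langle N_v,N_w\rangle=N\Rightarrow N_vN_w=N$ is false in general: $N_vN_w$ need not be a subgroup, so $|N_v|^2=|N_vN_w|$ may be strictly less than $|N|$ even when $N_v,N_w$ generate $N$. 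The swap $g(N_vN_w)g^{-1}=N_wN_v$ gives you nothing extra here.

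What the paper actually uses is the \emph{full} content of Lemma~\ref{lem:TWmax}, not merely the absence of $G_e$-invariant subgroups: the condition $\Inn(T)\leqslant\varphi((G_e)_1)$ forces the projection of $N_v$ onto each simple direct factor of $N$ to be $1$ or $T$. The paper then splits on whether $(G_e)_1\leqslant G_{vw}$. If not, $G_{vw}$ is still transitive on the factors and a second application of Lemma~\ref{lem:TWmax} to $G_{vw}$ shows it too is maximal in $N\rtimes G_{vw}$, whence $N_v=1$. If $(G_e)_1\leqslant G_{vw}$, then $G_{vw}$ has two orbits of size $k/2$ on the factors, and the projection dichotomy forces $N_v$ either to lie inside one half of $N$ (giving the unfaithful bipartite action and hence $K_{n,n}$) or to be a product of strips each of support size exactly $2$, straddling the two halves; a count then gives $|N_v|=|T|^{k/2}$ and hence $|N_v|^2=|N|$, which is precisely the equality you were unable to derive. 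Without this projection information your argument cannot be completed.
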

\begin{proof}
Let $\Gamma$ be a $G$-edge-primitive graph such that $G$ is of type TW
on edges. Let $N$ be the unique minimal normal subgroup of $G$. Then
$N=T_1\times\cdots\times T_k$ with each $T_i\cong T$ for some finite
nonabelian simple group $T$ and $G=N\rtimes G_e$. Moreover, $G_e$ acts
transitively by conjugation on the set of $k$ simple direct factors of
$N$. Let $(G_e)_1$ be the normaliser in $G_e$ of $T_1$ and
$\varphi:(G_e)_1\rightarrow \Aut(T)$ be the homomorphism induced by
the action of $(G_e)_1$ on $T_1$ by conjugation. By Lemma
\ref{lem:TWmax}, since $G_e$ is maximal in $G$ we have that
$\Inn(T)\leqslant\varphi((G_e)_1)$ and $\varphi$ extends to no
overgroup of $(G_e)_1$ in $G_e$. Since $G$ is arc-transitive it
follows that $G_{vw}$ is an index two subgroup of $G_e$. There are two
cases to consider: $(G_e)_1\cap G_{vw}$ is an index two subgroup of
$(G_e)_1$, or $(G_e)_1\leqslant G_{vw}$. 

Suppose that $(G_e)_1\cap G_{vw}$ is an index two subgroup of
$(G_e)_1$. Then $G_{vw}$ acts transitively on the set of $k$ simple direct
factors of $N$. Since $\Inn(T)$ does not have an index two subgroup,
it follows that $\Inn(T)\leqslant \varphi(G_{vw})$. Suppose that there
exists $R$ with $(G_e)_1\cap G_{vw}\leqslant R\leqslant G_{vw}$ such
that $\varphi$ extends to $R$. Then $\varphi$ would extend to $\la
(G_e)_1,R\ra\leqslant G_e$. 
Since $\varphi$ does not extend to any overgroup of $(G_e)_1$ in $G_e$ it
follows that $R\leqslant (G_e)_1$ and so $R=G_{vw}\cap (G_e)_1$. Thus
by Lemma \ref{lem:TWmax}, $(G_e)_1\cap G_{vw}$ is maximal in 
$N\rtimes ((G_e)_1\cap G_{vw})$. Since $G_{vw}$ normalises $N_v$ and $N_w$,
it follows that $N_v=N_w=1$. Thus $|V\Gamma|=|N|$ or $2|N|$. However,
$|E\Gamma|=|N|$ and so $|V\Gamma|=|N|$ and $\Gamma$ is a cycle. This
contradicts $G$ being insoluble and so $(G_e)_1\leqslant G_{vw}$.

Since $(G_e)_1\leqslant G_{vw}$ it follows that $G_{vw}$ has two equal
sized orbits on the set of $k$ simple direct factors of $N$. Without
loss of generality we may suppose that these are
$\{T_1,\ldots,T_{k/2}\}$ and $\{T_{k/2+1},\ldots,T_k\}$ and note that
they are interchanged by elements of $G_e$ not in $G_{vw}$. Moreover,
$(G_e)_1$ normalises $N_v$. Since $\varphi((G_e)_1)$ contains
$\Inn(T)$ it follows that the projection of $N_v$ onto the first
simple direct factor of $N$ is either trivial or equal to $T$. Thus
$N_v$ is a subdirect product of either $T_{k/2+1}\times \cdots\times
T_k$ or $N$. If $N_v\leq T_{k/2+1}\times \cdots\times T_k$ then
$N_w\leq T_1\times\cdots \times T_{k/2}$. Moreover, $G_e$ normalises
$\la N_v,N_w\ra$ and so by the maximality of $G_e$ in $G$ we have 
$\la N_v,N_v\ra=N$.  Thus $N_v=T_{k/2+1}\times \cdots\times T_k$, and so
$N$ has two orbits on vertices and is unfaithful on each. Hence by Lemma
\ref{lem:faithful}, $\Gamma$ is a complete bipartite graph. Thus we
are left to consider the case where $N_v$ is a subdirect product of $N$. 
Thus there exists a partition $\mathcal{P}$ of $\{1,\ldots,k\}$ such that
$N_v=\prod_{I\in\mathcal{P}}T_I$ where $T_I$ is a diagonal subgroup of
$\prod_{i\in I}T_i$. Since $(G_e)_1\leqslant G_{vw}\leqslant G_e$ it
follows from Lemma \ref{lem:TWmax} that $G_{vw}$ is a maximal subgroup
of $(T_1\times \cdots\times T_{k/2})\rtimes G_{vw}$. Hence 
$N_v\cap (T_1\times \cdots\times T_{k/2})=1$. Similarly, $G_{vw}$ is
maximal in $(T_{k/2+1}\times \cdots\times T_k)\rtimes G_{vw}$ and so
$N_v \cap (T_{k/2+1}\times \cdots\times T_k)=1$. It follows that each
$I\in\mathcal{P}$ is split equally between $\{1,\ldots,k/2\}$ and
$\{k/2+1,\ldots,k\}$. However, since $G_{vw}$ normalises $N_v$ and
$M=T_1\times \cdots\times T_{k/2}$ it follows that $G_{vw}$ normalises
the projection of $N_v$ onto $M$. Thus $|I|=2$, as $G_{vw}$ normalises
no proper nontrivial subgroup of $M$. Hence $|N_v|=|T|^{k/2}$ and
$|V\Gamma|=|T|^{k/2}$ or $2|T|^{k/2}$. The first case is not possible
as $|V\Gamma|^2=|E\Gamma|$, a contradiction. Hence we have the
second. This implies that $\Gamma$ is complete bipartite and we are
done. 
\end{proof}

We can also deduce more information when $X=\mathrm{PA}$.  

\begin{lemma}
\label{lem:PA}
Let $\Gamma$ be a $G$-edge-primitive graph such that $G$ is of type
$\mathrm{PA}$ 
on edges and $\Gamma$ is not complete bipartite. Then one of the
following holds: 
\begin{enumerate} 
\item $G$ is quasiprimitive on vertices of type $\mathrm{PA}$;
\item  $G$ is biquasiprimitive and $G^+$ is quasiprimitive of type
  $\mathrm{PA}$
  on each bipartite half; 
\item $G$ is biquasiprimitive and $G^+$ is not quasiprimitive on
  either bipartite half.
\end{enumerate}
\end{lemma}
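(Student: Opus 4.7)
The plan is to combine the structural reductions already in hand with the edge-primitivity hypothesis. By Lemma \ref{lem:transitive}, since $\Gamma$ is not a star (in particular not complete bipartite), $G$ is vertex-transitive, and Lemma \ref{lem:qpbiqp} then forces $G$ to be either vertex-quasiprimitive (heading for (1)) or vertex-biquasiprimitive. In the biquasiprimitive case, Lemma \ref{lem:faithful} gives that $G^+$ is faithful on each of its two orbits, and since any element of $G\setminus G^+$ interchanges the orbits while conjugating $G^+$ to itself, the two $G^+$-actions are permutation isomorphic; hence $G^+$ is quasiprimitive on either both orbits or on neither. The ``neither'' alternative is exactly conclusion (3), so it remains to show that in the vertex-quasiprimitive case and in the biquasiprimitive case with $G^+$ quasiprimitive on each orbit, the action is of type PA.

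The three corollaries following Lemma \ref{lem:Nvneq1} restrict the possible quasiprimitive types to $\{\mathrm{SD},\mathrm{CD},\mathrm{PA}\}$ in the vertex-quasiprimitive setting, and to $\{\mathrm{HS},\mathrm{HC},\mathrm{SD},\mathrm{CD},\mathrm{PA}\}$ on each $G^+$-orbit in the biquasiprimitive setting. The remaining task is to rule out all the non-PA possibilities using the fact that $G_e$ is maximal in $G$.

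Write $N\cong T^k$ for the unique minimal normal subgroup of $G$ and let $e=\{v,w\}$ be an edge. In each non-PA type, $N_v$ (or the stabiliser of $v$ in $\soc(G^+)$ in the HS/HC setting) is a subdirect product of $N$ whose strips all project onto $T$ on their support, whereas the edge-PA hypothesis yields $N_e$ as a subdirect product of $R^k$ with $R<T$. My natural candidate for a forbidden intermediate subgroup is the normaliser $N_G(N_e)$: it always contains $G_e$, and it is proper in $G$ because $N_e$ is neither trivial (since $R\neq 1$ for PA) nor all of $N$ (by edge-transitivity and $|E\Gamma|>1$), so minimality of $N$ prevents $N_e$ from being $G$-normal. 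To strictly enlarge $G_e$ inside $N_G(N_e)$ it suffices to find an element of $N\setminus N_e$ normalising $N_e$, since $N\cap G_e=N_e$; the mismatch between the full $T$-projections of $N_v$ and the smaller $R$-projections of $N_e$, tracked with the aid of Lemma \ref{lem:intersectstrips}, provides such an element in the generic case. For the HS and HC subcases I first observe that $\soc(G^+)$ is characteristic in $G^+\norml G$ and thus normal in $G$; minimality of $N$ combined with regularity of the minimal normal subgroups of $G^+$ on each orbit forces $N=\soc(G^+)$, and the $N_v$-structure on each orbit then becomes a (twisted) diagonal or product of full strips, reducing the analysis to the SD/CD cases.

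The main obstacle will be the ``rigid'' subcase in which $R$ is self-normalising in $T$ with trivial centraliser, for then $N_N(N_e)=N_e$ and the plain normaliser argument breaks down. In this situation I expect the contradiction to come instead from a finer analysis, for example by exhibiting $G_e$-invariant blocks on the edges incident at $v$, or by assembling a proper overgroup of $G_e$ using the $G$-conjugates of $N_v$ and $N_w$ together with the arc-transitivity coming from Lemma \ref{lem:arctrans}.
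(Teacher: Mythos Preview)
Your normaliser strategy cannot work: in this setting $N_G(N_e)=G_e$ always, so the ``generic case'' you hope to handle does not exist and your ``rigid subcase'' is the entire problem. Indeed, primitivity of type PA on edges means $N_e=R^k$ with $R=H\cap T$ for some core-free maximal subgroup $H$ of an almost simple group $B$ with socle $T$. Since $1\neq R\trianglelefteq H$ we have $H\leqslant N_B(R)<B$, and maximality of $H$ forces $H=N_B(R)$, whence $N_T(R)=H\cap T=R$. Thus $N_N(N_e)=N_T(R)^k=R^k=N_e$, and since $G=NG_e$ it follows that $N_G(N_e)=G_e$. There is no element of $N\setminus N_e$ normalising $N_e$, and no proper overgroup of $G_e$ arises this way, in any case whatsoever. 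The vague ``finer analysis'' you defer to is therefore the whole argument, and you have not supplied it.

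The paper's route is entirely different and avoids normalisers of $N_e$. It first uses edge-primitivity to show $|R|>2$: if $R=\langle z\rangle$ had order $2$ then $H\leqslant C_B(z)$, so $H=C_B(z)$ by maximality and $R=C_T(z)$, contradicting $4\mid |T|$. Since $N_{vw}$ has index at most $2$ in $N_e=R^k$ and $|R|>2$, for each coordinate $i$ the subgroup $N_v\cap T_i\geqslant N_{vw}\cap T_i$ is nontrivial. Now if $\pi_i(N_v)=T$ for some $i$, then $N_v\cap T_i$ is a nontrivial subgroup of $T_i$ normalised (through conjugation by $N_v$) by $\pi_i(N_v)=T$, forcing $T_i\leqslant N_v$; this contradicts faithfulness of $N$ on its vertex orbits (Lemma~\ref{lem:faithful}). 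Hence $\pi_i(N_v)<T$ for every $i$. This single conclusion does all the work: it rules out SD and CD for a vertex-quasiprimitive $G$; in the biquasiprimitive case, if $G^+$ is transitive on the simple factors then $N$ is minimal normal in $G^+$ and one reads off type PA, while if $G^+$ has two orbits on the factors then $N$ splits as a product of two minimal normal subgroups of $G^+$, each meeting $N_v$ nontrivially, so neither can be regular and $G^+$ is not quasiprimitive on either orbit.
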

\begin{proof}
Let $N$ be the unique minimal normal subgroup of $G$. Then $N=T^k$ for
some finite nonabelian simple group $T$ and $k \geq 2$. Also given an
edge $e=\{v,w\}$ we have $N_e=R^k$ for some proper nontrivial subgroup $R$ of
$T$.  Since $G^{E\Gamma}$ is primitive, there exists an almost simple
group $A$ with socle $T$ and maximal subgroup $H$ such that $H\cap
T=R$. Suppose that $|R|=2$. Then $H=C_A(z)$ and $R=C_T(z)$, where $z$
is the involution which generates $R$. However, $4$ divides $|T|$ and
so either $z$ is contained in a cyclic group of order 4 or an
elementary abelian group of order 4, a contradiction. Thus $|R|>2$. It
follows that $N_e$ does not have an index 2 subgroup and so
$N_{vw}=N_e$. Hence $R^k\leqslant N_v$. Thus for each $i$ such that
$\pi_i(N_v)=T$, we have that $N_v$ contains the $i^{\mathrm{th}}$
factor of $N$. Since $\Gamma$ is not complete bipartite, Lemma
\ref{lem:faithful} implies that $N$ is faithful on each of its orbits on
$V\Gamma$, and so $N$ cannot contain any of its simple direct
factors. Thus $\pi_i(N_v)\neq T$ for all $i$.
Hence if $G$ is quasiprimitive on $V\Gamma$, this implies that $G$ is
of type PA on vertices and we have case (1). If $G$ is
biquasiprimitive on vertices and $G^+$ is transitive on the set of
simple direct factors of $N$ then we have that $G^+$ is quasiprimitive
of type PA on each of its orbits and we have case (2). If $G^+$ has
two orbits on the set of simple direct factors of $N$ then $G^+$ has
two minimal normal subgroups contained in $N$. Since $N_v$ does not
project onto $T$ in any coordinate, it follows that $G^+$ is not
quasiprimitive on either orbit and so case (3) holds.
\end{proof}

Note that when $G$ is biprimitive on vertices, $G^+$ is primitive on
each bipartite half. Hence Lemma \ref{lem:PA} combined with
Theorem \ref{thm:edgeqp}, Corollaries \ref{cor:SDeprim} and
\ref{cor:CDeprim}, and Proposition \ref{prn:TW} yields Theorem
\ref{thm:eprim}.   

We complete this section by reducing the study of edge-primitive graphs
of type PA to the 
study of edge-primitive graphs of type AS. Before doing so we need to
establish some notation. Let $T$ be a nonabelian simple group and
let $G$ be a subgroup of $\Aut(T)\Wr S_k$ for some $k\geq 2$, which
contains $N=T_1\times\cdots\times T_k$ where each $T_i\cong T$, and
such that $G$ induces a transitive subgroup of $S_k$ on the set of $k$
simple direct factors of $N$. Let $G_1$ be the normaliser in $G$ of
$T_1$. Then $G_1=G\cap (\Aut(T)\times (\Aut(T)\Wr S_{k-1}))$ and there
exists a projection $\pi_1:G_1\rightarrow \Aut(T)$.  Let
$B=\pi_1(G_1)$. By \cite[(2.2)]{kovacs}, conjugating by a suitable element of
$\Aut(T)\Wr S_k$ we may have chosen $G$ such that $G\leqslant B\Wr
S_k$.  We call $B$ the \emph{group induced by $G$}.   

\begin{proposition}
\label{prn:PAtoAS}
Suppose that $\Gamma$ is a $G$-edge-primitive graph such that
$G^{E\Gamma}$ is of type PA, and let $e=\{v,w\}$ be an edge. Let
$N=\soc(G)\cong T^k$ for some finite nonabelian simple group $T$ and
$k$ a positive integer at least two. Suppose that $G$ induces the
primitive almost simple group $B$ with socle $T$, and that $G_e$ and
$G_v$ induce the subgroups $E$ and $H$ of $B$ respectively. Then there
exists a $B$-edge-primitive graph with edge-stabiliser $E$ and
vertex-stabiliser $H$.
\end{proposition}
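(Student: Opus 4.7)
My plan is to apply Proposition \ref{prn:general} to the triple $(B,E,H)$ inside the almost simple group $B$, thereby producing a $B$-edge-primitive graph $\Sigma=\Cos(B,H,HgH)$ with vertex stabiliser $H$ and edge stabiliser $E$. For this I must verify: (i) $E$ is maximal in $B$; (ii) there is an index-two subgroup $A$ of $E$ contained in $H$; and (iii) $H$ is corefree in $B$ with $H\neq E$. Throughout I use the obvious candidates, namely $E=\pi_1((G_e)_1)$, $H=\pi_1((G_v)_1)$, and $A=\pi_1((G_{vw})_1)$, where $(G_e)_1=G_e\cap G_1$ and similarly for the other subscripts.

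Conditions (i) and (iii) are comparatively direct. For (i), the O'Nan--Scott description of PA-type primitive actions embeds $G\leqslant B\Wr S_k$ so that the edge action is the product action on $\Delta^k$, with $G_e=G\cap(E\Wr S_k)$ and $E=B_\delta$ the point stabiliser; PA primitivity forces $E$ to be maximal in $B$. For (iii), corefreeness of $H$ in $B$ transports from corefreeness of $G_v$ in $G$ (which holds since $G\leqslant\Aut(\Gamma)$ acts faithfully on $V\Gamma$) using the fact that $\ker\pi_1$ is contained in the centraliser of $T_1$ and so any nontrivial element of the core of $H$ in $B$ lifts to a nontrivial normal subgroup of $G$ fixing $v$. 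The inequality $H\neq E$ follows from $G_v\neq G_e$ in the arc-transitive case given by Lemma \ref{lem:arctrans} together with the nontriviality of $\Gamma$.

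The heart of the argument is condition (ii). By Lemma \ref{lem:arctrans}, $G_{vw}$ is normal of index two in $G_e$, so $[E:A]$ divides $[(G_e)_1:(G_{vw})_1]$, which in turn divides $2$. To secure $[E:A]=2$, I need to exhibit an arc-swapping element $x\in G_e\setminus G_{vw}$ lying in $G_1$, so that $\pi_1(x)\in E\setminus A$. First I shall note that $N_e=R^k$ with $|R|>2$ (as in the proof of Lemma \ref{lem:PA}) admits no index-two subgroup, whence $N_e=N_{vw}$ and the quotient $G_e/G_{vw}\cong C_2$ descends to a quotient of $G_e/N_e\leqslant(E/R)\Wr S_k$. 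A case analysis according to whether this $C_2$ is realised in the ``diagonal'' factor $E/R$ or in the ``permutation'' factor $S_k$ will show that $G_{vw}$ acts transitively on $\{T_1,\ldots,T_k\}$; with transitivity in hand, correcting a given arc-swap by an element of $G_{vw}$ that sends $x(T_1)$ back to $T_1$ produces the required $x\in(G_e)_1$. Proposition \ref{prn:general} then delivers the desired $B$-edge-primitive graph $\Sigma=\Cos(B,H,H\pi_1(x)H)$.

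The main obstacle is precisely the case analysis in (ii): ruling out the scenario in which $G_{vw}$ has two orbits on the simple direct factors of $N$ which are swapped by $G_e\setminus G_{vw}$. In that scenario $(G_e)_1=(G_{vw})_1$ and hence $A=E$, so Proposition \ref{prn:general} cannot be applied directly. Eliminating this possibility is where the edge-primitivity (rather than mere edge-quasiprimitivity) of $G$ is used essentially, via the maximality of $G_e$ in $G$ forcing any $G_e$-invariant bipartition of the simple direct factors to be $G$-invariant, which can be shown to contradict the structure of a nontrivial $G$-edge-primitive graph.
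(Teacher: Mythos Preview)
Your approach to conditions (i) and (iii) is essentially the paper's, but your handling of (ii) is more complicated than necessary and the resolution of your ``main obstacle'' is not correct as stated. You argue that maximality of $G_e$ in $G$ forces any $G_e$-invariant bipartition of the simple direct factors to be $G$-invariant; but maximality only tells you that the stabiliser in $G$ of such a bipartition is either $G_e$ or $G$, and there is no reason it cannot be $G_e$. Even if it were $G$, this would merely say $G$ has a system of imprimitivity with two blocks on the factors, which is perfectly compatible with $G^{E\Gamma}$ being primitive of type PA. So the case $(G_e)_1=(G_{vw})_1$ is not eliminated by your argument.

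The paper disposes of this case by a much shorter route, using maximality of $E$ in $B$ rather than maximality of $G_e$ in $G$. Observe that $A=\pi_1((G_1)_{vw})\leqslant\pi_1((G_1)_v)=H$, so if $E=A$ then $E\leqslant H$. Since $H$ is a proper subgroup of $B$ and $E$ is maximal in $B$, this forces $E=H$. But then $N_v$, which contains $N_e=R^k$ and has each projection contained in $H\cap T=E\cap T=R$, must equal $R^k=N_e$; hence $|V\Gamma|\geqslant|N:N_v|=|N:N_e|=|E\Gamma|$, contradicting the fact that a connected graph of valency at least three has more edges than vertices. Thus $|E:A|=2$, which in turn forces $|(G_1)_e:(G_1)_{vw}|=2$, and any $\sigma\in(G_1)_e\setminus(G_1)_{vw}$ gives $g=\pi_1(\sigma)\in E\setminus A$ as required for Proposition~\ref{prn:general}. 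No analysis of the $G_{vw}$-orbits on the simple direct factors is needed.
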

\begin{proof}
Since $G$ is a primitive group of type PA on $E\Gamma$, we have that
$G\leqslant B\Wr S_k$ and $G_e=G\cap (E\Wr S_k)$ where
$\pi_1((G_1)_e)=E$ is a maximal subgroup of $B$. Let
$A=\pi_1((G_1)_{vw})$ and $H=\pi_1((G_1)_v)$. Note
that $H\cap E=A$ and $H$ is a proper subgroup of $B$. Since $G$ is
arc-transitive, $|G_e:G_{vw}|=2$ and so $|(G_1)_e:(G_1)_{vw}|\leq 2$.
Thus $|E:A|\leq 2$. If $E=A$ then $E\leq H$. However, by the
maximality of $E$ this implies that $E=H$ and so $G_v$ is contained in
some $G$-conjugate of $G_e$. This contradicts the fact that there are
more edges than vertices and so $|E:A|=2$. For the same reason $A<H$.
Let $\sigma\in (G_1)_e\backslash (G_1)_{vw}$. Then
$g=\pi_1(\sigma)\in E\backslash A$ and $\Cos(G,H,HgH)$ is a
$G$-edge-primitive graph with edge stabiliser $E$ and vertex
stabiliser $H$.
\end{proof}

\section{Quasiprimitive examples}
\label{sec:qpeg}

In this section we construct examples of edge quasiprimitive graphs
where the types of actions do not occur in the edge primitive case.
\begin{example}
\label{eg:PASD}
(Quasiprimitive PA on edges and primitive SD on vertices)
{\rm Let $G=T\Wr S_2$ for some finite nonabelian simple group $T$ and let
  $H=\{(t,t)\mid t\in T\}\times \la \sigma\ra$, where $\sigma$
  interchanges the two simple direct factors of $N=T^2\norml G$. Let
  $x\in T$ be of order two and let $g=(1,x)\in G$. Then
  $$H^g=\{(t,t^x)\mid t\in T\} \times \la (x,x)\sigma\ra
=\{(t,t^x)\mid t\in T\}\rtimes \la \sigma\ra$$ 
and 
$H\cap H^g=\{(t,t)\mid t\in C_T(x)\}\times \la \sigma\ra$. Let
  $\Gamma=\Cos(G,H,HgH)$. Then $G$ is vertex-primitive of type 
 SD.  Let $e=\{H,Hg\}$, an edge of $\Gamma$. Then 
$G_e=\la H\cap H^g,g\ra$. Since $x\in C_T(x)$, it follows that 
$G_e=\{(x^it,x^jt)\mid t\in C_T(x); i,j\in\{0,1\}\}\rtimes \la \sigma\ra$ 
and so $G$ is quasiprimitive of type PA on edges.
}%
\end{example}

\begin{example}
\label{eg:PACD}
(Quasiprimitive PA on edges and primitive CD on vertices)
{\rm Let $\sigma=(1,2,3,4)$ and $G=T^4\rtimes \la \sigma\ra$ for some
finite nonabelian simple group $T$. Let 
$H=\{(t,s,t,s)\mid s,t\in T\}\rtimes \la \sigma\ra$ and $g=(x,x,1,1)$
where $x\in T$ has order two. Then $g^2 \in H$, $g\notin N_G(H)$ and
$\la H,g\ra=G$. Let $\Gamma=\Cos(G,H,HgH)$. Then $G$ is primitive of
type CD on vertices. Let $v$ be the vertex corresponding to $H$ and
$w$ the vertex corresponding to $Hg$. Then
$G_w=H^g=\{(t^x,s^x,t,s)\mid t,s\in T\}\rtimes \la\sigma\ra$ and so for
the edge $e=\{v,w\}$ we have 
$G_e=\{(tx^i,sx^j,tx^k,sx^l)\mid t,s\in C_T(x);i+j+k+l\equiv 0\pmod 2\}\rtimes
\la\sigma\ra$. Thus $G$ is quasiprimitive of type PA on edges. 
}%
\end{example}

We now give examples in the bipartite case.
\begin{example}
\label{eg:PAHS}
(Quasiprimitive PA on edges and $G^+$ primitive HS on each vertex orbit)
{\rm Let $T$ be a nonabelian simple group and let $x\in T$ have order 2.
Let $G=T\Wr S_2$ and $H=\{(t,t)\mid t\in T\}$. Let $g=(x,1)\sigma$
where $\sigma$ interchanges the two simple direct factors of
$N=T^2\norml G$. Then $g^2\in H$, $g\notin N_G(H)$ and 
$\la H,g\ra=G$.  Let $\Gamma=\Cos(G,H,HgH)$. Then $G^+=N$ has two
orbits on vertices. Let $v$ be the vertex
corresponding to $H$ and  $w$ be the vertex corresponding to $Hg$.
Then $G_v=H$ and $G_w=H^g=\{(t,t^x)\mid t\in T\}$. Thus $G^+$ is primitive
of type HS on each orbit. Moreover, $e=\{v,w\}$ is an edge and
$G_e=\{(t,t)\mid t\in C_T(x)\}\times \la g\ra$. Thus $G$ is
quasiprimitive of type PA on edges.
}%
\end{example}

\begin{example}
\label{eg:PAHC}
(Quasiprimitive PA on edges and $G^+$ primitive HC on each vertex orbit)
{\rm Let $\sigma=(1,2,3,4)$ and let $G=T\Wr \la \sigma\ra$ for some finite
nonabelian simple group $T$. Let
$H=\{(t,t,s,s)\mid t,s \in T\} \rtimes \la \sigma^2\ra$ and let $x\in T$ of
order 2. Let $g=(1,x,1,x)\sigma$. Then $g^2\in H$, $g\notin N_G(H)$
and $\la H,g\ra=G$. Let $\Gamma=\Cos(G,H,HgH)$, let $v$ be the vertex
corresponding to $H$ and $w$ the vertex corresponding to $Hg$. Then
$\Gamma$ is bipartite with $G^+=T^4\rtimes \la \sigma^2\ra$ and
$e=\{v,w\}$ is an edge. Moreover, $G^+$ is primitive of type HC on
each orbit. Now $G_v=H$ and 
$G_w=H^g=\{(t,t^x,s,s^x)\mid t,s \in T\} \rtimes \la \sigma^2\ra$. Thus
$G_e=\{(t,t,s,s)\mid t,s \in C_T(x)\}\rtimes \la g\ra$. Hence $G$ is
quasiprimitive of type PA on edges.
}%
\end{example}

\begin{example}
\label{eg:PAGplusSD}
(Quasiprimitive PA on edges and $G^+$ primitive SD on each vertex orbit)
{\rm Let $T$ be a nonabelian simple group with outer automorphism $\tau$ of
order two. Let $G=(T\times T)\rtimes \la (1,\tau),\sigma\ra$ where
$\sigma$ interchanges the two minimal normal subgroups of $N=T^2$. Let
$H=\{(t,t)\mid t\in \la T,\tau\ra\}\times \la \sigma\ra$ and let 
$g=(1,\tau)$. Then $g^2\in H$, $g\notin N_G(H)$ and $\la
H,g\ra=G$. Thus we can define the graph $\Gamma=\Cos(G,H,HgH)$. Let $v$
be the vertex corresponding to $H$ and $w$ the adjacent vertex
corresponding to $Hg$. Then $G_v=H$ and 
$G_w=H^g=\{(t,t^\tau)\mid t\in \la T,\tau\ra\}\rtimes \la
\sigma\ra$. Hence $G^+=T^2\rtimes \la (\tau,\tau),\sigma\ra$ acts
primitively of type SD on each orbit. Let $e=\{v,w\}$. Then 
$G_e=\{(t,t)\mid t\in C_T(\tau)\}\rtimes \la (1,\tau),\sigma\ra$ and
so $G$ is quasiprimitive of type PA on edges. 
}%
\end{example}

\begin{example}
\label{eg:PAGplusCD}
(Quasiprimitive PA on edges and $G^+$ primitive CD on each vertex
orbit)
{\rm Let $\sigma=(1,2,3,4)$, $T$ be a finite nonabelian simple group and
$\tau$ an outer automorphism of $T$ of order two. Let 
$G=T^4\rtimes \la (\tau,\tau,1,1),\sigma\ra$ and 
$H=\{(t,s,t,s)\mid t,s \in T\}\rtimes \la
(\tau,1,\tau,1),\sigma\ra$. Then letting $g=(\tau,\tau,1,1)$ we
see that $g^2\in H$, $g\notin N_G(H)$ and $\la H,g\ra=G$. Thus we can
define the graph $\Cos(G,H,HgH)$. Then 
$G^+=T^4\rtimes \la(\tau,1,\tau,1),\sigma\ra$ acts primitively
of type CD on each vertex orbit.  Let $v$ be the vertex corresponding
to $H$ and $w$ be the adjacent vertex corresponding to $Hg$. Then
$G_v=H$ and $G_w=H^g=\{(t^\tau,s^\tau,t,s)\mid t,s\in T\}\rtimes 
\la (\tau,1,\tau,1),\sigma\ra$. Thus 
$G_e=\{(t,s,t,s)\mid t,s\in C_T(\tau)\}\rtimes \la
(\tau,\tau,1,1),\sigma\ra$ and so $G$ acts quasiprimitively of type PA
on edge. 
}%
\end{example}

\begin{construction}
\label{con:TW}
(Quasiprimitive of type TW on edges and $G^+$ primitive of type PA on
  both orbits.)
{\rm Let $T$ be a finite nonabelian simple group  with maximal subgroup $R$
and suppose that there exists and outer automorphism $\tau$ or order
two such that $R\cap  R^{\tau}=1$. A suitable choice of $T$ and $R$ is
  $\PSL(2,29)$ and $A_5$ respectively.
Let 
$G=\la T^k,(\tau,\ldots,\tau)\ra \rtimes S_k$ and $H=R^k\rtimes S_k$.
Then if $g=(\tau,\ldots, \tau)$ we have $g^2\in H$, $g\notin N_G(H)$
  and $\la H,g\ra=G$. Hence $\Gamma=Cos(G,H,HgH)$ is a
  $G$-arc-transitive connected graph. Moreover, $\la g\ra\times S_k$
  is the stabiliser of an edge. Thus letting $N=\soc(G)= T^k$ we
  have that $N$ acts regularly on $E\Gamma$ and so $G$ is
  quasiprimitive of type TW on edges. Note that $G^{E\Gamma}$ is not
  primitive as an edge stabiliser is not maximal. Furthermore,
  $\Gamma$ is bipartite with $G^+=T^k\rtimes S_k$ acting primitively
  of type PA on both orbits.
}%
\end{construction}

\section{Edge-primitive groups with socle $\PSL(2,q)$}
\label{sec:PSL}

The following theorem of Dickson \cite{dickson} determines the maximal subgroups of
$\PSL(2,q)$. 

\begin{theorem}
\label{thm:maxPSL}
Let $p$ be a prime, $f$ a positive integer and $q=p^f$. Then the
conjugacy classes of maximal subgroups of $\PSL(2,q)$ are as follows:
\begin{enumerate} 
\item one class of subgroups isomorphic to 
      $[q]\rtimes C_{(q-1)/(2,q-1)}$,
\item one class of subgroups isomorphic to $D_{2(q-1)/(2,q-1)}$, if
  $q\notin\{5,7,9,11\}$,
\item one class of subgroups isomorphic to $D_{2(q+1)/(2,q-1)}$, if
  $q\notin \{7,9\}$,
\item two classes of subgroups isomorphic to $A_5$, if 
$q\equiv \pm 1 \pmod {10}$, and $\F_q=\F_p[\sqrt 5]$,
\item two classes of subgroups isomorphic to  $S_4$, if 
$q=p\equiv \pm 1 \pmod 8$, 
\item one class of subgroups isomorphic to $A_4$, if 
$q=p\equiv 3,5,13,27,37 \pmod {40}$.
\item two classes of subgroups isomorphic to $\PGL(2,p^{f/2})$ when
  $p$ odd,
\item one class of subgroups isomorphic to $\PSL(2,p^m)$ where $f/m$
  an odd prime or $p=2$ and $m\geq 2$.
\end{enumerate}
\end{theorem}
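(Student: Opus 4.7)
The plan is to first establish the full classification of all subgroups of $\PSL(2,q)$ (not just the maximal ones) and then sift out the maximal ones by checking inclusions. For the subgroup classification, I would follow Dickson's original approach: classify elements of $\PSL(2,q)$ into three types based on their action on $\PG(1,q)$ --- unipotent (order $p$), split semisimple (diagonalisable over $\F_q$), and non-split semisimple (eigenvalues in $\F_{q^2}\setminus\F_q$). Noting that a Sylow $p$-subgroup is elementary abelian of order $q$ and (for $q$ odd) a Sylow $2$-subgroup is dihedral, one splits the analysis according to whether $p$ divides $|H|$.

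If $p \mid |H|$, a standard argument (using that two distinct Sylow $p$-subgroups generate $\PSL(2,q)$) shows $H$ either fixes a point of $\PG(1,q)$ and lies in the Borel subgroup $[q]\rtimes C_{(q-1)/(2,q-1)}$, or $H$ contains $\PSL(2,p^m)$ for some $m \mid f$, or $H$ contains $\PGL(2,p^{m})$ with $2m\mid f$. If $p \nmid |H|$, then $|H|$ divides $(q-1)(q+1)/(2,q-1)$ and a classical counting argument on involutions (or on Sylow intersections in the dihedral Sylow $2$-subgroup) shows $H$ is cyclic, dihedral, or isomorphic to $A_4$, $S_4$, or $A_5$. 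The cyclic and dihedral subgroups are contained in the normalisers of split or non-split tori, giving items (2) and (3).

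Next I would test each candidate for maximality. The Borel subgroup (1) is a stabiliser in the $2$-transitive action on $q+1$ points, hence maximal. The torus normalisers (2) and (3) are the full normalisers of cyclic maximal tori of orders $(q\pm1)/(2,q-1)$; they fail to be maximal exactly when $q$ is small enough that the cyclic torus is absorbed into one of the exceptional $A_4,S_4,A_5$ subgroups, producing the small-$q$ exclusions. For the subfield subgroups, $\PSL(2,p^m)$ is maximal precisely when $f/m$ is prime (or $p=2$, $m\geq 2$), since any proper intermediate subfield would give a proper containment $\PSL(2,p^m)<\PSL(2,p^{m'})<\PSL(2,q)$; and $\PGL(2,p^{f/2})$ (when $p$ is odd and $f$ even) sits inside $\PSL(2,q)$ because $\PGL(2,p^{f/2})\cong\PSL(2,p^{f/2}).2$ embeds via the action on the $p^{f/2}+1$ points of a subline. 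Finally, $A_5, S_4, A_4$ appear only when $\F_q$ contains the requisite roots of unity: $A_5$ needs $\sqrt{5}\in\F_q$, hence $\F_q=\F_p[\sqrt 5]$; $S_4$ needs $\sqrt{2}\in\F_p$, giving $p\equiv\pm1\pmod 8$; and $A_4$ is maximal only when it is not properly contained in an $S_4$ or $A_5$, yielding the residue conditions modulo $40$. The doubled conjugacy classes in (4), (5), (7) arise from the two $\Aut(\PSL(2,q))$-orbits that fuse under the outer diagonal (or field) automorphism.

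The main obstacle is the bookkeeping for the exceptional small $q$ and for the fusion of conjugacy classes: one must verify in each case that the two $\PSL(2,q)$-classes of $A_5$ (resp.\ $S_4$, resp.\ $\PGL(2,p^{f/2})$) are genuinely distinct and become maximal, while checking that none of the listed subgroups is contained in another on the list. A helpful tool here is the Bruhat/coset counting: one computes $|G|/|H|$ for each candidate $H$ and confirms there is no intermediate subgroup of divisible order in the list. The conditions on $p$ modulo $8, 10, 40$ come out of checking when $\mathrm{SL}(2,q)$ admits a faithful representation of $2.A_4$, $2.S_4$, $2.A_5$, equivalently when $\F_q$ contains the appropriate primitive roots of unity; this is the most delicate part, but it is straightforward once the existence criteria for these binary groups inside $\SL(2,q)$ are pinned down via their character tables.
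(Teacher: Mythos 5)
The paper does not prove this statement at all: it is Dickson's classical classification and is quoted verbatim with a citation to \cite{dickson}, so there is no in-paper argument to measure your proposal against. Your outline is the standard route to the classical result --- split on whether $p$ divides $|H|$, exploit the elementary abelian Sylow $p$-subgroups and the fact that a subgroup containing two distinct ones is a subfield subgroup, classify the $p'$-subgroups as cyclic, dihedral, $A_4$, $S_4$ or $A_5$, and then sift for maximality --- and the maximality criteria you invoke (point stabiliser of a $2$-transitive action for the Borel, torus normalisers absorbed by exceptional subgroups for small $q$, the subfield lattice, square-root conditions for $A_5$ and $S_4$, residues modulo $40$ for $A_4$, fusion of the doubled classes under the diagonal automorphism) are all the correct ones.

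As a proof, however, this is a plan whose deferred steps are precisely the hard ones. The assertion that every $p'$-subgroup is cyclic, dihedral or one of $A_4,S_4,A_5$ is the bulk of Dickson's theorem; ``a classical counting argument on involutions'' does not discharge it, and for $p=2$ there is no dihedral Sylow $2$-subgroup to lean on. The dichotomy when $p$ divides $|H|$ needs the precise lemma that a subgroup containing a nontrivial unipotent and not fixing a point of $\PG(1,q)$ contains $\PSL(2,p^m)$ for a specific subfield $\F_{p^m}$; this is where the parameters $m$ come from and it requires genuine computation. The exact class counts in items (4), (5) and (7) require computing normalisers, not just observing that two classes fuse in $\PGL(2,q)$. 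There are also two small slips: for item (8) with $p$ odd you state maximality when $f/m$ is ``prime'', but $f/m=2$ must be excluded since $\PSL(2,p^{f/2})<\PGL(2,p^{f/2})<\PSL(2,q)$ (a containment you yourself record when discussing item (7)); and for items (5) and (6) the conditions force $q=p$ because otherwise $S_4$ or $A_4$ lies in a proper subfield subgroup, a point your square-root criterion alone does not capture. None of this is a wrong turn --- it is the correct skeleton --- but each gap would need filling before the argument stands alone, which is presumably why the paper simply cites Dickson rather than reproving him.
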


We also have the following theorem about maximal
subgroups of almost simple groups with socle $\PSL(2,q)$. 
\begin{theorem}\cite[Theorem 1.1]{maxsubs}
\label{thm:novelties}
Let $T=\PSL(2,q)\leqslant G\leqslant \PGammaL(2,q)$ and let $E$ be a
maximal subgroup of $G$ which does not contain $T$. Then either 
$E\cap T$ is maximal in $T$, or we have one of the following cases.
\begin{enumerate}
\item $G=\PGL(2,7)$ and $E=N_G(D_6)=D_{12}$.
\item $G=\PGL(2,7)$ and $E=N_G(D_{8})=D_{16}$.
\item $G=\PGL(2,9)$, $M_{10}$ or $\PGammaL(2,9)$ and $E=N_G(D_{10})$
\item $G=\PGL(2,9)$, $M_{10}$ or $\PGammaL(2,9)$ and $E=N_G(D_{8})$.
\item $G=\PGL(2,11)$ and $E=N_G(D_{10})=D_{20}$.
\item $G=\PGL(2,q)$, $q=p\equiv \pm 11, \pm 19\pmod{40}$ and $E=N_G(A_4)=S_4$.
\end{enumerate}
\end{theorem}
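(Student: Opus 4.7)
The plan is to use Dickson's classification (Theorem \ref{thm:maxPSL}) together with the standard correspondence between maximal subgroups of an almost simple group $G$ and $G$-invariant structure on maximal subgroups of its socle $T$. If $E$ is maximal in $G$ and $T\not\leqslant E$, then $E\cap T\triangleleft E$ and $E=N_G(E\cap T)$. Write $R=E\cap T$. Two things can happen: either $R$ itself is maximal in $T$ (which is the excluded case), or $R$ is properly contained in some maximal subgroup $M$ of $T$; in the latter case, maximality of $E$ in $G$ forces $N_G(M)\leqslant E$, which combined with $M\not\leqslant E$ forces $M\cap E=R$ and at least two $T$-conjugates $M_1,\ldots,M_r$ of $M$ to intersect in $R$, with $E$ permuting them transitively via outer automorphisms. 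So the task reduces to finding all non-maximal subgroups $R<T$ that arise as such common intersections and whose normaliser in some overgroup $G$ is a maximal subgroup.

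First I would go type by type through the list in Theorem \ref{thm:maxPSL}. For the Borel subgroup $[q]\rtimes C_{(q-1)/(2,q-1)}$ there is a unique conjugacy class and it is fixed by $\PGammaL(2,q)$, so no novelty arises. For the two dihedral families $D_{2(q\pm 1)/(2,q-1)}$ one must track how the diagonal outer automorphism (giving $\PGL$) and the field automorphisms act on conjugacy classes; generically each class is fixed, but in the small cases $q=7,9,11$ outer elements fuse distinct $T$-classes of involutions (or of cyclic subgroups) and the common intersection is a proper dihedral subgroup $D_m$ of both $M$ and $M^\phi$, whose normaliser is a larger dihedral group — these yield rows (1)--(5). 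For the subgroup classes $A_5$, $S_4$, $A_4$, Theorem \ref{thm:maxPSL} tells us exactly when there are two $T$-classes and when they exist: the outer diagonal automorphism interchanges the two $T$-classes of $A_5$ (when $q\equiv\pm 1\pmod{10}$) and of $S_4$ (when $q=p\equiv\pm 1\pmod 8$), and for $A_4$ in the residues $p\equiv \pm 11,\pm 19\pmod{40}$ there is a unique $T$-class whose normaliser in $\PGL(2,p)$ is $S_4$ (since $S_4$ is not contained in $T$ here), giving row (6). Finally for the subfield families $\PGL(2,p^{f/2})$ and $\PSL(2,p^m)$, field automorphisms permute the two $T$-classes transitively when they exist, but the intersection of fused classes is the smaller subfield group itself, which remains maximal; so no novelty appears.

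The second main step is, for each candidate $R$ produced above, to verify that $N_G(R)$ is genuinely maximal in $G$. This is done by examining the lattice of overgroups of $N_G(R)$ inside $G$: one uses the known list of maximal subgroups of $G$ and checks that $N_G(R)$ is not contained in any of them except itself. For the $A_4\to S_4$ case in $\PGL(2,p)$ one must in particular rule out inclusion in any of the $A_5$, dihedral, or Borel maximal subgroups of $\PGL(2,p)$ under the stated congruence conditions on $p$. For the small cases (1)--(5) it is a matter of direct inspection in the groups $\PGL(2,7)$, $\PGammaL(2,9)$ and $\PGL(2,11)$ using the Atlas.

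The main obstacle will be the bookkeeping for the small exceptional cases in (1)--(5): here one must work concretely inside the relevant small groups to see that the $D_6$, $D_8$, $D_{10}$ and $D_{16}$ normalisers really are maximal, because several near-misses occur (for instance one has to distinguish $D_{16}$ in $\PGL(2,7)$ from the Borel subgroup of order $42$, and the various dihedral subgroups of $\PGammaL(2,9)$ from the $M_{10}$-novelty $A_6{\cdot}2$ subgroups). A secondary subtlety is the interaction between the diagonal and field automorphisms for $q=p^f$ with $f>1$, where one must verify that the only novelties produced occur for $q=9$; this uses the fact that field automorphisms fix each $T$-class of $A_5$, $S_4$, $A_4$ setwise (their orders are coprime to $f$ in the relevant ranges), leaving only the diagonal automorphism to produce fusion.
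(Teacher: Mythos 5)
The paper does not prove this theorem: it is imported wholesale as \cite[Theorem 1.1]{maxsubs}, so there is no in-paper proof to compare against. Your overall strategy --- run through Dickson's list, identify the non-maximal subgroups $R=E\cap T$ arising as common intersections of several maximal subgroups of $T$ permuted by the outer part of $E$, then check maximality of $N_G(R)$ in $G$ --- is the standard and correct route, and it is essentially what the cited reference does. So the plan is sound, but it is a plan: the decisive content (which intersections $M_1\cap M_2$ actually occur, and which normalisers survive the maximality check) is deferred to ``direct inspection'' and the Atlas, which is where all the work lies.

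One intermediate step as written is false and would need repair. You claim that maximality of $E$ forces $N_G(M)\leqslant E$ for a maximal overgroup $M$ of $R$ in $T$. That cannot hold: since $M$ is maximal in the simple group $T$ we have $N_G(M)\cap T=M$, so $N_G(M)\leqslant E$ would give $M\leqslant E\cap T=R<M$. The correct deduction runs the other way: if $E$ normalised some maximal overgroup $M$ of $R$, then $E\leqslant N_G(M)<G$, whence $E=N_G(M)$ by maximality and $E\cap T=M>R$, a contradiction; therefore $E$ normalises no such $M$, so $R$ lies in at least two maximal subgroups of $T$ forming an $E$-orbit of size at least two, which is the conclusion you then (correctly) use. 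With that step fixed, and the case-by-case verifications actually carried out (including the subtlety you rightly flag about field automorphisms for $q=p^f$, $f>1$, and the small-group checks for $q=7,9,11$), the argument would go through.
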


The following lemma shows that except for the 8 exceptions in Theorem
\ref{thm:novelties}, we can restrict our attention to searching for
$G$-arc-transitive $G$-edge primitive graphs with $G=\PSL(2,q)$.
\begin{lemma}
Let $\Gamma$ be a nontrivial $G$-edge-primitive connected graph with
$T=\PSL(2,q)\norml G\leqslant \PGammaL(2,q)$. Let $E$ be the
stabiliser in $G$ of an edge of $\Gamma$. If $E\cap T$ is maximal in
$T$ then $T$ is arc-transitive and edge-primitive.
\end{lemma}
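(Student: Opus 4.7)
The plan is to prove $T$-edge-primitivity first and then deduce $T$-arc-transitivity from Lemma \ref{lem:arctrans}. The whole proof is short and largely formal, resting on the fact that $T=\PSL(2,q)$ is nonabelian simple for $q\neq 2,3$ (which is implicit in the context of this section).

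First I would verify that $T$ acts transitively on $E\Gamma$. Let $K$ be the kernel of the action of $G$ on $E\Gamma$, so that $G^{E\Gamma}=G/K$ is a faithful primitive permutation group on $E\Gamma$. The key point is that $T\not\leqslant K$: if $T\leqslant K$, then every $t\in T$ fixes each edge $\{v,w\}$ setwise, and the action of $T$ on $\{v,w\}$ yields a homomorphism $T\to\Sym(\{v,w\})\cong C_2$. Since $T=\PSL(2,q)$ is simple nonabelian, every such homomorphism is trivial, so $T$ fixes each vertex of $\Gamma$, contradicting the faithful action $G\leqslant\Aut(\Gamma)$ (note that $E\Gamma\neq\emptyset$ since $\Gamma$ is nontrivial). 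Consequently $TK/K$ is a nontrivial normal subgroup of the primitive group $G^{E\Gamma}$, hence transitive, so $T$ itself is transitive on $E\Gamma$.

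Second, the stabiliser in $T$ of the chosen edge is $T\cap G_e=T\cap E$, which is maximal in $T$ by hypothesis. Combined with $T$-edge-transitivity, this immediately gives that $\Gamma$ is $T$-edge-primitive.

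Third, I would apply Lemma \ref{lem:arctrans} to the connected $T$-edge-primitive graph $\Gamma$. The trichotomy there yields either (i) $\Gamma$ is a star, (ii) $\Gamma$ is a cycle of prime length with $T$ cyclic, or (iii) $T$ is arc-transitive. Cases (i) and (ii) are excluded because $\Gamma$ is nontrivial (and because $T$ is nonabelian simple, not cyclic), so (iii) holds. No step presents a real obstacle: the only substantive content is the brief argument, using simplicity of $T$, that $T$ acts nontrivially on the edge set.
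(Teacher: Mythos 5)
Your proposal is correct and follows essentially the same route as the paper: use edge-primitivity of $G$ plus normality of $T$ to get $T$ edge-transitive with maximal edge stabiliser $E\cap T$, then invoke Lemma \ref{lem:arctrans} and rule out the star and cycle cases. You merely spell out details the paper leaves implicit (why $T$ acts nontrivially on $E\Gamma$, and why $T$ cannot be the cyclic group of case (2)).
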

\begin{proof}
Since $G$ is edge-primitive and $T\norml G$ it follows that $T$ acts
transitively on the set of edges with edge stabiliser $E\cap T$. Hence
$T$ is edge-primitive.  Since $\Gamma$ is $G$-arc-transitive, $\Gamma$
is not a star and so by Lemma \ref{lem:arctrans}, $\Gamma$ is also
$T$-arc-transitive.
\end{proof}

We have the following proposition.
\begin{proposition}
\label{prn:containment}
Let $G=\PSL(2,q)$, where $q=p^f$, $E$ be a maximal subgroup of $G$ and
$H$ be a subgroup of $G$ such that $A=H\cap E$ is an index two
subgroup of $E$ and a proper subgroup of $H$. Suppose $G$ is not
2-transitive on the set of cosets of $H$. Then one of the following
holds. 
\begin{enumerate}
\item $q=p\equiv \pm 1,\pm9\pmod{40}$, $E=S_4$, $H=A_5$ and $A=A_4$.
\item $q=17$, $E=D_{16}$, $H=S_4$ and $A=D_{8}$.
\item $q=19$, $E=D_{20}$, $H=A_5$ and $A=D_{10}$.
\item $q=25$, $E=D_{24}$, $H=\PGL(2,5)$ and $A=D_{12}$.
\end{enumerate}
In the first case, given $E$ there are two choices for $H$ and
these are conjugate in $T$. In the last three cases, given $E$ there
are four choices for $H$ and these come in conjugate pairs.
\end{proposition}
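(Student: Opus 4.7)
The plan is a direct case analysis based on Dickson's classification (Theorem \ref{thm:maxPSL}). The key preliminary observation is that since $E$ is maximal in $G$ and $A$ has index~$2$ in $E$, any proper subgroup $H$ of $G$ with $A < H$ satisfies either $H\cap E=A$ or $H\cap E=E$; the latter forces $E\leqslant H$ and hence $H=G$, contradicting properness. So $H\cap E=A$ is automatic once $A<H<G$ and $H\ne E$, and the task reduces to enumerating pairs $(E,H)$. Scanning Theorem \ref{thm:maxPSL}, the maximal subgroups of $G$ that possess an index-$2$ subgroup are exactly: the Borel (when $(q-1)/(2,q-1)$ is even); the dihedrals $D_{2n}$ with $n=(q\pm1)/(2,q-1)$ (with $A$ either $C_n$ or, when $n$ is even, one of two copies of $D_n$); the almost simple $S_4$ (with $A=A_4$); and the subfield subgroup $\PGL(2,p^{f/2})$ (with $A=\PSL(2,p^{f/2})$), since $A_4$, $A_5$ and $\PSL(2,p^m)$ for $p^m>3$ admit no index-$2$ subgroup.

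The Borel and subfield cases yield no example. In the Borel case, $A$ contains the Sylow $p$-subgroup $[q]$, and Theorem \ref{thm:maxPSL} shows the only maximal subgroup of $G$ containing a full Sylow $p$-subgroup is the Borel itself, forcing $H\leqslant E$. In the subfield case, the only subgroup strictly between $\PSL(2,p^{f/2})$ and $G$ is $\PGL(2,p^{f/2})=E$. When $E=S_4$, Theorem \ref{thm:maxPSL} implies the only maximal subgroups of $G$ containing a copy of $A_4$ are conjugates of $S_4$ and (when present) $A_5$; since $H\ne E$ we must take $H=A_5$. This requires $A_5\leqslant G$, i.e.\ $p\equiv\pm1\pmod{10}$, together with $p\equiv\pm1\pmod 8$ for $S_4\leqslant G$, giving $p\equiv\pm1,\pm9\pmod{40}$. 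The action on cosets of $A_5$ is $2$-transitive only when $q\in\{9,11\}$, excluded here, so this yields case~(1), with the two $T$-classes of $A_5$ producing the two $T$-conjugate choices of $H$.

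The dihedral case $E=D_{2n}$ is the core of the argument. For $A=C_n$, the only overgroups of $C_n$ are $E$, the Borel (for $n=(q-1)/(2,q-1)$), or one of $A_4,S_4,A_5$ (requiring $n\le5$), or a subfield subgroup (constraining $n$ severely); the Borel gives the natural $2$-transitive action and is excluded, and the remaining possibilities produce only $q\in\{5,7,9,11\}$, where $E$ fails to be maximal. For $A=D_n$ with $n$ even, one lists the $n$ for which $D_n$ embeds into each possible overgroup: $D_n\leqslant A_4$ only for $n=4$; $D_n\leqslant S_4$ for $n\in\{4,6,8\}$; $D_n\leqslant A_5$ for $n\in\{4,6,10\}$; and $D_n\leqslant \PGL(2,q')$ only for a short list of small $q'$. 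Matching against $n=(q\pm1)/(2,q-1)$ together with the existence conditions for $S_4,A_5,\PGL(2,q')$ inside $\PSL(2,q)$ and the maximality hypotheses on $E$, the only new triples that survive are $(q,n,H)=(17,8,S_4)$, $(19,10,A_5)$ and $(25,12,\PGL(2,5))$, namely cases~(2), (3), (4). In each of these the action of $G$ on cosets of $H$ is readily checked not to be $2$-transitive.

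The multiplicity statement follows by counting: in case~(1) both $T$-classes of $A_5$ in $G$ contain the fixed $A_4$, giving the two $T$-conjugate $H$'s; in cases~(2)--(4), $E$ contains two non-conjugate copies of $D_n$ and each is contained in two $T$-conjugate copies of $H$, for a total of four choices in conjugate pairs. The main obstacle is the exhaustiveness of the dihedral analysis: one must be careful to rule out every small-$q$ coincidence (for instance $q\in\{13,23,27,29,31\}$, where a priori one might expect a sporadic containment), using the combined constraints from Theorem \ref{thm:maxPSL} on when $E$ is maximal, when the candidate $H$ is actually a subgroup of $\PSL(2,q)$, and on the arithmetic of $n=(q\pm 1)/(2,q-1)$.
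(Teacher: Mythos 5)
Your proposal follows essentially the same route as the paper: run through Dickson's list (Theorem \ref{thm:maxPSL}), discard the maximal subgroups with no index-two subgroup, and for each remaining pair $(E,A)$ enumerate the overgroups of $A$, with the dihedral case carrying the bulk of the work. The case division, the surviving triples, and the multiplicity count all agree with the paper's proof.

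Two of your dismissals, however, are stated incorrectly and need the $2$-transitivity hypothesis to repair them (which is exactly how the paper repairs them). First, in the subfield case your claim that the only subgroup strictly between $\PSL(2,p^{f/2})$ and $G$ is $\PGL(2,p^{f/2})$ fails for $q=9$: there $A=\PSL(2,3)\cong A_4$ sits inside $A_5<A_6=\PSL(2,9)$, so the candidate $H\cong A_5$ genuinely arises and must be killed by observing that the action on $[G:A_5]$ has degree $6$ and is $2$-transitive. Second, in the cyclic-torus subcase your assertion that the small values force $q\in\{5,7,9,11\}$ ``where $E$ fails to be maximal'' is wrong for the nonsplit torus at $q=5$ (and for $q=4$): there $E=D_6$ \emph{is} maximal and $C_3<A_4$ gives a candidate $H=A_4$, which again is excluded only because the degree-$5$ coset action is $2$-transitive, not because $E$ fails to be maximal. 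Neither slip affects the final list, but as written those two steps assert false statements rather than invoking the hypothesis that actually eliminates the configurations.
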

\begin{proof}
We work our way through the list of maximal subgroups of $G$ given in
Theorem \ref{thm:maxPSL}.  We note first that $E$ cannot be $A_5$,
$A_4$, $\PSL(2,p^m)$ for $p^m\neq 2$, or $[q]\rtimes C_{q-1}$ for $q$
even, as these groups do not have an
index 2 subgroup. Furthermore, $E\neq [q]\rtimes C_{(q-1)/2}$ for $q$
odd as the only possible index 2 subgroup is $[q]\rtimes C_{(q-1)/4}$
which is only contained in $E$.

Suppose next that $E=D_{2(q-1)/(2,q-1)}$ and note that $q\notin \{5,7,9,11\}$.
Then $A= C_{(q-1)/(2,q-1)}$, the 
stabiliser of two points of the projective line, is an index two
subgroup of $E$. The only possibility for $H$ is a subgroup isomorphic
to $[q]\rtimes  C_{(q-1)/(2,q-1)}$, but in this case the action of $G$
is 2-transitive.
If $(q-1)/(2,q-1)$ is even then $E$ also contains two subgroups
isomorphic to $D_{(q-1)/2}$ which are conjugate in $\PGL(2,q)$ but not
$\PSL(2,q)$. The restrictions on $q$ imply that
$(q-1)/(2,q-1)\geq 6$ and so if $A\cong D_{(q-1)/2}$ then $A$ is
not contained in a $D_{q+1}$. Furthermore, $A$ is not contained in an
$A_4$.  If $A$ is contained in an $A_5$ then $(q-1)/2=6$ or 10. The
first implies that $q=13$, but $\PSL(2,13)$ does not contain an $A_5$
while the second implies that $q=21$, a contradiction. Thus $A$ is not
contained in an $A_5$. If $A$ is contained in an $S_4$ then
$(q-1)/2=6$ or $8$. Again the first is not possible as $\PSL(2,13)$
does not contain an $S_4$ and so $q=17$.  Since $D_8$ is maximal in
$S_4$, it follows that in this case we have $H\cong S_4$. Counting
again shows that given $A$ there are two choices for $H$ and these are
conjugate in $T$. The two nonconjugate choices for $A$ give us two
nonconjugate pairs of choices for $H$. Thus we are in case (2). If
  $A\leqslant \PGL(2,p^{f/2})$ then $(q-1)/2$ divides either
  $2(p^{f/2}-1)$ or $2(p^{f/2}+1)$. Since $q-1=(p^{f/2}-1)(p^{f/2}+1)$
either $p^{f/2}-1$ or $p^{f/2}+1$ divides 4. Thus $p^{f/2}=3$ or
5. Since $q\neq 9$ this give us 
$D_{12}\leqslant \PGL(2,5)\leqslant \PSL(2,25)$. Counting again gives
that there are two choices for $H$ and these are conjugate in
$T$. Again the two nonconjugate choices for $A$ give nonconjugate
pairs of choices for $H$ and we have case (4). 
If $A\leqslant \PSL(2,p^{f/r})$ for $r\geq 3$ then $(q-1)/2$ divides
either $p^{f/r}-1$ or $p^{f/r}+1$. Since $r\geq 3$ we have
$p^f-1>2(p^{f/r}\pm1)$ and so this is not possible. 

Next let $E=D_{2(q+1)/(2,q-1)}$ with
$q\notin\{7,9\}$. One choice for $A$ is $C_{(q+1)/(2,q-1)}$. If $q=5$
then $A=C_3$ and so $H\cong A_4$. However, in this case $G$ is 2-transitive on the cosets of $H$. Thus $(q+1)/(2,q-1) \geq 6$ and so there is no
possibility for $H$. 
If $(q+1)/2$ is even then  $A$ can also be 
one of the two choices of $D_{(q+1)/(2,q-1)}$ which are conjugate in
$\PGL(2,q)$ but not $\PSL(2,q)$. Note then that 
$q\geq 11$ and so $(q+1)/(2,q-1)\geq 6$.  Thus $A$ is not contained
  in a $A_4$.
Since $(q+1)/2\geq 6$ does not divide $q-1$ it
  follows that $A$ is not contained in $D_{q-1}$. Now 
$A\leqslant A_5$, if and only if $(q+1)/2=10$ or $6$. For $A_5$ to be
  a subgroup of $G$ we require that $q=11$ or 19. We do not have the first case as this yields a 2-transitive group. There are then two choices for
  $H$ and these are conjugate in $T$. Moreover the 
  two nonconjugate choices for $A$ gives nonconjugate pairs of choices
  for $H$ and we have case (3). To have 
$A\leqslant S_4$ we require $(q+1)/2=8$ or $6$.  The first is not
  possible while the second has $q=11$ in which case there is no
  $S_4$. To have $A\leqslant \PGL(2,p^{f/2})$ we require
  that $(q+1)/2$ divides either $2(p^{f/2}-1)$ or
  $2(p^{f/2}+1)$. Hence $p^f+1$ divides either $4(p^{f/2}-1)$ or
  $4(p^{f/2}+1)$ and so
$p^{f/2}-1\leq 4$. This implies that $p^{f/2}=3$ or 5. However, we
  then have $q=9$ or 25, and in both cases $(q+1)/2$ is odd. Hence $A$
  is not contained in $\PGL(2,p^{f/2})$. For $A\leqslant \PSL(2,p^m)$,
  for some $m<f/2$, we need $(q+1)/2$ to divides either $p^m-1$ or
  $p^m+1$. Neither of these are possible and so $A$ and $H$ are one of
  the groups listed.  

Suppose next that $E=S_4$ and $q=p\equiv \pm 1\pmod 8$. Then
$A=A_4$. Since $q=p$, the only other subgroup of $G$ containing $A$ is
$H\cong A_5$ when $q\equiv \pm 1\pmod {10}$. Since each $A_5$ contains
5 copies of $A_4$ and the normaliser in $G$ of $A_4$ is $S_4$ it
follows that there are two choices for $H$. This gives case (1).

Finally, if $E=\PGL(2,p^{f/2})$ with $p$ odd then $A=\PSL(2,p^{f/2})$. The only
way that $A$ can be contained in another maximal subgroup of $G$ is if
$A$ is soluble. Hence $q=9$, $A=\PSL(2,3)\cong A_4$. Looking at the
maximal subgroups of $G$ it follows that $H\cong A_5$.  However, in
this case $G$ is 2-transitive on the cosets of $H$, a contradiction.
\end{proof}

We also need the following proposition concerning the exceptional
cases in Theorem \ref{thm:novelties}.

\begin{proposition}
\label{prn:exceptions}
Let $T=\PSL(2,q)\norml G\leqslant \PGL(2,q)$ and suppose that $E$ is a
maximal subgroup of $G$ not containing $T=\PSL(2,q)$ such that 
$E\cap T$ is not maximal in $T$. Suppose that $G$ has a subgroup $H$
such that $A=H\cap E$ is a proper subgroup of $H$ and has index two in
$E$, and that $G$ is not 2-transitive on the set of cosets of
$H$. Then one of the following holds. 
\begin{enumerate} 
\item $G=\PGL(2,7)$, $E=D_{12}$, $H=S_4$ and $A=E\cap T=D_6$.
\item $G=\PGL(2,7)$, $E=D_{16}$, $H=S_4$ and $A=E\cap T=D_8$. 
\item $G=\PGL(2,9)$, $M_{10}$, or $\PGammaL(2,9)$, $E=N_G(D_8)$,
$H=N_G(\PGL(2,3))$ and $A=E\cap\PSigmaL(2,9)$. 
\item $G=\PGL(2,9)$, $M_{10}$, $\PGammaL(2,9)$, $E=N_G(D_{10})$,
$H=N_G(A_5)$ and $A=E\cap \PSigmaL(2,9)$. 
\item $G=\PGL(2,11)$, $E=D_{20}$, $H=C_{11}\rtimes C_{10}$ and
$A=C_{10}$. 
\item $G=\PGL(2,11)$, $E=D_{20}$, $H=A_5$ and $A=E\cap T=D_{10}$. 
\item $G=\PGL(2,q)$, $q=p\equiv \pm 11,\pm 19\pmod{40}$, $E=S_4$,
  $H=A_5$ and $A=A_4$.
\end{enumerate}
In each case there are two conjugate choices for $H$.
\end{proposition}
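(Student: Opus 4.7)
The plan is to run a systematic case analysis mirroring the proof of Proposition \ref{prn:containment}, but starting from the six novelty configurations of $(G,E)$ listed in Theorem \ref{thm:novelties}. For each configuration I would enumerate (i) the index-$2$ subgroups $A$ of $E$, (ii) the proper overgroups $H$ of $A$ in $G$ with $H\cap E=A$, and (iii) those among these for which $G$ does not act $2$-transitively on $[G:H]$.

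First I would list the candidates for $A$. In cases 1--5 of Theorem \ref{thm:novelties} the group $E$ is dihedral of order $2n$, so $A$ is either the cyclic subgroup $C_n$ or, when $n$ is even, one of the two dihedral subgroups of order $n$; in case 6, $E=S_4$ gives the single candidate $A=A_4$. For each admissible pair $(E,A)$ I would then enumerate the subgroups $H<G$ with $A\lneq H$ and $H\cap E=A$. For $q\in\{7,9,11\}$ the full subgroup lattice of $G$ is available from the ATLAS \cite{atlas}, so this reduces to a finite check; for the infinite family ($G=\PGL(2,p)$ with $p\equiv\pm 11,\pm 19\pmod{40}$) I would invoke Dickson's theorem (Theorem \ref{thm:maxPSL}). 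Next I would discard those $H$ yielding a $2$-transitive $G$-action on $[G:H]$: the $2$-transitive actions of the relevant $G$'s are all classical (the projective line of $q+1$ points, plus a handful of exceptional actions for $q=9,11$) and can be read off by index alone. Finally, the assertion that there are exactly two conjugate choices for $H$ follows from an orbit count: $N_G(A)$ acts by conjugation on the set of admissible overgroups, and in each case the resulting orbit has size $2$.

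The only step beyond bookkeeping is the infinite-family case, where I would need to show that the proper overgroups of $A_4$ in $\PGL(2,p)$ are exactly $S_4=N_{\PGL(2,p)}(A_4)$ and the two $\PSL(2,p)$-conjugate $A_5$'s through $A_4$. By Dickson's theorem the only overgroups of $A_4$ inside $\PSL(2,p)$ are the two $A_5$'s, since the congruence $p\equiv\pm 3\pmod 8$ forces $S_4\not<\PSL(2,p)$ while $p\equiv\pm 1\pmod{10}$ secures $A_5<\PSL(2,p)$. Any overgroup of $A_4$ in $\PGL(2,p)$ not contained in $\PSL(2,p)$ meets $\PSL(2,p)$ in an index-$2$ subgroup containing $A_4$; as $A_5$ is simple, this forces such an overgroup to be $S_4$ itself. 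Since the $N_G(A_4)=S_4$-orbit on the $A_5$'s through $A_4$ has size two, I obtain case (7) of the proposition. All remaining cases reduce to direct finite-group verifications from the ATLAS, and the main obstacle throughout is purely the bookkeeping of guaranteeing that no overgroup is missed and that no unintended $2$-transitive action survives.
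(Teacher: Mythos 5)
Your proposal follows essentially the same route as the paper: the paper's proof simply reads off the finite cases ($q=7,9,11$) from the ATLAS subgroup data and handles the infinite family $\PGL(2,p)$, $p\equiv\pm 11,\pm 19\pmod{40}$ by noting $A=A_4$ and that the two $A_5$'s through it are the only admissible overgroups, which is exactly your outline filled in with more detail. One minor caution: in cases (3) and (4) the groups $E=N_G(D_8)$ and $E=N_G(D_{10})$ are dihedral only for $G=\PGL(2,9)$, not for $M_{10}$ or $\PGammaL(2,9)$, but since you defer those cases to an ATLAS check anyway this does not affect the argument.
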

\begin{proof}
Note that $G$ and $E$ are given by Theorem \ref{thm:novelties}.  The
first 6 cases can all be dealt with by looking at the list of maximal
subgroups in \cite{atlas}. If $G=\PGL(2,q)$ for 
$q=p\equiv 11,19,21,29\pmod{40}$ and $E=S_4$ then the only possibility
for $A$ is $A_4$.  There are then two choices for $H$ being $A_5$ and
these are the only possibilities.
\end{proof}

We can now determine all $G$-edge-primitive graphs with $\soc(G)=\PSL(2,q)$.

\begin{proof}{\bf (of Theorem \ref{thm:PSL})}
Let $\Gamma$ be a $G$-edge-primitive graph such that
$T=\soc(G)=\PSL(2,q)$ with $q>3$. Then by Proposition \ref{prn:general}
there exists a maximal subgroup $E$ of $G$ with an index 2 subgroup
$A$ also contained in a proper corefree subgroup $H$ of $G$ such that
$\Gamma\cong \Cos(G,H,HgH)$ for some $g\in E\backslash A$. 
If $G$ is 2-transitive on the set of cosets of $H$ then $\Gamma$ is a
complete graph and $G$ is primitive on 2-subsets. By Theorem
\ref{thm:prim2subsets}, $G$ appears in Table
\ref{tab:2subsetprim}. Thus we can assume that $G$ is not 2-transitive
on vertices. Then by
Proposition \ref{prn:exceptions} either $\Gamma$ is 
$T$-edge-primitive with $E\cap T$, $A\cap T$ and $H\cap T$
given by Proposition \ref{prn:containment}, or $G,E, A$ and $H$ are given
by Proposition \ref{prn:exceptions}.

Next let $q=p\equiv \pm 1,\pm9\pmod{40}$, $E\cap T=S_4$, $A\cap T=A_4$
and $H\cap T=A_5$.  Since there are two conjugacy classes of $A_5$
subgroups in $\PSL(2,q)$ and these are fused in $\PGL(2,q)$ it follows
that $\PGL(2,q)$ is not an automorphism group of this graph and so we
have row 9 of Table \ref{tab:psl}.

The remaining cases from Proposition \ref{prn:containment} are 
\begin{enumerate}
\item $q=17$, $E\cap T=D_{16}$, $A\cap T=D_{8}$ and $H\cap T=S_4$
\item $q=19$, $E\cap T=D_{20}$, $A\cap T=D_{10}$ and $H\cap T=A_5$ 
\item $q=25$, $E\cap T=D_{24}$, $A\cap T=D_{12}$ and $H\cap T=\PGL(2,5)$. 
\end{enumerate}
In all cases there are two $T$-conjugacy classes of subgroups 
$H\cap T$, and these are fused in $\PGL(2,q)$. Hence we get isomorphic
graphs. Also the only possibilities for $G$ are then
$\PSL(2,17),\PSL(2,19),\PSL(2,25)$ and $\PSigmaL(2,25)$. These give us
rows 6--8 of Table \ref{tab:psl}.

It remains to deal with the groups left from Proposition
\ref{prn:exceptions}.

If $G=\PGL(2,7)$, $E=D_{12}$, $A=D_6$ and $H=S_4$ then since
$H\leqslant \PSL(2,7)$ it follows that $\Gamma$ is bipartite. Note
that $G\cong \Aut(\PSL(3,2))$, $H$ is the stabiliser in $\PSL(3,2)$ of a
1-space $U$  and $A$ is the stabiliser in $H$ of a 2-space which is
complementary to $U$. Thus we have row 2.

Next let $G=\PGL(2,7)$, $E=D_{16}$, $A=D_8$ and $H=S_4$. Again we have
that $\Gamma$ is bipartite, and $H$ is the stabiliser  in $\PSL(3,2)$ of a
1-space $U$. However, this time $A$ is the stabiliser in $H$ of a
2-space containing $U$ and so $\Gamma$ is the Heawood graph, so we
have row 1.

Now let $G=\PGL(2,9)$, $M_{10}$, or $\PGammaL(2,9)$, $E=N_G(D_8)$,
$A=E\cap\PSigmaL(2,9)$ and $H=N_G(\PGL(2,3))$. Note that
$\PGammaL(2,9)\cong\la \PSp(4,2),\tau\ra$ where $\tau$ is a duality of
the associated polar space. Moreover, $H$ is the stabiliser of a
totally isotropic 1-space and $A$ is the stabiliser in $H$ of an
incident totally isotropic 2-space. Thus $\Gamma$ is the Tutte--Coxeter graph
and we have row 4.

When $G=\PGL(2,9)$, $M_{10}$ or $\PGammaL(2,9)$, $E=N_G(D_{10})$,
  $A=E\cap \PSigmaL(2,9)$ and $H=N_G(A_5)$, we have that $H\leqslant
  (G\cap \PSigmaL(2,9))$ and $G\cap \PSigmaL(2,9)$ is an index two
  subgroup of $G$. Thus $\Gamma$ is bipartite. The vertices of
  $\Gamma$ are two sets of size 6 with $\PSL(2,9)\cong A_6$ acting on
  each with two different actions. Since the stabiliser in $A_6$ of a
  point in one action is still transitive in the other action it
  follows that $\Gamma\cong K_{6,6}$ and we have row 3 of Table
  \ref{tab:psl}.

When $G=\PGL(2,11)$, $E=D_{20}$, $A=E\cap T=D_{10}$ and $H=A_5$ we
have that $H\leqslant \PSL(2,11)$ and so we get a bipartite graph on
22 vertices with valency 6. Thus we have row 5.

Finally, let $G=\PGL(2,q)$, $q=p\equiv 11,19,21,29\pmod{40}$, $E=S_4$,
$A=A_4$ and $H=A_5$. Then we get the bipartite graph in row 10 of Table
\ref{tab:psl}. 
\end{proof}

\end{document}